\title[Root systems, symmetries and linear representations of Artin groups]{Root systems, symmetries and linear representations\\ of Artin groups}
\author[O Geneste]{Olivier Geneste}
\address{IMB, UMR 5584, CNRS, Université Bourgogne Franche-Comté, 21000 Dijon, France}
\email{o.geneste@gmail.com}
\urladdr{}
\author[J-Y Hée]{Jean-Yves Hée}
\address{LAMFA, UMR 7352, CNRS, Université de Picardie Jules Verne, 80039 Amiens, France}
\email{jean-yves.hee@u-picardie.fr}
\urladdr{}
\author[L Paris]{Luis Paris}
\address{IMB, UMR 5584, CNRS, Université Bourgogne Franche-Comté, 21000 Dijon, France}
\email{lparis@u-bourgogne.fr}
\urladdr{}
\newtheorem{thm}{Theorem}[section]
\newtheorem{lem}[thm]{Lemma}
\newtheorem{prop}[thm]{Proposition}
\newtheorem{corl}[thm]{Corollary}
\theoremstyle{definition}
\newtheorem*{expl}{Example}
\numberwithin{equation}{section}
\renewcommand{\thefigure}{\ifnum \c@section>\z@ \thesection.\fi
 \@arabic\c@figure}
\begin{document}

\def\BB{\mathcal B} \def\GL{{\rm GL}} \def\K{\mathbb K}
\def\Q{\mathbb Q} \def\N{\mathbb N} \def\B{\mathbb B}
\def\RR{\mathcal R} \def\R{\mathbb R} \def\OO{\mathcal O}
\def\SS{\mathcal S} \def\Sym{{\rm Sym}} \def\Stab{{\rm Stab}}
\def\Supp{{\rm Supp}} \def\id{{\rm id}} \def\fin{{\rm fin}}
\def\SSS{\mathfrak S}


\begin{abstract}
Let $\Gamma$ be a Coxeter graph, let $W$ be its associated Coxeter group, and let $G$ be a group of symmetries of $\Gamma$.
Recall that, by a theorem of Hée and M\"uhlherr, $W^G$ is a Coxeter group associated to some Coxeter graph $\hat \Gamma$.
We denote by $\Phi^+$ the set of positive roots of $\Gamma$ and by $\hat \Phi^+$ the set of positive roots of $\hat \Gamma$.
Let $E$ be a vector space over a field $\K$ having a basis in one-to-one correspondence with $\Phi^+$.
The action of $G$ on $\Gamma$ induces an action of $G$ on $\Phi^+$, and therefore on $E$.
We show that $E^G$ contains a linearly independent family of vectors naturally in one-to-one correspondence with $\hat \Phi^+$ and we determine exactly when this family is a basis of $E^G$.
This question is motivated by the construction of Krammer's style linear representations for non simply laced Artin groups. 
\end{abstract}

\maketitle


\section{Introduction}

\subsection{Motivation}\label{subsec1_1}

Bigelow \cite{Bige1} and Krammer \cite{Kramm1} proved that the braid groups are linear answering a historical question in the subject. 
More precisely, they proved that some linear representation $\psi : \BB_n \to \GL (E)$ of the braid group $\BB_n$ previously introduced by Lawrence \cite{Lawre1} is faithful.
A useful information for us is that $E$ is a vector space over the field $\K = \Q(q,z)$ of rational functions in two variables $q,z$ over $\Q$, and has a natural basis of the form $\{e_{i,j} \mid 1 \le i < j \le n\}$.

Let $\Gamma$ be a Coxeter graph, let $W_\Gamma$ be its associated Coxeter group, let $A_\Gamma$ be its associated Artin group, and let $A_\Gamma^+$ be its associated Artin monoid. 
The Coxeter graph $\Gamma$ is called of \emph{spherical type} if $W_\Gamma$ is finite, it is called \emph{simply laced} if none of its edges is labelled, and it is called \emph{triangle free} if there are no three vertices in $\Gamma$ two by two connected by edges. 
Shortly after the release of the papers by Bigelow \cite{Bige1} and Krammer \cite{Kramm1}, Digne \cite{Digne1} and independently Cohen--Wales \cite{CohWal1} extended Krammer's \cite{Kramm1} constructions and proofs to the Artin groups associated with simply laced Coxeter graphs of spherical type, and, afterwards, Paris \cite{Paris1} extended them to all the Artin groups associated to simply laced triangle free Coxeter graphs (see also Hée \cite{Hee1} for a simplified proof of the faithfulness of the representation).
More precisely, for a finite simply laced triangle free Coxeter graph $\Gamma$, they constructed a linear representation $\psi : A_\Gamma \to \GL(E)$, they showed that this representation is always faithful on the Artin monoid $A^+_\Gamma$, and they showed that it is faithful on the whole group $A_\Gamma$ if $\Gamma$ is of spherical type.
What is important to know here is that $E$ is still a vector space over $\K = \Q(q,z)$ and that $E$ has a natural basis $\BB = \{e_\beta \mid \beta \in \Phi^+ \}$ in one-to-one correspondence with the set $\Phi^+$ of positive roots of $\Gamma$.

The question that motivated the beginning of the present study is to find a way to extend the construction of this linear representation to other Artin groups, or, at least, to some Artin groups whose Coxeter graphs are not simply laced and triangle free. 
A first approach would be to extend Paris' \cite{Paris1} construction to other Coxeter graphs that are not simply laced and triangle free. 
Unfortunately, explicit calculations on simple examples convinced us that this approach does not work.

However, an idea for constructing such linear representations for some Artin groups associated to non simply laced Coxeter graphs can be found in Digne \cite{Digne1}.
In that paper Digne takes a Coxeter graph $\Gamma$ of type $A_{2n+1}$, $D_n$, or $E_6$ and consider some specific symmetry $g$ of $\Gamma$. 
By H\'ee \cite{Hee2} and M\"uhlherr \cite{Muhlh1} the subgroup $W_\Gamma^g$ of fixed elements by $g$ is itself a Coxeter group associated with a precise Coxeter graph $\hat \Gamma$.
By Michel \cite{Miche1}, Crisp \cite{Crisp1, Crisp2} and Dehornoy--Paris \cite{DehPar1}, the subgroup $A_\Gamma^g$ of $A_\Gamma$ of fixed elements by $g$ is an Artin group associated with $\hat \Gamma$.
On the other hand the symmetry $g$ acts on the basis $\BB$ of $E$ and the linear representation $\psi : A_\Gamma \to \GL(E)$ is equivariant in the sense that $\psi(g(a)) = g\, \psi(a) \, g^{-1}$ for all $a \in A_\Gamma$.
It follows that $\psi$ induces a linear representation $\psi^g : A_{\hat\Gamma} \to \GL(E^g)$, where $E^g$ denotes the subspace of fixed vectors of $E$ under the action of $g$. 
Then Digne \cite{Digne1} proves that $\psi^g$ is faithful and that $E^g$ has a ``natural'' basis in one-to-one correspondence with the set $\hat \Phi^+$ of positive roots of $\hat \Gamma$.
This defines a linear representation for the Artin groups associated with the Coxeter graphs $B_n$ ($n \ge 2$), $G_2$ and $F_4$.

Let $\Gamma$ be a finite simply laced triangle free Coxeter graph and let $G$ be a non-trivial group of symmetries of $\Gamma$.
Then $G$ acts on the groups $W_\Gamma$ and $A_\Gamma$ and on the monoid $A_\Gamma^+$.
We know by Hée \cite{Hee2} and M\"uhlherr \cite{Muhlh1} (see also Crisp \cite{Crisp1, Crisp2}, Geneste--Paris \cite{GenPar1} and Theorem \ref{thm2_5}) that $W_\Gamma^G$ is the Coxeter group associated with some precise Coxeter graph $\hat \Gamma$. 
Moreover, by Crisp \cite{Crisp1, Crisp2}, the monoid $A_\Gamma^{+G}$ is an Artin monoid associated with $\hat \Gamma$ and in many cases the group $A_\Gamma^G$ is an Artin group associated with $\hat \Gamma$.  
On the other hand, $G$ acts on the basis $\BB$ of $E$, and the linear representation $\psi: A_\Gamma \to \GL(E)$ is equivariant in the sense that $\psi(g(a)) = g\, \psi(a) \, g^{-1}$ for all $a \in A_\Gamma$ and all $g \in G$.
Thus, $\psi$ induces a linear representation $\psi^G : A_{\hat \Gamma} \to \GL(E^G)$, where $E^G = \{ x \in E \mid g(x) = x \text{ for all } g \in G \}$.
We also know by Castella \cite{Caste2, Caste3} that the induced representation $\psi^G : A_{\hat\Gamma} \to \GL(E^G)$ is faithful on the monoid $A_{\hat \Gamma}^+$.
So, it remains to determine when $E^G$ has a ``natural'' basis in one-to-one correspondence with the set $\hat \Phi^+$ of positive roots of $\hat \Gamma$. 
The purpose of this paper is to answer this question.

\subsection{Statements}

The simply laced triangle free Artin groups and the linear representations $\psi : A_\Gamma \to \GL(E)$ form the framework of our motivation, but they are not needed for the rest of the paper. 
We will also work with any Coxeter graph, which may have labels and infinitely many vertices. 
So, let $\Gamma$ be a Coxeter graph associated with a Coxeter matrix $M = (m_{s,t})_{s,t \in S}$, let $\K$ be a field, and let $E$ be a vector space over $\K$ having a basis $\BB=\{ e_\beta \mid \beta \in \Phi^+ \}$ in one-to-one correspondence with the set $\Phi^+$ of positive roots of $\Gamma$.

A \emph{symmetry} of $\Gamma$ is defined to be a permutation $g$ of $S$ satisfying $m_{g(s),g(t)} = m_{s,t}$ for all $s,t \in S$.
The group of symmetries of $\Gamma$ will be denoted by $\Sym (\Gamma)$.
Let $G$ be a subgroup of $\Sym(\Gamma)$.
Again, we know by Hée \cite{Hee2} and M\"uhlherr \cite{Muhlh1} that $W_\Gamma^G$ is the Coxeter group associated with some Coxeter graph $\hat \Gamma$. 
On the other hand, $G$ acts on the set $\Phi^+$ of positive roots of $\Gamma$ and therefore on $E$.
Let $\hat \Phi^+$ be the set of positive roots of $\hat \Gamma$.
In this paper we show that $E^G$ contains a ``natural'' linearly independent set $\hat \BB = \{ \hat e_{\hat \beta} \mid \hat \beta \in \hat \Phi^+\}$ in one-to-one correspondence with the set $\hat \Phi^+$ and we determine when $\hat \BB$ is a basis of $E^G$.

From now on we will say that the pair $(\Gamma, G)$ has the \emph{$\hat \Phi^+$-basis property} if the above mentioned subset $\hat \BB$ is a basis of $E^G$.

We proceed in three steps to determine the pairs $(\Gamma, G)$ that have the $\hat\Phi^+$-basis property. 
In a first step (see Subsection \ref{subsec3_1}) we show that it suffices to consider the case where all the orbits of $S$ under the action of $G$ are finite. 
Let $S_\fin$ denote the union of the finite orbits of $S$ under the action of $G$, and let $\Gamma_\fin$ denote the full subgraph of $\Gamma$ spanned by $S_\fin$.
Each symmetry $g \in G$ stabilizes $S_\fin$, hence induces a symmetry of $\Gamma_\fin$.
We denote by $G_\fin$ the subgroup of $\Sym (\Gamma_\fin)$ of all these symmetries.
In Subsection \ref{subsec3_1} we will prove the following.

\begin{thm}\label{thm2_9}
The pair $(\Gamma, G)$ has the $\hat \Phi^+$-basis property if and only if the pair $(\Gamma_\fin, G_\fin)$ has the $\hat \Phi^+$-basis property.
\end{thm}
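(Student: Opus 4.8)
The plan is to reduce the claim to a statement about the interplay between the $G$-action on $\Phi^+$ and the decomposition of $S$ (hence of $\Phi^+$) induced by the finite/infinite dichotomy of the orbits. First I would make precise how $\hat\Phi^+$, and with it the candidate family $\hat\BB=\{\hat e_{\hat\beta}\mid\hat\beta\in\hat\Phi^+\}$, is built: each $\hat e_{\hat\beta}$ is (up to the natural identification supplied by the H\'ee--M\"uhlherr theory) a finite $\K$-combination of the $e_\beta$ with $\beta$ ranging over a single finite $G$-orbit of positive roots, namely the orbit-sum attached to the root $\hat\beta$ of $\hat\Gamma$. The key structural observation I would isolate is that a $G$-orbit $\OO\subseteq\Phi^+$ is finite if and only if $\Supp(\beta)\subseteq S_\fin$ for every (equivalently, some) $\beta\in\OO$; this is because $G$ permutes the supports, the support of a positive root is finite, and the $G$-saturation of a finite subset of $S$ is finite exactly when it meets no infinite orbit. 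Consequently the positive roots of $\hat\Gamma$ are precisely the positive roots of $\hat\Gamma_\fin$, i.e. $\hat\Phi^+=\hat\Phi^+_\fin$, and under this identification the family $\hat\BB$ is literally the same family whether computed from $(\Gamma,G)$ or from $(\Gamma_\fin,G_\fin)$.

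With that in place I would split $E$ and $E^G$ along the partition $\Phi^+=\Phi^+_\fin\sqcup(\Phi^+\setminus\Phi^+_\fin)$, where $\Phi^+_\fin$ denotes the positive roots supported in $S_\fin$. Both pieces are $G$-stable, so $E=E_\fin\oplus E'$ with $E_\fin$ spanned by $\{e_\beta\mid\beta\in\Phi^+_\fin\}$ and $E'$ by the remaining basis vectors, and hence $E^G=E_\fin^G\oplus E'^G$. One checks that $E_\fin$ is canonically the space attached to $(\Gamma_\fin,G_\fin)$ and that $\hat\BB\subseteq E_\fin^G$ by the support observation above. Since $\hat\BB$ is always linearly independent (a fact established earlier in the paper, before this reduction), the pair $(\Gamma,G)$ has the $\hat\Phi^+$-basis property if and only if $\hat\BB$ spans $E^G$, i.e. if and only if $\hat\BB$ spans $E_\fin^G$ and $E'^G=0$. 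The first of these conditions is exactly the $\hat\Phi^+$-basis property for $(\Gamma_\fin,G_\fin)$, so the theorem reduces to the claim that $E'^G=0$ always, i.e. that no nonzero $G$-fixed vector is supported on positive roots lying in infinite orbits.

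The main obstacle is therefore proving $E'^G=0$. A fixed vector $x\in E'$ is a finite $\K$-combination $x=\sum_{\beta\in F}\lambda_\beta e_\beta$ with $F$ a finite subset of $\Phi^+\setminus\Phi^+_\fin$; $G$-invariance forces $\lambda_{g(\beta)}=\lambda_\beta$ for all $g\in G$, so the coefficient function is constant on $G$-orbits, but every orbit meeting $F$ is infinite while $F$ is finite, forcing all $\lambda_\beta=0$. This is the crux, and the only subtlety is confirming that each root in $F$ really does lie in an infinite $G$-orbit of roots: this is where the support characterization (finite orbit of a root $\iff$ support contained in $S_\fin$) does the work, together with the fact that an infinite orbit of vertices yields an infinite orbit of the simple roots it carries, and a root whose support meets such a vertex orbit has infinite $G$-orbit. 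Once $E'^G=0$ is secured, the decomposition $E^G=E_\fin^G$ combined with the identification $\hat\Phi^+=\hat\Phi^+_\fin$ and $\hat\BB$ independent of the model yields the equivalence claimed in Theorem \ref{thm2_9} directly. I would also double-check the one slightly delicate point in the identification, namely that the H\'ee--M\"uhlherr graph $\hat\Gamma$ of $(\Gamma,G)$ and the graph $\hat\Gamma_\fin$ of $(\Gamma_\fin,G_\fin)$ coincide (not merely have the same positive roots), which again follows because the defining data of $\hat\Gamma$ — the orbits of $S$ and the finiteness of the parabolic subgroups they generate — only involve the finite orbits.
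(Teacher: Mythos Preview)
Your proposal is correct and follows essentially the same route as the paper. The key structural observation you isolate---that a $G$-orbit of a positive root is finite if and only if its support lies in $S_\fin$---is exactly the paper's Lemma~\ref{lem3_1}, and your identification $\hat\Phi^+=\hat\Phi^+_\fin$, $\Omega_\fin=\Omega'_\fin$ matches the paper's argument line for line. The only cosmetic difference is packaging: the paper phrases the conclusion via the map $F\colon\hat\Phi^+\to\Omega_\fin$ (it is literally the same map for both pairs, hence a bijection for one iff for the other), whereas you phrase it via the decomposition $E^G=E_\fin^G\oplus E'^G$ and the vanishing $E'^G=0$; but your $E'^G=0$ argument is just an unpacking of the fact, already recorded in Section~\ref{newsec3}, that $\BB_0=\{\hat e_\omega\mid\omega\in\Omega_\fin\}$ is a basis of $E^G$.
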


By Theorem \ref{thm2_9} we can assume that all the orbits of $S$ under the action of $G$ are finite.
In a second step (see Subsection \ref{subsec3_2}) we show that it suffices to consider the case where $\Gamma$ is connected.
Let $\Gamma_i$, $i \in I$, be the connected components of $\Gamma$.
For each $i \in I$ we denote by $\Stab_G (\Gamma_i)$ the stabilizer of $\Gamma_i$ in $G$.
Each symmetry $g \in \Stab_G (\Gamma_i)$ induces a symmetry of $\Gamma_i$.
We denote by $G_i$ the subgroup of $\Sym (\Gamma_i)$ of all these symmetries.
In Subsection \ref{subsec3_2} we will prove the following.

\begin{thm}\label{thm2_10}
Suppose that all the orbits of $S$ under the action of $G$ are finite.
Then the pair $(\Gamma, G)$ has the $\hat \Phi^+$-basis property if and only if for each $i \in I$ the pair $(\Gamma_i, G_i)$ has the $\hat \Phi^+$-basis property.
\end{thm}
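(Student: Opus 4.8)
The statement reduces the $\hat\Phi^+$-basis property for $(\Gamma,G)$ to the same property on each connected component, under the standing assumption that all $G$-orbits in $S$ are finite. The natural approach is to make everything in sight decompose along the partition of $S$ into ``$G$-components'', that is, into the orbits of connected components of $\Gamma$ under the action of $G$. For $i\in I$, the stabilizer $\Stab_G(\Gamma_i)$ acts on $\Gamma_i$, giving $G_i\le\Sym(\Gamma_i)$; two components in the same $G$-orbit carry isomorphic data. So I would first group the index set $I$ into $G$-orbits, pick a representative $i$ in each orbit, and observe that a root $\beta\in\Phi^+$ is supported on a single connected component $\Gamma_j$ (since $\Gamma_j$ is a union of connected components of the Coxeter graph in the graph-theoretic sense, and positive roots have connected support). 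Hence $\Phi^+=\bigsqcup_{j\in I}\Phi^+_j$, where $\Phi^+_j$ is the set of positive roots of $\Gamma_j$, and correspondingly $E=\bigoplus_{j\in I}E_j$ with $E_j$ spanned by $\{e_\beta\mid\beta\in\Phi^+_j\}$. The action of $G$ permutes the summands $E_j$ according to its action on $I$, so $E^G=\bigoplus_{[j]}\bigl(\bigoplus_{j'\in[j]}E_{j'}\bigr)^G$, and for a single orbit $[j]$ with representative $j$ one has the standard isomorphism $\bigl(\bigoplus_{j'\in[j]}E_{j'}\bigr)^G\cong E_j^{\,\Stab_G(\Gamma_j)}=E_j^{G_j}$ via restriction to the $j$-component.

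The second ingredient is the corresponding decomposition on the Coxeter-theoretic side. I would invoke the construction of $\hat\Gamma$ from the H\'ee--M\"uhlherr theorem (Theorem \ref{thm2_5} in the paper) to see that $W_\Gamma^G$ splits as a direct product $\prod_{[j]} (W_{\Gamma_j})^{G_j}$ over the $G$-orbits of components, because $W_\Gamma=\prod_{j\in I}W_{\Gamma_j}$ and taking $G$-fixed points of a product on which $G$ acts by permuting isomorphic factors gives the product of fixed-point subgroups of orbit representatives. Consequently $\hat\Gamma$ is the disjoint union $\bigsqcup_{[j]}\hat\Gamma_j$, where $\hat\Gamma_j$ is the Coxeter graph attached to $(W_{\Gamma_j})^{G_j}$; this is the point where I need to be careful that the ``$\hat{}\;$'' construction is compatible with direct products, which should follow directly from its definition via a canonical generating set (the elements $r_X$ associated to the finite-type $G$-orbits $X\subseteq S$, each such $X$ lying inside a single $G$-component). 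Then $\hat\Phi^+=\bigsqcup_{[j]}\hat\Phi^+_j$, and the ``natural'' linearly independent family $\hat\BB=\{\hat e_{\hat\beta}\mid\hat\beta\in\hat\Phi^+\}$ respects this decomposition: each $\hat e_{\hat\beta}$ for $\hat\beta\in\hat\Phi^+_j$ is, by its very construction as an orbit-sum (or average) of the $e_\gamma$ over $\gamma$ in a $G$-orbit of positive roots, supported inside $\bigoplus_{j'\in[j]}E_{j'}$, and under the isomorphism above it corresponds to the analogous vector $\hat e_{\hat\beta}$ for the pair $(\Gamma_j,G_j)$ sitting in $E_j^{G_j}$.

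With both decompositions in place, the theorem is immediate: $\hat\BB$ spans $E^G$ if and only if, for every orbit representative $j$, the sub-family $\{\hat e_{\hat\beta}\mid\hat\beta\in\hat\Phi^+_j\}$ spans $E_j^{G_j}=E_j^{\Stab_G(\Gamma_j)}$, which is exactly the $\hat\Phi^+$-basis property for $(\Gamma_j,G_j)$; linear independence of the whole family is automatic from the statement's hypothesis (it is asserted in the setup that $\hat\BB$ is always linearly independent) and is also clear from the direct-sum decomposition. One small point to settle is that $(\Gamma_i,G_i)$ and $(\Gamma_j,G_j)$ have the same status whenever $i,j$ lie in the same $G$-orbit of components — they are isomorphic as pairs — so quantifying over all $i\in I$ in the statement is equivalent to quantifying over orbit representatives, which is what the argument produces.

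**Main obstacle.** The only genuinely non-routine step is verifying that the H\'ee--M\"uhlherr graph $\hat{}\,$ and the family $\hat\BB$ are compatible with disjoint unions of Coxeter graphs on which $G$ acts by permuting components — in other words, unwinding the definitions of $\hat\Gamma$ and of $\hat e_{\hat\beta}$ (given earlier in the paper) carefully enough to see that everything is ``block-diagonal'' with respect to the partition of $S$ into $G$-components. Once that compatibility is recorded, the rest is bookkeeping with direct sums and the elementary fact that taking $G$-invariants commutes with the direct-sum decomposition into $G$-orbits of summands.
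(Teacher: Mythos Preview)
Your argument is correct, but it takes a genuinely different route from the paper. The paper first establishes a characterization (Proposition~\ref{prop3_3}): under the hypothesis that all $G$-orbits in $S$ are finite, $(\Gamma,G)$ has the $\hat\Phi^+$-basis property if and only if every $\alpha\in\Phi$ has the form $w(\alpha_s)$ for some $w\in W^G$ and $s\in S$. With that reformulation the proof of Theorem~\ref{thm2_10} is a few lines of group theory in the direct product $W=\prod_j W_{S_j}$: for one direction project a given $w\in W^G$ to its $i$-th factor $w_i\in W_{S_i}^{G_i}$; for the other, extend a given $w_i\in W_{S_i}^{G_i}$ to $w=\prod_{j\in J}g_j(w_i)\in W^G$ along the finite $G$-orbit $J$ of $i$ in $I$. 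You bypass Proposition~\ref{prop3_3} entirely and work directly with the linear-algebraic definition, decomposing $E$, $E^G$, $\hat\Phi^+$ and $\hat\BB$ along the $G$-orbits of connected components. What the paper's approach buys is brevity (Proposition~\ref{prop3_3} absorbs all the unwinding of the map $F$) and reusability (that proposition is the workhorse for the rest of Section~\ref{sec3}); what yours buys is self-containment, at the cost of the compatibility checks you identify as the main obstacle --- in particular that the bijection $\SS\leftrightarrow\bigsqcup_{[j]}\SS_j$ carries $u_X$ to $u_{X\cap S_j}$ and intertwines the two maps $F$. One small omission: you should record that the $G$-orbits in $I$ are finite (immediate from the hypothesis on $S$, since any $s\in S_j$ has at least one $G$-translate in each $S_{j'}$ with $j'\in G\cdot j$); this is what guarantees the isomorphism $\bigl(\bigoplus_{j'\in[j]}E_{j'}\bigr)^G\cong E_j^{G_j}$.
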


By Theorem \ref{thm2_10} we can also assume that $\Gamma$ is connected.
In a third step (see Subsection \ref{subsec3_3} and Subsection \ref{subsec3_4}) we determine all the pairs $(\Gamma, G)$ that have the $\hat \Phi^+$-basis property with $\Gamma$ connected and all the orbits of $S$ under the action of $G$ being finite. 

One can associate with $\Gamma$ a real vector space $V = \oplus_{s \in S} \R \alpha_s$ whose basis is in one-to-one correspondence with $S$ and a \emph{canonical bilinear form} $\langle .,. \rangle : V \times V \to \R$.
These objects will be defined in Subsection \ref{subsec2_1}.
We say that $\Gamma$ is of \emph{spherical type} if $S$ is finite and $\langle .,. \rangle$ is positive definite, and we say that $\Gamma$ is of \emph{affine type} if $S$ is finite and $\langle .,. \rangle$ is positive but not positive definite. 
A classification of the connected spherical and affine type Coxeter graphs can be found in Bourbaki \cite{Bourb1}.
In this paper we use the notations $A_m$ ($m \ge 1$), ..., $I_2(p)$ ($p=5$ or $p \ge 7$) of Bourbaki \cite[Chap. VI, Parag. 4, No 1, Th\'eor\`eme 1]{Bourb1} for the connected  Coxeter graphs of spherical type, and the notations $\tilde A_1$, $\tilde A_m$ ($m\ge 2$), ..., $\tilde G_2$ of Bourbaki \cite[Chap. VI, Parag. 4, No 2, Th\'eor\`eme 4]{Bourb1} for the connected  Coxeter graphs of affine type.
Moreover, we use the same numbering of the vertices of these Coxeter graphs as the one in Bourbaki \cite[Planches]{Bourb1} with the convention that the unnumbered vertex in Bourbaki \cite[Planches]{Bourb1} is here labelled with $0$.

To the Coxeter graphs of spherical type and affine type we must add the two infinite Coxeter graphs ${}_\infty A_\infty$ and $D_\infty$ drawn in Figure \ref{fig2_1}.
The Coxeter graph $A_\infty$ of the figure does not appear in the statement of Theorem \ref{thm2_12} but it will appear in its proof.
These Coxeter graphs are part of the family of so-called \emph{locally spherical Coxeter graphs} studied by Hée \cite[Texte 10]{Hee3}.

\begin{figure}[ht!]
\begin{center}
\begin{tabular}{cc}
\parbox[c]{3.8cm}{\includegraphics[width=3.6cm]{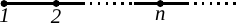}}&
\parbox[c]{4.6cm}{\includegraphics[width=4.4cm]{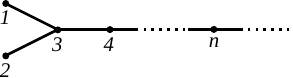}}\\
$A_\infty$&
$D_\infty$
\end{tabular}

\bigskip
\begin{tabular}{c}
\parbox[c]{7.4cm}{\includegraphics[width=7.2cm]{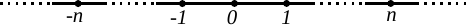}}\\
\noalign{\smallskip}
${}_\infty A_\infty$
\end{tabular}

\caption{Locally spherical Coxeter graphs}\label{fig2_1}
\end{center}
\end{figure}

Now, the conclusion of the third step which is in some sense the main result of the paper is the following theorem, proved in Subsection \ref{subsec3_4}.

\begin{thm}\label{thm2_12}
Suppose that $\Gamma$ is connected, $G$ is non-trivial and all the orbits of $S$ under the action of $G$ are finite.
Then $(\Gamma,G)$ has the $\hat \Phi^+$-basis property if and only if $(\Gamma, G)$ is up to isomorphism one of the following pairs (see Figure \ref{fig2_2}, Figure \ref{fig2_3} and Figure \ref{fig2_4}).
\begin{itemize}
\item[(i)]
$\Gamma = A_{2 m +1}$ ($m \ge 1$) and $G = \langle g \rangle$ where
\[
g(s_i) = s_{2 m +2 -i}\ (1 \le i \le 2 m + 1)\,.
\]
\item[(ii)]
$\Gamma = D_m$ ($m \ge 4$) and $G =\langle g \rangle$ where  
\[
g(s_i) = s_i\ (1 \le i \le m-2)\,,\ g(s_{m-1}) = s_m\,,\ g(s_m) = s_{m-1}\,.
\]
\item[(iii)]
$\Gamma = D_4$ and $\langle g_1 \rangle \subset G \subset \langle g_1, g_2 \rangle$ where 
\begin{gather*}
g_1 (s_1) = s_3\,, \ g_1 (s_2) = s_2\,,\ g_1 (s_3) = s_4\,,\ g_1 (s_4) = s_1\,,\\
g_2 (s_1) = s_1\,,\ g_2 (s_2) = s_2\,,\ g_2 (s_3) = s_4\,,\ g_2 (s_4) = s_3\,.
\end{gather*}
\item[(iv)]
$\Gamma = E_6$ and $G = \langle g \rangle$ where 
\[
g (s_1) = s_6\,,\ g (s_2) = s_2\,,\ g(s_3) = s_5\,,\ g (s_4) = s_4\,,\ g (s_5) = s_3\,,\ g(s_6) = s_1\,.
\]
\item[(v)]
$\Gamma = \tilde A_{2 m +1}$ ($m \ge 1$) and $G = \langle g \rangle$ where 
\[
g(s_0) = s_0\,,\ g(s_i) = s_{2 m +2 -i}\ (1 \le i \le 2 m + 1)\,.
\]
\item[(vi)]
$\Gamma = \tilde D_m$ ($m \ge 4$) and $G =\langle g \rangle$ where  
\[
g(s_i) = s_i\ (0 \le i \le m-2)\,,\ g(s_{m-1}) = s_m\,,\ g(s_m) = s_{m-1}\,.
\]
\item[(vii)]
$\Gamma = \tilde D_4$ and $\langle g_1 \rangle \subset G \subset \langle g_1, g_2 \rangle$ where 
\begin{gather*}
g_1(s_0) = s_0\,,\ g_1 (s_1) = s_3\,, \ g_1 (s_2) = s_2\,,\ g_1 (s_3) = s_4\,,\ g_1 (s_4) = s_1\,,\\
g_2(s_0) = s_0\,,\ g_2 (s_1) = s_1\,,\ g_2 (s_2) = s_2\,,\ g_2 (s_3) = s_4\,,\ g_2 (s_4) = s_3\,.
\end{gather*}
\item[(viii)]
$\Gamma = \tilde E_6$ and $G = \langle g \rangle$ where
\end{itemize}
\[
g(s_0) = s_0\,,\ g (s_1) = s_6\,,\ g (s_2) = s_2\,,\ g(s_3) = s_5\,,\ g (s_4) = s_4\,,\ g (s_5) = s_3\,,\ g(s_6) = s_1\,.
\]
\begin{itemize}
\item[(ix)]
$\Gamma = {}_\infty A_\infty$ and $G = \langle g \rangle$ where 
\[
g(s_i) = s_{-i}\ (i \in \Z)\,.
\]
\item[(x)]
$\Gamma = D_\infty$ and $G = \langle g \rangle$ where 
\[
g(s_1) = s_2\,,\ g(s_2) = s_1\,,\ g(s_i) = s_i\ (i \ge 3)\,.
\]
\end{itemize}
\end{thm}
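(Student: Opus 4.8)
The plan is to reduce the statement to a combinatorial comparison of root systems and then to classify. The first ingredient, worked out in Subsection~\ref{subsec3_3}, is a concrete description of the family $\hat\BB=\{\hat e_{\hat\beta}\mid\hat\beta\in\hat\Phi^+\}$. To each $\hat\beta\in\hat\Phi^+$ one attaches a well-defined $G$-orbit $\mu(\hat\beta)$ of positive roots of $\Gamma$: namely, the reflection of $(W_\Gamma^G,\hat S)$ associated with $\hat\beta$, written inside $W_\Gamma$ as a product of mutually orthogonal reflections $r_{\beta_1}\cdots r_{\beta_k}$, and $\mu(\hat\beta)=\{\beta_1,\dots,\beta_k\}$, which is $G$-stable; then $\hat e_{\hat\beta}$ is the orbit sum $e_{\mu(\hat\beta)}=\sum_{\beta\in\mu(\hat\beta)}e_\beta$, the map $\mu\colon\hat\Phi^+\to\Phi^+/G$ is injective, and one has the support identity $\Supp_\Gamma\mu(\hat\beta)=\bigcup_{J\in\Supp_{\hat\Gamma}(\hat\beta)}J$. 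Since the orbit sums $e_\OO$, $\OO\in\Phi^+/G$, form a basis of $E^G$, the family $\hat\BB$ is a linearly independent subfamily of this basis, and hence $(\Gamma,G)$ has the $\hat\Phi^+$-basis property if and only if $\mu$ is surjective --- equivalently, $|\hat\Phi^+|=|\Phi^+/G|$ when $\Phi^+$ is finite. The task is therefore to determine the connected pairs $(\Gamma,G)$, with $G\neq1$ and all $G$-orbits of $S$ finite, for which every $G$-orbit of positive roots of $\Gamma$ lies in the image of $\mu$.

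\emph{Sufficiency.} One checks that $\mu$ is onto for each pair in (i)--(x). In the finite-type cases (i)--(iv) the graph $\hat\Gamma$ is respectively of type $C_{m+1}$, $B_{m-1}$, $G_2$, $F_4$; here $\Phi^+$ and $\hat\Phi^+$ are finite and the identity $|\hat\Phi^+|=|\Phi^+/G|$ is a direct count from the root systems of $A_{2m+1}$, $D_m$, $D_4$, $E_6$ using the explicit actions of $G$, surjectivity of $\mu$ then following from its injectivity. In the affine cases (v)--(viii) the node $s_0$ is fixed by $G$ and so is the imaginary root $\delta$; decomposing a positive real root of $\Gamma$ along the $\delta$-direction, the $G$-action is seen to respect the decomposition, and surjectivity of $\mu$ for $\Gamma$ reduces to surjectivity for the corresponding spherical pair, i.e.\ to cases (i)--(iv). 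In the locally spherical cases (ix)--(x) one writes ${}_\infty A_\infty$ (resp.\ $D_\infty$) as the increasing union of the subgraphs $A_{2m+1}$ (resp.\ $D_m$) along inclusions compatible with the symmetry; then $\hat\Phi^+$, $\Phi^+/G$ and $\mu$ are compatible with these exhaustions, and one concludes by passing to the limit from cases (i) and (ii).

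\emph{Necessity.} Assume $(\Gamma,G)$ has the $\hat\Phi^+$-basis property, $\Gamma$ connected and $G\neq1$. One first proves a heredity reduction: if $\Gamma'$ is a full subgraph of $\Gamma$ that is a union of $G$-orbits of $S$ and $G'$ is the group it induces, then by the support identity the $\mu$-image for $(\Gamma,G)$ restricts to the $\mu$-image for $(\Gamma',G')$, so $(\Gamma',G')$ again has the $\hat\Phi^+$-basis property (with an analogous statement after passing to a subgroup of $G$). Elementary computations feed into this: for instance no $G$-orbit of vertices may contain an edge, since already $(A_2,\mathbb{Z}/2)$ with the transposition fails, and no $G$-orbit of vertices may span a graph of type $\tilde A_1$ or $\tilde A_2$ with a transitive action; likewise the link of a $G$-fixed vertex is strongly constrained. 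Running the resulting case analysis --- using Bourbaki's classification \cite{Bourb1} of the spherical and affine Coxeter graphs in the finite-rank situation and H\'ee's classification \cite{Hee3} of the locally spherical Coxeter graphs otherwise, where the auxiliary graph $A_\infty$ of Figure~\ref{fig2_1} intervenes as the shape forced on a branch issuing from a $G$-fixed vertex --- one finds that $\Gamma$ must be of spherical, affine or locally spherical type and that $G$ must be one of the admissible symmetry groups. Inspecting the finitely many possibilities then leaves precisely (i)--(x); pairs such as $(A_{2m},\text{flip})$ or $(\tilde D_4,S_4)$ are eliminated by the above constraints.

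\emph{Main obstacle.} I expect the necessity part to be the crux. One must (a) make the criterion of Subsection~\ref{subsec3_3} sufficiently explicit to extract a \emph{finite and complete} list of elementary obstructions, and (b) carry out the delicate combinatorial classification of connected Coxeter graphs carrying a nontrivial symmetry group and avoiding those obstructions --- in effect re-deriving that (i)--(x) are exactly the ``admissibly foldable'' Coxeter graphs. A genuine secondary difficulty is that in the affine and locally spherical cases $|\hat\Phi^+|=|\Phi^+/G|=\aleph_0$, so equality of cardinalities is not sufficient and surjectivity of $\mu$ must really be established; this is handled by the reduction to the spherical case for (v)--(viii) and by the direct-limit argument for (ix)--(x).
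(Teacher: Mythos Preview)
Your map $\mu$ coincides with the paper's $F$, and your reformulation ``$\hat\BB$ is a basis iff $\mu$ is surjective'' is exactly the paper's Proposition~\ref{prop3_3}. Your sufficiency argument for the spherical cases (i)--(iv) by the cardinality identity $|\hat\Phi^+|=|\Phi^+/G|$ is correct and cleaner than the paper's approach: the paper instead exhibits, case by case, explicit elements of $W^G$ carrying each positive root to a simple one (Lemma~\ref{lem3_20}). Your affine reduction (v)--(viii) and the direct-limit argument for (ix)--(x) are essentially the paper's Lemmas~\ref{lem3_22} and~\ref{lem3_23}; note however that the affine step is not pure bookkeeping---one still needs to produce elements of $W^G$ realising the $\delta$-translations (the paper's Claim~2 in Lemma~\ref{lem3_22}), which your sketch elides.

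The genuine gap is in necessity. Heredity to $G$-stable full subgraphs is valid, but the obstructions it yields are \emph{local} (they constrain $\Gamma_X$ for orbits $X$, or small $G$-stable pieces). These are far too weak to force $\Gamma$ into the spherical/affine/locally spherical list: there are plenty of simply laced trees with an involution whose orbits are edgeless and which are not $ADE/\tilde{ADE}/D_\infty/{}_\infty A_\infty$. The paper's decisive tool, which your outline lacks, is a \emph{global} constraint on the entire root system: if $(\Gamma,G)$ has the property and $g\in G$, then $\langle\alpha,g(\alpha)\rangle=0$ whenever $g(\alpha)\neq\alpha$ (Lemma~\ref{lem3_14}). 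From this the paper builds an equivalence relation $\equiv$ on $\Phi$ (generated by $\langle\alpha,\beta\rangle\notin\{0,\pm1\}$), shows that the $\hat\Phi^+$-basis property forces $\equiv$ to have at least two classes (Lemmas~\ref{lem3_15}--\ref{lem3_16}, Proposition~\ref{prop3_17}), and independently classifies the connected $\Gamma$ for which $\equiv$ has more than one class (Corollary~\ref{corl3_13}, via Lemmas~\ref{lem3_8}--\ref{lem3_12}). This is what pins $\Gamma$ down; the remaining exclusion of bad $(\Gamma,G)$ pairs (Lemma~\ref{lem3_18}) then uses Lemma~\ref{lem3_14} again by exhibiting roots $\alpha$ with $g(\alpha)\neq\alpha$ but $\langle\alpha,g(\alpha)\rangle\neq0$. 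Your ``finite and complete list of elementary obstructions'' would in effect have to rediscover Lemma~\ref{lem3_14}; without it, the case analysis you propose cannot terminate.
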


\begin{figure}[ht!]
\begin{center}
\begin{tabular}{cc}
\parbox[c]{5cm}{\includegraphics[width=4.8cm]{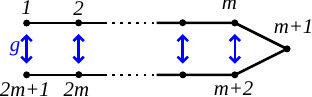}}&
\parbox[c]{4.8cm}{\includegraphics[width=4.4cm]{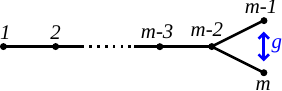}}\\
\noalign{\smallskip}
$(i)\quad (A_{2m+1}, \langle g \rangle)$&
$(ii)\quad (D_m, \langle g \rangle)$
\end{tabular}

\bigskip
\begin{tabular}{cc}
\parbox[c]{2.6cm}{\includegraphics[width=2.4cm]{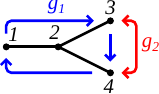}}&
\parbox[c]{3cm}{\includegraphics[width=2.8cm]{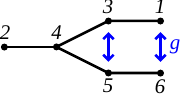}}\\
\noalign{\smallskip}
$(iii)\quad (D_4, \langle g_1, g_2 \rangle)$&
$(iv)\quad (E_6, \langle g \rangle)$
\end{tabular}

\caption{Pairs with the $\hat \Phi^+$-basis property: spherical type cases}\label{fig2_2}
\end{center}
\end{figure}

\begin{figure}[ht!]
\begin{center}
\begin{tabular}{cc}
\parbox[c]{5.4cm}{\includegraphics[width=5.2cm]{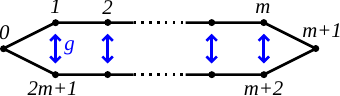}}&
\parbox[c]{5.2cm}{\includegraphics[width=5cm]{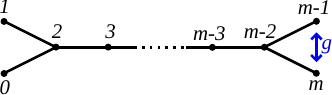}}\\
\noalign{\smallskip}
$(v)\quad (\tilde A_{2m+1}, \langle g \rangle)$&
$(vi)\quad (\tilde D_m, \langle g \rangle)$
\end{tabular}

\bigskip
\begin{tabular}{cc}
\parbox[c]{2.6cm}{\includegraphics[width=2.4cm]{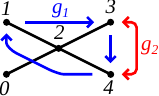}}&
\parbox[c]{3.8cm}{\includegraphics[width=3.6cm]{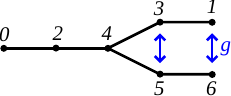}}\\
\noalign{\smallskip}
$(vii)\quad (\tilde D_4, \langle g_1, g_2 \rangle)$&
$(viii)\quad (\tilde E_6, \langle g \rangle)$
\end{tabular}

\caption{Pairs with the $\hat \Phi^+$-basis property: affine type cases}\label{fig2_3}
\end{center}
\end{figure}

\begin{figure}[ht!]
\begin{center}
\begin{tabular}{cc}
\parbox[c]{4.6cm}{\includegraphics[width=4.4cm]{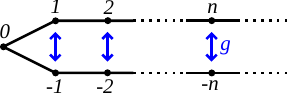}}&
\parbox[c]{5cm}{\includegraphics[width=4.8cm]{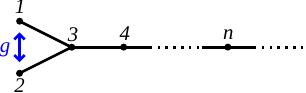}}\\
\noalign{\smallskip}
$(ix)\quad ({}_\infty A_\infty, \langle g \rangle)$&
$(x)\quad (D_\infty, \langle g \rangle)$
\end{tabular}

\caption{Pairs with the $\hat \Phi^+$-basis property: locally spherical type cases}\label{fig2_4}
\end{center}
\end{figure}

\subsection{Linear representations}

We return to our initial motivation before starting the proofs. 
Recall that a Coxeter graph $\Gamma$ is called \emph{simply laced} if $m_{s,t} \in \{2,3\}$ for all $s,t \in S$, $s \neq t$, and that $\Gamma$ is called \emph{triangle free} if there are no three distinct vertices $s,t,r \in S$ such that $m_{s,t}, m_{t,r}, m_{r,s} \ge 3$.
Suppose that $\Gamma$ is a finite, simply laced and triangle free Coxeter graph. 
Let $A_\Gamma$ be the Artin group and $A_\Gamma^+$ be the Artin monoid associated with $\Gamma$.
Suppose that $\K = \Q(q,z)$ and $E$ is a vector space over $\K$ having a basis $\BB = \{ e_\alpha \mid \alpha \in \Phi^+\}$ in one-to-one correspondence with the set $\Phi^+$ of positive roots of $\Gamma$.
By Krammer \cite{Kramm1}, Cohen--Wales \cite{CohWal1}, Digne \cite{Digne1} and Paris \cite{Paris1}, there is a linear representation $\psi : A_\Gamma \to \GL (E)$ which is faithful if $\Gamma$ is of spherical type and  which is always faithful on the monoid $A_\Gamma^+$.
Let $G$ be a group of symmetries of $\Gamma$.
Recall from Subsection \ref{subsec1_1} that $W_\Gamma^G$ is a Coxeter group associated with a precise Coxeter graph $\hat \Gamma$ and by Crisp \cite{Crisp1, Crisp2} we have $(A_\Gamma^+)^G = A_{\hat \Gamma}^+$.
Then $G$ acts on $E$, the linear representation $\psi$ is equivariant, and it induces a linear representation $\psi^G : A_{\hat \Gamma} \to \GL (E^G)$.
By Castella \cite{Caste2, Caste3} this representation is always faithful on $A_{\hat \Gamma}^+$ and is faithful on the whole $A_{\hat \Gamma}$ if $\Gamma$ is of spherical type.

One can find an explicit description of $\hat \Gamma$ in Crisp \cite{Crisp1, Crisp2} and in Geneste--Paris \cite{GenPar1}.
In particular, we have $\hat \Gamma = B_{m+1}$ in Case (i) of Theorem \ref{thm2_12}, we have $\hat \Gamma = B_{m-1}$ in Case (ii), $\hat \Gamma = G_2$ in Case (iii), $\hat \Gamma = F_4$ in Case (iv), $\hat \Gamma = \tilde C_{m+1}$ if $m \ge 2$ and $\hat \Gamma = \tilde B_2$ if $m=1$ in Case (v), $\hat \Gamma = \tilde B_{m-1}$ in Case (vi), $\hat \Gamma = \tilde G_2$ in Case (vii), and $\hat \Gamma = \tilde F_4$ in Case (viii).

So, concerning a description of a linear representation $\psi : A_\Gamma \to \GL(E)$ as above, where $E$ has a given basis $\BB = \{ e_\alpha \mid \alpha \in \Phi^+\}$ in one-to-one correspondence with the set $\Phi^+$ of positive roots of $\Gamma$, for $\Gamma$ of spherical or affine type, the situation is as follows.
For the following Coxeter graphs of spherical type the construction is done and the representation is faithful on the whole group $A_\Gamma$.
\begin{itemize}
\item
$A_m$, ($m \ge 1$): original work of Krammer \cite{Kramm1}.
\item
$D_m$ ($m \ge 4$), $E_6,\ E_7,\ E_8$: due to Digne \cite{Digne1} and independently to Cohen--Wales \cite{CohWal1}. 
\item
$B_m$ ($m \ge 2$), $F_4,\ G_2$: due to Digne \cite{Digne1}.
\end{itemize}
Such representations for the Coxeter graphs $H_3$, $H_4$ and $I_2(p)$ ($p=5$ or $p \ge 7$) are unknown.
For the following Coxeter graphs of affine type the construction is done and the representation is faithful on the Artin monoid $A_\Gamma^+$.
\begin{itemize}
\item
$\tilde A_m$ ($m \ge 3$), $\tilde D_m$ ($m \ge 4$), $\tilde E_6$, $\tilde E_7$, $\tilde E_8$: due to Paris \cite{Paris1}.
\item
$\tilde A_2$: due to Castella \cite{Caste1}.
\item
$\tilde B_m$ ($m \ge 2$), $\tilde C_m$ ($m \ge 3$), $\tilde F_4$, $\tilde G_2$: due Castella \cite{Caste2, Caste3} for the construction of the representation $\psi : A_\Gamma \to \GL (E)$ and for the proof of the faithfulness of $\psi$ on $A_\Gamma^+$, and due to the present work for an explicit construction of a basis in one-to-one correspondence with $\Phi^+$.
\end{itemize}
Curiously a construction for the remaining Coxeter graph of affine type, $\tilde A_1$, is unknown.

\subsection{Organization of the paper}

The paper is organized as follows.
In Subsection \ref{subsec2_1} and Subsection \ref{subsec2_2} we give preliminaries on root systems and symmetries.
In Section \ref{newsec3} we define the subset $\hat \BB = \{ \hat e_{\hat \beta} \mid \hat \beta \in \hat \Phi^+\}$ of $E^G$.
Section \ref{sec3} is dedicated to the proofs.
Theorem \ref{thm2_9} is proved in Subsection \ref{subsec3_1} and Theorem \ref{thm2_10} is proved in Subsection \ref{subsec3_2}.
The proof of Theorem \ref{thm2_12} is divided into two parts.
In a first part (see Subsection \ref{subsec3_3}) we show that, under the assumptions ``finite orbits and $\Gamma$ connected'', the $\hat \Phi^+$-basis property is quite restrictive.
More precisely we prove the following.

\begin{prop}\label{prop2_11}
Suppose that $\Gamma$ is a connected Coxeter graph, $G$ is a non-trivial group of symmetries of $\Gamma$, and all the orbits of $S$ under the action of $G$ are finite.
If $(\Gamma, G)$ has the $\hat \Phi^+$-basis property, then $\Gamma$ is one of the following Coxeter graphs: $A_{2m+1}$ ($m \ge 1$), $D_m$ ($m \ge 4$), $E_6$, $\tilde A_{2m+1}$ ($m \ge 1$), $\tilde D_m$ ($m \ge 4$), $\tilde E_6$, $\tilde E_7$, ${}_\infty A_\infty$, $D_\infty$.
\end{prop}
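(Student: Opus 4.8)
The plan is to use the structure theory of infinite Coxeter graphs together with a local analysis of the $\hat\Phi^+$-basis property to whittle down the possible connected $\Gamma$ carrying a non-trivial finite-orbit symmetry group. The key heuristic (to be established in Subsection~\ref{subsec3_3}) is that the $\hat\Phi^+$-basis property is \emph{inherited by $G$-stable full subgraphs}: if $\Gamma'$ is a full subgraph of $\Gamma$ stabilized (as a set) by $G$ and $G'$ is the induced symmetry group, then $(\Gamma,G)$ having the $\hat\Phi^+$-basis property forces $(\Gamma',G')$ to have it as well. This is plausible because $\Phi'^+ \subseteq \Phi^+$ and $\hat\Phi'^+ \subseteq \hat\Phi^+$ compatibly, so a linear dependence witnessing failure of the basis property in $E'^G$ persists in $E^G$; the vectors $\hat e_{\hat\beta}$ for $\hat\beta$ coming from $\Gamma'$ should have $\Phi'^+$-supported expansions. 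Granting this, the argument becomes a search for \emph{forbidden configurations}: small $(\Gamma',G')$ that fail the property and hence cannot sit $G$-stably inside any $(\Gamma,G)$ with the property.

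The main steps, in order, are: (1) Reduce to the case where $G$ is generated by a single symmetry of small order, or at least organize the analysis by the cycle structure of $G$ acting on vertices near an orbit; since orbits are finite, around any vertex the local picture is a finite Coxeter graph with a finite symmetry group. (2) Classify the ``forbidden'' small pairs by direct computation: these will include symmetries fixing a labelled edge pointwise, symmetries inverting a labelled edge, symmetries acting on the small spherical graphs $B_\ell, H_3, H_4, I_2(p)$, and the problematic simply-laced configurations such as an order-$3$ symmetry of $D_4$ that is \emph{not} one of the admissible $D_4$ cases — anything whose $\hat\Gamma$ or whose fixed-space dimension count is wrong. The dimension bookkeeping is $\dim E^G = \#\{G\text{-orbits on }\Phi^+\}$, to be compared with $\#\hat\Phi^+$; failure of equality (when the family $\hat\BB$ is already known to be independent) is exactly failure of the basis property, so each forbidden configuration reduces to a finite root-count discrepancy. (3) Combine: given a connected $\Gamma$ with non-trivial finite-orbit $G$ and the $\hat\Phi^+$-basis property, show that avoiding all forbidden $G$-stable full subgraphs forces $\Gamma$ to be simply laced (no forbidden labelled local pieces), then forces the allowed branch/cycle structure, pinning $\Gamma$ to the list $A_{2m+1}, D_m, E_6, \tilde A_{2m+1}, \tilde D_m, \tilde E_6, \tilde E_7, {}_\infty A_\infty, D_\infty$. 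Here I would lean on the classification of connected spherical, affine, and locally spherical Coxeter graphs (Bourbaki~\cite{Bourb1}, H\'ee~\cite{Hee3}): once $\Gamma$ is simply laced and admits a non-trivial symmetry of the restricted type, it is one of the finitely many admissible shapes or an infinite limit thereof, and one checks $A_\infty$ and most of $\tilde A_m$, $\tilde E_7$-with-its-wrong-symmetries, etc., are excluded by a forbidden subgraph while ${}_\infty A_\infty$, $D_\infty$ survive.

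The hard part will be step~(1)--(2): proving cleanly that the $\hat\Phi^+$-basis property really does pass to $G$-stable full subgraphs, and then assembling a \emph{complete} and non-redundant list of minimal forbidden configurations. The subtlety is that $\hat\Gamma$ depends on $(\Gamma,G)$ in a non-obvious way (the H\'ee--M\"uhlherr construction), so ``$\hat\Phi'^+$ embeds in $\hat\Phi^+$'' needs the explicit description of the folded root system and of the vectors $\hat e_{\hat\beta}$ from Section~\ref{newsec3}; one must check the support of $\hat e_{\hat\beta'}$ really lies in $\Phi'^+$ and that no ``new'' relations among these restricted vectors are created by the ambient $\Phi^+$. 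A secondary annoyance is that for infinite $\Gamma$ one cannot literally count orbits, so the exclusion of, say, $A_\infty$ with the flip or $\tilde A_m$ with $m$ even must be done via a finite forbidden sub-configuration (e.g. a length-$2$ path fixed or swapped in the wrong way, or a $D_4$-type node) rather than a global dimension count. Once the forbidden list is in hand, the combinatorial endgame—matching against the Bourbaki/H\'ee classifications—is routine case-checking, and I would relegate it to a table.
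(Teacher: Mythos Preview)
Your central heuristic—that the $\hat\Phi^+$-basis property descends to $G$-stable full subgraphs—is in fact correct, but your justification for it is backwards. The set $\hat\BB$ is \emph{always} linearly independent: the map $F:\hat\Phi^+\to\Omega_\fin$ is injective (Section~\ref{newsec3}), so $\hat\BB$ sits inside the basis $\BB_0=\{\hat e_\omega\mid\omega\in\Omega_\fin\}$ of $E^G$. Failure of the basis property is therefore never a linear dependence; it is failure of $F$ to be \emph{surjective}, i.e.\ the existence of a finite $G$-orbit $\omega\subset\Phi^+$ not of the form $w(\omega_X)$ for $w\in W^G$, $X\in\SS$. The correct argument for inheritance runs the other way: if $(\Gamma,G)$ has the property and $\omega'\in\Omega'_\fin$, then $F^{-1}(\omega')=w(\hat\alpha_X)\in\hat\Phi^+$ with $w(\omega_X)=\omega'\subset\Phi'$; one then checks $w(\hat\alpha_X)\in\hat V\cap V'=\hat V'$ (using that $S'$ is a union of $G$-orbits), whence $w(\hat\alpha_X)\in\hat\Phi\cap\hat V'=\hat\Phi'$ by Proposition~\ref{prop2_3} applied to $\hat\B$, so $F'$ hits $\omega'$. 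You should fix this, since as written your ``plausible because'' paragraph is simply wrong.

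More seriously, the paper does not proceed by forbidden subgraphs at all, and your program would be substantially harder to complete than you suggest. The paper's route is: define an equivalence relation $\equiv$ on $\Phi$ generated by $\alpha\equiv_1\beta\Leftrightarrow\langle\alpha,\beta\rangle\notin\{0,\pm1\}$, prove (Corollary~\ref{corl3_13}) by direct combinatorics that if $\equiv$ has at least two classes then $\Gamma$ lies in a short explicit list, and then prove (Proposition~\ref{prop3_17}) that the $\hat\Phi^+$-basis property forces $\equiv$ to have at least two classes. The engine for the latter is a single sharp lemma (Lemma~\ref{lem3_14}): if the property holds and $g(\alpha)\neq\alpha$ then $\langle\alpha,g(\alpha)\rangle=0$. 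From this one deduces that $g$-fixedness propagates along $\equiv_1$ (Lemma~\ref{lem3_15}) and that every $g\in G$ fixes some vertex (Lemma~\ref{lem3_16}); together with nontriviality of $G$ this forces two $\equiv$-classes. This replaces your open-ended search for ``a complete and non-redundant list of minimal forbidden configurations'' by one orthogonality computation and a short classification of simply laced graphs with no proper affine subgraph (Lemma~\ref{lem3_12}). Your approach could in principle be pushed through, but the list of forbidden pieces you sketch (labelled edges, $B_\ell$, $H_3$, $H_4$, $I_2(p)$, a wrong $D_4$ rotation) is neither complete nor organized around any unifying principle, and it is not clear how you would exclude, say, an arbitrary large simply laced tree with a nontrivial automorphism without rediscovering something like the $\equiv$-relation argument.
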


In a second part (see Subsection \ref{subsec3_4}) we study all the possible pairs $(\Gamma, G)$ with $\Gamma$ in the list of Proposition \ref{prop2_11} and $G$ non-trivial to prove Theorem \ref{thm2_12}.

\section{Preliminaries}

\subsection{Root systems}\label{subsec2_1}

Let $S$ be a (finite or infinite) set. 
A \emph{Coxeter matrix} on $S$ is a square matrix $M = (m_{s,t})_{s,t \in S}$ indexed by the elements of $S$ with coefficients in $\N \cup \{\infty \}$, such that $m_{s,s} = 1$ for all $s \in S$, and $m_{s,t} = m_{t,s} \ge 2$ for all $s,t \in S$, $s\neq t$.
This matrix is represented by its \emph{Coxeter graph}, $\Gamma$, defined as follows.
The set of vertices of $\Gamma$ is $S$, two vertices $s,t$ are connected by an edge if $m_{s,t} \ge 3$, and this edge is labelled with $m_{s,t}$ if $m_{s,t} \ge 4$.

The \emph{Coxeter group} associated with $\Gamma$ is the group $W = W_\Gamma$ defined by the presentation
\[
W = \langle S \mid s^2=1 \text{ for all } s \in S\,,\ (st)^{m_{s,t}}=1 \text{ for all } s,t \in S\,,\ s \neq t\text{ and } m_{s,t} \neq \infty \rangle\,.
\]
The pair $(W,S)$ is called the \emph{Coxeter system} associated with $\Gamma$.

There are several notions of ``root systems'' attached to all Coxeter groups.
The most commonly used is that defined by Deodhar \cite{Deodh1} and taken up by Humphreys \cite{Humph1}.
As Geneste--Paris \cite{GenPar1} pointed out, this definition is not suitable for studying symmetries of Coxeter graphs.
In that case it is better to take the more general definition given by Krammer \cite{Kramm2, Kramm3} and taken up by Davis \cite{Davis1}, or the even more general one given by H\'ee \cite{Hee2}.
We will use the latter in this paper.

We call \emph{root prebasis} a quadruple $\B = (S,V,\Pi,\RR)$, where $S$ is a (finite or infinite) set, $\Pi= \{\alpha_s  \mid s \in S \}$ is a set in one-to-one correspondence with $S$, $V= \oplus_{s \in S} \R \alpha_s$ is a real vector space with $\Pi$ as a basis, and $\RR = \{ \sigma_s \mid s \in S\}$ is a collection of linear reflections of $V$ such that $\sigma_s (\alpha_s) = -\alpha_s$ for all $s \in S$.
For all $s,t \in S$ we denote by $m_{s,t}$ the order of $\sigma_s \sigma_t$.
Then $M=(m_{s,t})_{s,t \in S}$ is a Coxeter matrix.
The Coxeter group of this matrix is denoted by  $W(\B)$.
We have a linear representation $f : W(\B) \to \GL (V)$ which sends $s$ to $\sigma_s$ for all $s \in S$.
Since we do not need to specify the map $f$ in general, for $w \in W$ and $x \in V$, the vector $f(w)(x)$ will be simply written $w(x)$.
We set $\Phi(\B) = \{ w(\alpha_s) \mid s \in S \text{ and } w \in W \}$ and we denote by $\Phi(\B)^+$ (resp. $\Phi(\B)^-$) the set of elements $\beta\in\Phi(\B)$ that are written $\beta = \sum_{s \in S} \lambda_s \alpha_s$ with $\lambda_s \ge 0$ (resp. $\lambda_s \le 0$) for all $s \in S$.
We say that $\B$ is a \emph{root basis} if we have the disjoint union $\Phi(\B) = \Phi (\B)^+ \sqcup \Phi (\B)^-$.
In that case $\Phi (\B)$ is called a \emph{root system} and the elements of $\Phi(\B)^+$ (resp. $\Phi (\B)^-$) are called \emph{positive roots} (resp. \emph{negative roots}).
Finally, we say that $\B$ is a \emph{reduced root basis} and that $\Phi (\B)$ is a \emph{reduced root system} if, in addition, we have $\R \alpha_s \cap \Phi (\B) = \{ \alpha_s, -\alpha_s\}$ for all $s \in S$.

\begin{expl}
Let $\Gamma$ be a Coxeter graph associated with a Coxeter matrix $M=(m_{s,t})_{s,t \in S}$.
As before, we set $\Pi=\{\alpha_s \mid s \in S\}$ and $V= \oplus_{s \in S} \R \alpha_s$.
We define a symmetric bilinear form $\langle .,. \rangle : V \times V \to \R$ by
\[
\langle \alpha_s, \alpha_t \rangle = \left\{ \begin{array}{ll}
-2 \cos(\pi/m_{s,t}) &\text{if } m_{s,t} \neq \infty\,,\\
-2 &\text{if } m_{s,t} = \infty\,.
\end{array} \right.
\]
For each $s \in S$ we define a reflection $\sigma_s: V \to V$ by $\sigma_s (x) = x - \langle \alpha_s, x \rangle \alpha_s$ and we set $\RR = \{\sigma_s \mid s \in S\}$.
By Deodhar \cite{Deodh1}, $\B = (S,V,\Pi, \RR)$ is a root basis and, by Bourbaki \cite{Bourb1}, $W_\Gamma = W(\B)$ and the representation $f: W_\Gamma \to \GL (V)$ is faithful.
This root basis is reduced since we have $\langle \beta, \beta \rangle = 2$ for all $\beta \in \Phi (\B)$.
It is called the \emph{canonical root basis} of $\Gamma$ and the bilinear form $\langle .,. \rangle$ is called the \emph{canonical bilinear form} of $\Gamma$.
\end{expl}

In this subsection we give the results on root systems that we will need, and we refer to H\'ee \cite{Hee2} for the proofs and more results.

\begin{thm}[H\'ee \cite{Hee2}]\label{thm2_1}
Let $\B = (S,V,\Pi, \RR)$ be a root basis.
Then the induced linear representation $f : W(\B) \to \GL (V)$ is faithful.
\end{thm}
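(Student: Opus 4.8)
The plan is to run the classical Tits argument for faithfulness of the geometric representation of a Coxeter group, but carried out entirely with the combinatorial data of the partition $\Phi(\B) = \Phi(\B)^+ \sqcup \Phi(\B)^-$, since a general root basis carries no invariant bilinear form to compute with. Write $\ell$ for the length function of the Coxeter system $(W(\B),S)$. Everything will rest on the following dictionary between $\ell$ and signs of roots, which I denote $(\ast)$: for all $w \in W(\B)$ and $s \in S$,
\[
\ell(ws) < \ell(w) \iff w(\alpha_s) \in \Phi(\B)^-\,.
\]
Granting $(\ast)$, the theorem drops out immediately: if $w \neq 1$, choose a right descent $s$ of $w$, i.e.\ $\ell(ws) < \ell(w)$; then $w(\alpha_s) \in \Phi(\B)^-$ by $(\ast)$, while $\alpha_s \in \Phi(\B)^+$ and $\Phi(\B)^+ \cap \Phi(\B)^- = \emptyset$ because the union is disjoint. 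Hence $f(w)(\alpha_s) = w(\alpha_s) \neq \alpha_s$, so $f(w) \neq \id$ and $\ker f = \{1\}$.

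Before proving $(\ast)$ I would record two elementary facts. First, $\Phi(\B)$ is stable under $x \mapsto -x$, since $-w(\alpha_s) = (w\sigma_s)(\alpha_s)$; hence $\Phi(\B)^- = -\Phi(\B)^+$, and together with $\sigma_s(\alpha_s) = -\alpha_s$ this yields $w(\alpha_s) = -w'(\alpha_s)$ whenever $w = w's$ with $\ell(w') = \ell(w)-1$. Second, each $\sigma_s$ is a reflection, so $\sigma_s(x) = x - \varphi_s(x)\,\alpha_s$ for a linear form $\varphi_s$ with $\varphi_s(\alpha_s) = 2$; consequently $\sigma_s$ acts as the identity on $V/\R\alpha_s$, the plane $V_{s,t} := \R\alpha_s \oplus \R\alpha_t$ is invariant under $\langle \sigma_s,\sigma_t\rangle$, and the restriction of $\B$ to $V_{s,t}$ is again a root basis (of dihedral type, the partition of $\Phi(\B)$ restricting). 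Moreover $\sigma_t(\alpha_s) = \alpha_s - \varphi_t(\alpha_s)\,\alpha_t$ lies in $\Phi(\B)$ with positive $\alpha_s$-coordinate, so it is a positive root and $\varphi_t(\alpha_s) \le 0$; in particular every positive root of the rank-two restriction is a non-negative combination of $\alpha_s$ and $\alpha_t$ and lies in $\Phi(\B)^+$.

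Next I would prove $(\ast)$ by induction on $\ell(w)$, carrying both implications at once; $\ell(w) = 0$ is clear. The implication ``$\Rightarrow$'' is easy: if $\ell(ws) < \ell(w)$, write $w = w's$ with $\ell(w') = \ell(w)-1$; then $\ell(w's) > \ell(w')$, so the induction hypothesis for $w'$ forces $w'(\alpha_s) \notin \Phi(\B)^-$, hence $w'(\alpha_s) \in \Phi(\B)^+$, hence $w(\alpha_s) = -w'(\alpha_s) \in \Phi(\B)^-$. For ``$\Leftarrow$'' it suffices, since $\ell(ws) \neq \ell(w)$, to show that $\ell(ws) > \ell(w)$ implies $w(\alpha_s) \in \Phi(\B)^+$. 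Assume $\ell(w) \ge 1$ and pick a right descent $t$ of $w$; then $t \neq s$. Let $W_{s,t}$ be the standard parabolic subgroup generated by $s$ and $t$, and write $w = u\,v$ with $v \in W_{s,t}$ and $u$ the minimal-length element of the coset $wW_{s,t}$, so that $\ell(uv') = \ell(u) + \ell_{s,t}(v')$ for all $v' \in W_{s,t}$, where $\ell_{s,t}$ is the length in $W_{s,t}$. Then $\ell(u) < \ell(w)$ and $\ell(ur) > \ell(u)$ for $r \in \{s,t\}$, so the induction hypothesis applied to $u$ gives $u(\alpha_s), u(\alpha_t) \in \Phi(\B)^+$; and $\ell_{s,t}(vs) > \ell_{s,t}(v)$ (this comparison agrees for $w$ and for $v$ by the length additivity above), so the rank-two case of $(\ast)$ — proved separately — shows that $v(\alpha_s)$ is a positive root of the restriction of $\B$ to $V_{s,t}$, say $v(\alpha_s) = p\,\alpha_s + q\,\alpha_t$ with $p,q \ge 0$. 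Therefore
\[
w(\alpha_s) = u\bigl(v(\alpha_s)\bigr) = p\,u(\alpha_s) + q\,u(\alpha_t)
\]
has all coordinates $\ge 0$ and, being an element of $\Phi(\B) = \Phi(\B)^+ \sqcup \Phi(\B)^-$, lies in $\Phi(\B)^+$, as required.

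The remaining task, which I expect to be the main obstacle, is the rank-two case of $(\ast)$: here $S = \{s,t\}$ and $W(\B)$ is dihedral of order $2m_{s,t}$. This is the one genuinely computational point — the reduction in the previous paragraph feeds on it and is circular without it — and it is carried out by an explicit two-dimensional calculation: using the matrices of $\sigma_s$ and $\sigma_t$ in the basis $\{\alpha_s,\alpha_t\}$, the sign constraints $\varphi_s(\alpha_t),\varphi_t(\alpha_s) \le 0$ above, and the relation tying $\varphi_s(\alpha_t)\,\varphi_t(\alpha_s)$ to $m_{s,t}$ (the order of $\sigma_s\sigma_t$), one checks that the successive images of $\alpha_s$ and $\alpha_t$ under the alternating words in $\sigma_s,\sigma_t$ enumerate the positive roots of $\B$, with $v(\alpha_r)$ ($r\in\{s,t\}$) positive precisely when $\ell_{s,t}(vr) > \ell_{s,t}(v)$; a small additional argument using the positivity axiom disposes of the case $m_{s,t} = \infty$. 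Everything else is formal Coxeter combinatorics — the length function, descents, and minimal coset representatives with length additivity — together with one-line consequences of the root-basis axiom $\Phi(\B) = \Phi(\B)^+ \sqcup \Phi(\B)^-$.
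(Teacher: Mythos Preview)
The paper does not give a proof of Theorem~\ref{thm2_1}; it is quoted as a result of H\'ee with a citation to \cite{Hee2}, so there is no argument in the paper to compare yours against. Your outline is the standard Tits-type proof of faithfulness, adapted to the axiomatics of a root basis where no invariant bilinear form is assumed, and this is indeed the line of argument one finds in H\'ee's work.

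On the substance, your reduction to the statement $(\ast)$ and the inductive step via minimal coset representatives in $wW_{\{s,t\}}$ are correct and cleanly stated. Two small points are worth tightening. First, the claim that the restriction of $\B$ to $V_{s,t}$ is again a root basis with the \emph{same} dihedral parameter $m_{s,t}$ deserves a word: since $\sigma_s$ and $\sigma_t$ act trivially on $V/V_{s,t}$, the order of $\sigma_s\sigma_t$ on $V$ coincides with its order on $V_{s,t}$, so the rank-two Coxeter group you get really is the parabolic $W_{\{s,t\}}$. Second, the rank-two computation you flag as ``the main obstacle'' is genuinely the heart of the matter and is not quite as formulaic as ``the relation tying $\varphi_s(\alpha_t)\,\varphi_t(\alpha_s)$ to $m_{s,t}$'' suggests: in a general root basis there is no invariant form, so the product $\varphi_s(\alpha_t)\varphi_t(\alpha_s)$ is constrained only through the order of $\sigma_s\sigma_t$ on $V_{s,t}$, and the positivity assertion for alternating words must be checked directly from the recursion $v\mapsto \sigma_r v$ together with the sign constraints $\varphi_s(\alpha_t),\varphi_t(\alpha_s)\le 0$ and the root-basis axiom. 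You acknowledge this honestly; just be aware that a complete write-up would spell out that two-dimensional induction (and the separate treatment when $m_{s,t}=\infty$) rather than invoke a single numerical identity.
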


Let $(W,S)$ be a Coxeter system.
The word length of an element $w \in W$ with respect to $S$ will be denoted by $\lg (w) = \lg_S(w)$.
The set of \emph{reflections} of $(W,S)$ is defined to be $R=\{ wsw^{-1} \mid w \in W \text{ and } s \in S\}$.
The following explains why the basis $\BB = \{ e_\beta \mid \beta \in \Phi^+ \}$ of our vector space $E$ will depend only on the Coxeter graph (or on the Coxeter system) and not on the root system.

\begin{thm}[H\'ee \cite{Hee2}]\label{thm2_2}
Let $\B = (S,V,\Pi, \RR)$ be a reduced root basis, let $W=W(\B)$, and let $\Phi = \Phi (\B)$.
Let $R$ be the set of reflections of $(W,S)$.
Let $\beta \in \Phi$.
Let $w \in W$ and $s \in S$ such that $w (\alpha_s) = \beta$.
We have $\beta \in \Phi^+$ if and only if $\lg (ws) > \lg (w)$.
In that case the element $\varpi (\beta) = wsw^{-1} \in R$ does not depend on the choice of $w$ and $s$. 
Moreover, the map $\varpi : \Phi^+ \to R$ defined in this way is a one-to-one correspondence.
\end{thm}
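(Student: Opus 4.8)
I would derive the theorem from two facts about $W$ and $\Phi$. The first is the \emph{length--root lemma}: for all $w\in W$ and $s\in S$, one has $\lg(ws)>\lg(w)$ if and only if $w(\alpha_s)\in\Phi^+$. The second is the rigidity statement
\[
(\star)\qquad u\in W,\ s,s'\in S,\ u(\alpha_{s'})=\alpha_s\ \Longrightarrow\ us'u^{-1}=s\,.
\]
Granting these, the theorem follows readily. Whether $\beta=w(\alpha_s)$ lies in $\Phi^+$ is a property of $\beta$ alone, since $\B$ is a root basis and hence $\Phi=\Phi^+\sqcup\Phi^-$ with $\Phi^-=-\Phi^+$; so the length--root lemma is exactly the asserted equivalence, and it shows in passing that the sign of $\lg(ws)-\lg(w)$ does not depend on the choice of $w$ and $s$ with $w(\alpha_s)=\beta$. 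Next, $\varpi$ is well defined: if $w(\alpha_s)=w'(\alpha_{s'})=\beta$, then $u:=w^{-1}w'$ satisfies $u(\alpha_{s'})=\alpha_s$, so $(\star)$ gives $us'u^{-1}=s$, i.e.\ $wsw^{-1}=w's'(w')^{-1}$. The map $\varpi$ is surjective: given $t=wsw^{-1}\in R$, exactly one of $\pm w(\alpha_s)$ lies in $\Phi^+$, say $\beta$, and then $\varpi(\beta)=t$. It is injective: if $\varpi(\beta)=\varpi(\beta')=:t$, then, through $f$, the element $t$ acts on $V$ as the linear reflection $f(w)\sigma_s f(w)^{-1}$, whose $(-1)$-eigenline is $\R\beta$; since $t$ also negates $\beta'=w'(\alpha_{s'})$, we get $\beta'\in\R\beta$, and as $\Phi$ is $W$-stable with $\R\alpha_s\cap\Phi=\{\pm\alpha_s\}$ by reducedness, $\beta'\in\R\beta\cap\Phi=\{\pm\beta\}$; both being positive, $\beta'=\beta$.

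So it remains to prove the length--root lemma and $(\star)$. These feed into each other, so I would establish them together, the base cases being the Coxeter systems with $|S|\le 2$ (where $W$ is finite or infinite dihedral): there $\Phi$ is described by finitely many parameters and both statements follow by direct inspection. For $|S|>2$ one argues by induction on $\lg(w)$, after the elementary remark that $\sigma_s$ maps $\Phi^+\setminus\{\alpha_s\}$ into $\Phi^+$ (if $\beta\in\Phi^+$ with $\sigma_s(\beta)\in\Phi^-$, comparing $\Pi$-coordinates forces every coordinate of $\beta$ other than that of $\alpha_s$ to vanish, so $\beta\in\R\alpha_s$, hence $\beta=\alpha_s$ by reducedness). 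For the implication $\lg(ws)>\lg(w)\Rightarrow w(\alpha_s)\in\Phi^+$ with $\lg(w)\ge 1$, write $w=rw'$ with $r$ a left descent; a length count excludes $\lg(w's)<\lg(w')$, so $\lg(w's)>\lg(w')$ and the inductive hypothesis gives $w'(\alpha_s)\in\Phi^+$; then $w(\alpha_s)=\sigma_r(w'(\alpha_s))\in\Phi^+$ by the remark, unless $w'(\alpha_s)=\alpha_r$ — and this last possibility is excluded, for $(\star)$ applied to the shorter element $w'$ would then give $w's(w')^{-1}=r$, hence $ws=w'$ and $\lg(ws)<\lg(w)$, a contradiction. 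The remaining implications of the length--root lemma follow formally.

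The statement $(\star)$ is the heart of the matter and, I expect, the main obstacle: it expresses a genuine rigidity of the root-basis structure — a group element carrying one simple root to another must conjugate the two corresponding reflections — which fails at the purely linear-algebraic level, so that the root-basis axiom is used essentially; and the circular-looking interdependence with the length--root lemma is what forces the combined induction. To prove $(\star)$ with $\lg(u)\ge 1$: pick a right descent $r$ of $u$; then $r\neq s'$, since otherwise $\lg(us')<\lg(u)$ would force $u(\alpha_{s'})\in\Phi^-$ by the length--root lemma. If $u\in W_{\{r,s'\}}$ we are in a base case; otherwise write $u=c\,d$ with $d\in W_{\{r,s'\}}$ and $c$ the minimal-length representative of the coset $uW_{\{r,s'\}}$, so that $\lg(u)=\lg(c)+\lg(d)$, $d\neq 1$, and $\lg(c)<\lg(u)$. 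Since $\lg(cr)>\lg(c)$ and $\lg(cs')>\lg(c)$, the length--root lemma gives $c(\alpha_r),c(\alpha_{s'})\in\Phi^+$, and therefore $c$ carries every positive root of the rank-$\le 2$ subsystem $\Phi_{\{r,s'\}}=\Phi\cap(\R\alpha_r\oplus\R\alpha_{s'})$ into $\Phi^+$, because a nonnegative combination of $c(\alpha_r)$ and $c(\alpha_{s'})$ is a root with nonnegative $\Pi$-coordinates. Hence $d(\alpha_{s'})$ cannot be a negative root of $\Phi_{\{r,s'\}}$ (its image $\alpha_s=c(d(\alpha_{s'}))$ would then be negative); writing the positive root $d(\alpha_{s'})=a\alpha_r+b\alpha_{s'}$, the identity $\alpha_s=a\,c(\alpha_r)+b\,c(\alpha_{s'})$ and reducedness force $a=0$ or $b=0$ — if both were positive, comparing coordinates would give $c(\alpha_r)=c(\alpha_{s'})=\alpha_s$, hence $r=s'$ — so $d(\alpha_{s'})=\alpha_x$ for some $x\in\{r,s'\}$. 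Then $c(\alpha_x)=\alpha_s$, so the inductive hypothesis applied to $c$ gives $cxc^{-1}=s$, while $ds'd^{-1}=x$ by the dihedral case of $(\star)$ applied to $d\in W_{\{r,s'\}}$; combining, $us'u^{-1}=c\,(ds'd^{-1})\,c^{-1}=cxc^{-1}=s$, which closes the induction.
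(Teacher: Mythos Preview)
The paper does not give its own proof of this theorem: it is quoted from H\'ee~\cite{Hee2}, and the paragraph preceding Theorem~\ref{thm2_1} explicitly defers all proofs in that subsection to that source. So there is no in-paper argument to compare against; what you have produced is a reconstruction of the kind of proof one finds in the cited reference.

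Your outline is sound and is essentially the route one must take in this generality. The reduction to the length--root lemma together with the rigidity statement~$(\star)$ is correct, and an interlocked induction on length is the right mechanism precisely because, for a general reduced root basis, no $W$-invariant bilinear form is available to pin down a reflection by its $(-1)$-eigenline alone --- so~$(\star)$ cannot be read off from linear algebra. The coset decomposition $u=cd$ with $d\in W_{\{r,s'\}}$ and $c$ the minimal coset representative, followed by the observation that $d(\alpha_{s'})$ is forced to be a simple root of the rank-two subsystem (your coordinate comparison is exactly Lemma~\ref{lem2_7} plus reducedness), is the device that makes the induction close. One point worth making explicit: the appeals to the length--root lemma inside the proof of~$(\star)$ at length~$n$ --- to exclude $r=s'$ and to place $c(\alpha_r),c(\alpha_{s'})$ in $\Phi^+$ --- involve only elements of length strictly less than~$n$, so the mutual dependence is genuinely stratified by length and not circular. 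The one place where your write-up is thin is the dihedral base case: for an arbitrary reduced root basis with $|S|=2$ the two reflections $\sigma_r,\sigma_{s'}$ are governed by two independent parameters rather than the single canonical $-2\cos(\pi/m)$, and ``direct inspection'' then means an honest analysis of the resulting two-dimensional system, which is where the axiom $\Phi=\Phi^+\sqcup\Phi^-$ does real work. This is not a flaw in your strategy, but it is more than a throwaway line.
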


Let $\Gamma$ be a Coxeter graph and let $(W,S)$ be its associated Coxeter system.
For $X \subset S$ we denote by $\Gamma_X$ the full subgraph of $\Gamma$ spanned by $X$ and by $W_X$ the subgroup of $W$ generated by $X$.
By Bourbaki \cite{Bourb1}, $(W_X,X)$ is the Coxeter system of $\Gamma_X$.
If $\B = (S,V,\Pi, \RR)$ is a reduced root basis, then we denote by $V_X$ the vector subspace of $V$ spanned by $\Pi_X = \{ \alpha_s \mid s \in X \}$ and we set $\RR_X = \{ \sigma_s|_{V_X} \mid s \in X\}$.

\begin{prop}[H\'ee \cite{Hee2}]\label{prop2_3}
Let $\B = (S,V,\Pi, \RR)$ be a reduced root basis and let $X \subset S$.
Let $W=W(\B)$.
Then $\sigma_s (V_X) = V_X$ for all $s \in X$, the quadruple $\B_X = (X,V_X, \Pi_X, \RR_X)$ is a reduced root basis, $\Phi(\B_X) = \Phi(\B) \cap V_X$, and $W_X = W(\B_X)$.
\end{prop}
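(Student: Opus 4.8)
The plan is to establish the four assertions roughly in the order stated, the axiom making everything work being the defining property of a root basis, $\Phi(\B)=\Phi(\B)^+\sqcup\Phi(\B)^-$, used through Theorem \ref{thm2_2}. I would first dispatch the soft points. Each $\sigma_s$ is a linear reflection with $(-1)$-eigenvector $\alpha_s$, so it has the form $\sigma_s(x)=x-\varphi_s(x)\,\alpha_s$ for a linear form $\varphi_s$ with $\varphi_s(\alpha_s)=2$; when $s\in X$ we have $\alpha_s\in V_X$, hence $\sigma_s(V_X)\subseteq V_X$, and equality follows from $\sigma_s^2=\id$. Since the $(-1)$-eigenspace of $\sigma_s|_{V_X}$ is $V_X\cap\R\alpha_s=\R\alpha_s$, the restriction $\sigma_s|_{V_X}$ is again a reflection and $\B_X$ is a root prebasis. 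Now $\langle\sigma_s|_{V_X}\mid s\in X\rangle$ is at once the image of the representation of $W(\B_X)$ on $V_X$ and the image $\rho(W_X)$ of the $W_X$-action on $V_X$ obtained by restriction, so $\Phi(\B_X)=\{w(\alpha_s)\mid w\in W_X,\ s\in X\}\subseteq\Phi(\B)\cap V_X$. A vector of $V_X$ has $\Pi$-support contained in $X$, so a root of $\Phi(\B_X)$ lies in $\Phi(\B_X)^{\pm}$ if and only if it lies in $\Phi(\B)^{\pm}$; intersecting $\Phi(\B)=\Phi(\B)^+\sqcup\Phi(\B)^-$ with $\Phi(\B_X)$ then shows $\B_X$ is a root basis, reduced because $\R\alpha_s\cap\Phi(\B_X)\subseteq\R\alpha_s\cap\Phi(\B)=\{\pm\alpha_s\}$ and $\pm\alpha_s\in\Phi(\B_X)$. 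This disposes of the first assertion, of the claim that $\B_X$ is a reduced root basis, and of the inclusion $\Phi(\B_X)\subseteq\Phi(\B)\cap V_X$.

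The substance is in the remaining two points: $W_X=W(\B_X)$ and the reverse inclusion $\Phi(\B)\cap V_X\subseteq\Phi(\B_X)$. For the first, I would prove that $\rho\colon W_X\to\GL(V_X)$ is faithful; then $\sigma_s|_{V_X}\sigma_t|_{V_X}=\rho(st)$ has the same order as $st$ in $W_X$, namely $m_{s,t}$, so the Coxeter matrix of $\B_X$ equals $(m_{s,t})_{s,t\in X}$, which by Bourbaki is also the Coxeter matrix of the parabolic subgroup $W_X$, and $W(\B_X)=W_X$ follows. To see that $\rho$ is faithful, take $1\ne w\in W_X$, choose a reduced expression $w=s_1\cdots s_\ell$ in $(W,S)$ — all of whose letters lie in $X$, by a standard property of standard parabolic subgroups — and observe that $(s_1\cdots s_{\ell-1})(\alpha_{s_\ell})\in\Phi(\B)^+$ by Theorem \ref{thm2_2}, hence $w(\alpha_{s_\ell})=-(s_1\cdots s_{\ell-1})(\alpha_{s_\ell})\in\Phi(\B)^-$; disjointness of $\Phi(\B)^+$ and $\Phi(\B)^-$ forces $w$ to move $\alpha_{s_\ell}\in V_X$, so $\rho(w)\ne\id$. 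For the reverse inclusion I would induct on depth: given $\beta\in\Phi(\B)\cap V_X$, I may assume $\beta\in\Phi(\B)^+$ (replace $\beta$ by $-\beta$, which preserves both $V_X$ and $W_X\cdot\Pi_X$); if $\beta$ has minimal depth it is a simple root $\alpha_v$ with $v\in X$ since $\beta\in V_X$; otherwise pick $u\in S$ with $\sigma_u(\beta)$ of strictly smaller depth, so that $u\in\Supp(\beta)$ and $\sigma_u(\beta)\in\Phi(\B)^+$ by the standard combinatorics of depth, whence $u\in X$, $\sigma_u(\beta)\in\Phi(\B)^+\cap V_X$, $\sigma_u(\beta)\in W_X\cdot\Pi_X$ by induction, and $\beta=\sigma_u|_{V_X}\bigl(\sigma_u(\beta)\bigr)\in W_X\cdot\Pi_X$.

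Thus the entire content of the proposition sits in these last two points, and I expect them to be the real obstacle — not because any single step is long, but because both genuinely use that $\B$ is a root basis: it is the disjointness $\Phi(\B)^+\cap\Phi(\B)^-=\emptyset$, fed through the length/positivity criterion of Theorem \ref{thm2_2} and the standard combinatorics of roots and depth recalled in H\'ee \cite{Hee2}, that prevents the rank-two parabolics from collapsing under restriction to $V_X$ and prevents $V_X$ from meeting $\Phi(\B)$ outside the orbit $W_X\cdot\Pi_X$. For an arbitrary root prebasis both failures can occur, so no softer argument is available.
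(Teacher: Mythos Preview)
The paper does not prove this proposition; it is stated with attribution to H\'ee \cite{Hee2} and the reader is referred there for a proof, so there is no in-paper argument to compare yours against.

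Your argument is correct. The one step that deserves a word of caution is the assertion, in the depth induction for the inclusion $\Phi(\B)\cap V_X\subseteq\Phi(\B_X)$, that the simple reflection $u$ which lowers the depth of $\beta$ must lie in $\Supp(\beta)$. In the canonical setting this is immediate from the Brink--Howlett characterisation $\text{dp}(u\beta)<\text{dp}(\beta)\Leftrightarrow\langle\alpha_u,\beta\rangle>0$ together with $\langle\alpha_u,\alpha_t\rangle\le 0$ for $t\ne u$; but for a general H\'ee root basis there is no bilinear form, so one needs a different justification. One that works: the identity $\text{dp}(\gamma)=\tfrac12(\lg(r_\gamma)+1)$ gives $\lg(u\,r_\beta\,u)<\lg(r_\beta)$, hence $\lg(u\,r_\beta)<\lg(r_\beta)$, hence $\alpha_u\in N(r_\beta)=\{\gamma\in\Phi^+:r_\beta(\gamma)\in\Phi^-\}$; and every $\gamma\in N(r_\beta)$ has $\Supp(\gamma)\subseteq\Supp(\beta)$, since $r_\beta(\gamma)-\gamma\in\R\beta$ forces the $t$-coordinate of $\gamma$ to be simultaneously $\ge 0$ and $\le 0$ for each $t\notin\Supp(\beta)$. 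So your appeal to ``standard combinatorics of depth'' is ultimately justified, but in the generality of H\'ee's root bases this is the step that actually carries the weight, and it would be worth making the reasoning explicit rather than leaving it as a pointer.
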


The following is proved in Bourbaki \cite{Bourb1} and is crucial in many works on Coxeter groups. 
It is important to us as well.

\begin{prop}[Bourbaki  \cite{Bourb1}]\label{prop2_4}
The following conditions on an element $w_0 \in W$ are equivalent.
\begin{itemize}
\item[(i)]
For all  $u \in W$ we have $\lg(w_0) = \lg(u) + \lg(u^{-1}w_0)$.
\item[(ii)]
For all $s \in S$ we have $\lg(sw_0)<\lg(w_0)$.
\end{itemize}
Moreover, such an element $w_0$ exists if and only $W$ is finite. 
If $w_0$ satisfies (i) and/or (ii) then $w_0$ is unique, $w_0$ is involutive (i.e. $w_0^2=1$), and $w_0 S w_0 =S$.
\end{prop}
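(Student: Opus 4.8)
This is the classical theory of the longest element $w_0$ of a finite Coxeter group, proved in Bourbaki~\cite{Bourb1}; the plan is to reorganise that argument in the root-system language of Subsection~\ref{subsec2_1}. Fix the canonical root basis $\B=(S,V,\Pi,\RR)$ of $\Gamma$, so that $\Phi=\Phi^+\sqcup\Phi^-$ and, by Theorem~\ref{thm2_1}, the representation $f\colon W\to\GL(V)$ is faithful and permutes $\Phi=W\Pi$. For $w\in W$ put
\[
N(w)=\{\,\beta\in\Phi^+\mid w^{-1}(\beta)\in\Phi^-\,\}.
\]
I will use the following standard facts (cf. Bourbaki~\cite{Bourb1}, Humphreys~\cite{Humph1}; they refine to Theorem~\ref{thm2_2}): multiplying by a generator changes the length by exactly $\pm1$; for $s\in S$ and $w\in W$ one has $\lg(sw)<\lg(w)$ iff $w^{-1}(\alpha_s)\in\Phi^-$, i.e. iff $\alpha_s\in N(w)$; one has $|N(w)|=\lg(w)$, so $N(w)$ is always finite; and if $N(u)\subseteq N(w)$ then $\lg(w)=\lg(u)+\lg(u^{-1}w)$, whence also $w\mapsto N(w)$ is injective (symmetrise this last implication).

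The implication (i)$\Rightarrow$(ii) is immediate: taking $u=s\in S$ in (i) gives $\lg(w_0)=1+\lg(sw_0)$. Everything else reduces to the following \emph{Key Lemma}: if $w_0$ satisfies (ii), equivalently $\Pi\subseteq N(w_0)$, then $N(w_0)=\Phi^+$ (equivalently $w_0$ sends every positive root to a negative one). Observe that (ii) already forces $|S|\le|N(w_0)|=\lg(w_0)<\infty$, so $S$ is automatically finite and the Key Lemma only has to be established in the classical finite-rank setting. I would prove it exactly as in Bourbaki, by a descent/height induction on positive roots: the simple roots lie in $N(w_0)$ by (ii), and a non-simple positive root can be lowered by a suitable simple reflection while staying positive, so one brings it down to $\Pi$.

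Granting the Key Lemma, the rest is bookkeeping. If $w_0$ satisfies (ii) then $|\Phi^+|=|N(w_0)|=\lg(w_0)<\infty$, hence $\Phi$ is finite (pair $\beta$ with $-\beta$) and therefore $W$ is finite, since it acts faithfully on the finite set $\Phi$. Conversely, if $W$ is finite then $\Phi$ is finite, an element $w_0$ of maximal length exists, and for each $s\in S$ one has $\lg(sw_0)\ne\lg(w_0)$ (opposite parity) and $\lg(sw_0)\not>\lg(w_0)$ (maximality), so $\lg(sw_0)<\lg(w_0)$; thus $w_0$ satisfies (ii). This proves the existence clause. For (ii)$\Rightarrow$(i): if $w_0$ satisfies (ii) then $N(w_0)=\Phi^+$, so $N(u)\subseteq N(w_0)$ for every $u\in W$, and the fourth standard fact above gives (i). Uniqueness: two elements satisfying (ii) both have inversion set $\Phi^+$, hence are equal. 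Involution: $w_0$ maps $\Phi^+$ injectively into $\Phi^-$, hence bijectively onto $\Phi^-$ by finiteness, so $w_0^{-1}$ does as well, i.e. $w_0^{-1}$ also satisfies (ii), so $w_0^{-1}=w_0$ by uniqueness. Finally $w_0Sw_0=S$: taking $u=w_0x^{-1}$ in (i) and using $w_0^{-1}=w_0$ gives $\lg(xw_0)=\lg(w_0)-\lg(x)$ for all $x\in W$; fixing $s\in S$ and setting $s'=w_0sw_0$ one has $s'w_0=w_0s$, hence $\lg(w_0)-\lg(s')=\lg(s'w_0)=\lg(w_0s)=\lg(w_0)-1$ (the last equality because $s$ is a right descent of $w_0$, by involution), so $\lg(s')=1$ and $s'\in S$; thus $w_0Sw_0\subseteq S$, and equality holds since $w_0^2=1$ and $S$ is finite.

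The only genuinely non-formal input is the Key Lemma, i.e. that an element with full left-descent set inverts every positive root; I expect that — together with having all the cited standard facts about $N(w)$ available in the generality of Subsection~\ref{subsec2_1} — to be the main point of the argument, even though it uses no idea beyond Bourbaki~\cite{Bourb1}.
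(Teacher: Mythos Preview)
Your argument is correct and is essentially the standard Bourbaki proof recast in the inversion-set language; all the steps (the Key Lemma via height induction, the existence/uniqueness/involution bookkeeping, and the $\lg(xw_0)=\lg(w_0)-\lg(x)$ trick for $w_0Sw_0=S$) go through as written. Note, however, that the paper does not supply its own proof of Proposition~\ref{prop2_4}: it is stated as a citation from Bourbaki~\cite{Bourb1} and used as a black box, so there is no in-paper argument to compare against.
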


When $W$ is finite the element $w_0$ in Proposition \ref{prop2_4} is called \emph{the longest element} of $W$.


\subsection{Symmetries}\label{subsec2_2}

Recall that a \emph{symmetry} of a Coxeter graph $\Gamma$ associated with a Coxeter matrix $M = (m_{s,t})_{s,t \in S}$ is a permutation $g$ of $S$ satisfying $m_{g(s),g(t)} = m_{s,t}$ for all $s,t \in S$.
Recall also that $\Sym (\Gamma)$ denotes the group of all symmetries of $\Gamma$.
Let $G$ be a subgroup of $\Sym(\Gamma)$.
We denote by $\OO$ the set of orbits of $S$ under the action of $G$.
Two subsets of $\OO$ will play a special role.
First, the set $\OO_\fin$ consisting of finite orbits.
Then, the subset $\SS \subset \OO_\fin$ formed by the orbits $X \in \OO_\fin$ such that $W_X$ is finite.
For $X \in \SS$ we denote by $u_X$ the longest element of $W_X$ (see Proposition \ref{prop2_4}) and, for $X,Y \in \SS$, we denote by $\hat m_{X,Y}$ the order of $u_X u_Y$.
Note that $\hat M = (\hat m_{X,Y})_{X,Y \in \SS}$ is a Coxeter matrix.
Its Coxeter graph is denoted by $\hat \Gamma = \hat \Gamma^G$.
Finally, we denote by $W^G$ the subgroup of $W$ of fixed elements under the action of $G$.

\begin{thm}[ H\'ee \cite{Hee2}, M\"uhlherr \cite{Muhlh1}]\label{thm2_5}
Let $\Gamma$ be a Coxeter graph, let $G$ be a group of symmetries of $\Gamma$, and let $(W,S)$ be the Coxeter system associated with $\Gamma$.
Then $W^G$ is generated by $\SS_W = \{u_X \mid X \in \SS \}$ and $(W^G, \SS_W)$ is a Coxeter system associated with $\hat \Gamma$.
\end{thm}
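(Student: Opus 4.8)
The plan is to recover this statement (which is the main theorem of H\'ee \cite{Hee2} and M\"uhlherr \cite{Muhlh1}) by combining the geometry of the canonical root basis of $\Gamma$ with the length function on $W$. Fix the canonical root basis $\B = (S,V,\Pi,\RR)$ of $\Gamma$, so $W = W_\Gamma$ acts faithfully on $V$ by Theorem \ref{thm2_1}, and the positive system $\Phi^+ = \Phi(\B)^+$ is $G$-invariant because $G$ permutes $\Pi$ and preserves $\langle .,.\rangle$. The first step is to check that each $u_X$ ($X \in \SS$) actually lies in $W^G$: since $g(X)=X$ for $g \in G$ and $g$ induces an automorphism of the Coxeter system $(W_X,X)$, it sends the longest element $u_X$ of $W_X$ to the longest element of $W_X$, hence fixes it by uniqueness in Proposition \ref{prop2_4}. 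So $\SS_W \subseteq W^G$, and the subgroup $\langle \SS_W\rangle$ is well-defined inside $W^G$.

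The second and central step is to show $W^G = \langle \SS_W\rangle$ together with the matching of defining relations. For this I would run the standard ``deletion/exchange'' argument relative to the generating set $\SS_W$. Given $w \in W^G$, $w \neq 1$, pick $s \in S$ with $\lg(sw) < \lg(w)$; then $w(\alpha_s) \in \Phi^-$, and because $w$ is $G$-fixed the whole orbit $\{w(\alpha_{g(s)}) = w(\alpha_s)^g \mid g\in G\}$... more precisely $w\alpha_{g(s)} = g(w\alpha_s)\in\Phi^-$ for all $g$, so every simple root in the orbit $X = G\cdot s$ is sent to a negative root. The key geometric lemma (from \cite{Hee2}) is that this forces $X$ to lie in $\SS$ (i.e. $W_X$ finite) and $\lg(u_X w) = \lg(w) - \lg(u_X)$, so that left-multiplication by $u_X$ strictly decreases length. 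Iterating writes $w$ as a product of elements of $\SS_W$ with lengths adding up, proving both generation and that $(W^G,\SS_W)$ satisfies the exchange condition, hence is a Coxeter system. Finally one identifies its Coxeter matrix: for $X,Y\in\SS$ the dihedral subgroup $\langle u_X,u_Y\rangle$ sits inside $W_{X\cup Y}$, and a direct analysis of the rank-two (parabolic) situation $W_{X\cup Y}$ — using that $u_Xu_Y$ has a well-defined order — shows this order is exactly $\hat m_{X,Y}$, matching $\hat M$ and hence $\hat\Gamma$.

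The main obstacle is the geometric lemma in the middle step: that the simultaneous negativity of all simple roots over an orbit $X$ forces $W_X$ to be finite and forces the ``descent by $u_X$'' with additive lengths. This is where the choice of H\'ee's general root-basis framework pays off (the Deodhar/Humphreys root system is not enough, as noted in the excerpt), and it requires a careful argument about how $w$ acts on the parabolic subsystem $\Phi(\B_X) = \Phi(\B)\cap V_X$ (Proposition \ref{prop2_3}): one shows $w$ maps $\Phi(\B_X)^+$ into $\Phi^-$, which for an infinite $W_X$ is impossible and for finite $W_X$ pins down the component of $w$ relative to $W_X$ to be $u_X$. Everything else — the $G$-invariance bookkeeping, the dihedral computation of $\hat m_{X,Y}$, and assembling the exchange condition — is routine once that lemma is in hand; I would cite \cite{Hee2} for its proof rather than reproduce it.
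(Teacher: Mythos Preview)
The paper does not give its own proof of Theorem~\ref{thm2_5}; it is stated as a result of H\'ee~\cite{Hee2} and M\"uhlherr~\cite{Muhlh1} and then used as a black box. So there is nothing in the paper to compare your plan against.

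That said, your outline is the standard descent argument and is essentially correct, with one slip to fix: from $\lg(sw)<\lg(w)$ one gets $w^{-1}(\alpha_s)\in\Phi^-$, not $w(\alpha_s)\in\Phi^-$; the latter is the condition $\lg(ws)<\lg(w)$. Since you want to strip off $u_X$ on the left, either keep $\lg(sw)<\lg(w)$ and run the root computation with $w^{-1}$, or switch consistently to right descents and right multiplication by $u_X$. Once that is straightened out, your ``geometric lemma'' is fine and in fact does not need any special feature of H\'ee's framework: if every $\alpha_t$ with $t\in X$ is sent into $\Phi^-$, then by convexity of the negative cone all of $\Phi_X^+$ is sent into $\Phi^-$, so $|\Phi_X^+|\le\lg(w)<\infty$ and $W_X$ is finite; the coset decomposition relative to $W_X$ together with Proposition~\ref{prop2_4} then forces the $W_X$-part of $w$ to be $u_X$ with additive lengths, exactly as you say.
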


Take a root basis $\B = (S,V,\Pi, \RR)$ such that $W=W(\B)$.
The action of $G$ on $S$ induces an action of $G$ on $V$ defined by $g(\alpha_s) = \alpha_{g(s)}$ for all $s \in S$ and $g \in G$.
We say that $\B$ is \emph{symmetric} with respect to $G$ if $\sigma_{g(s)} = g \sigma_s g^{-1}$ for all $s \in S$ and $g \in G$.
Note that the canonical root basis is symmetric whatever is $G$.
Suppose that $\B$ is symmetric with respect to $G$.
Then the linear representation $f : W \to \GL (V)$ associated with $\B$ is equivariant in the sense that $f(g(w)) = g\, f(w)\, g^{-1}$ for all $w \in W$ and $g \in G$.
So, it induces a linear representation $f^G: W^G \to \GL (V^G)$, where $V^G = \{ x \in V \mid g(x) = x \text{ for all } g \in G \}$.

For each $X \in \SS$ we set $\hat \alpha_X = \sum_{s \in X} \alpha_s$ and we denote by $\hat V$ the vector subspace of $V$ spanned by $\hat \Pi = \{ \hat \alpha_X \mid X \in \SS \}$.
We have $\hat V \subset V^G$ but we have no equality in general.
However, we have $f^G(w)(\hat V) = \hat V$ for all $w \in W^G$.
We set $\hat \sigma_X = f^G(u_X)|_{\hat V}$ for all $X \in \SS$ and $\hat \RR = \{\hat \sigma_X \mid X \in \SS\}$.

\begin{thm}[Hée \cite{Hee2}]\label{thm2_6}
The quadruple $\hat \B = (\SS, \hat V, \hat \Pi, \hat \RR)$ is a root basis which is reduced if $\B$ is reduced.
Moreover, we have $W(\hat \B) = W^G$.
\end{thm}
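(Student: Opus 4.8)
The plan is to check first that $\hat\B=(\SS,\hat V,\hat\Pi,\hat\RR)$ satisfies the axioms of a root prebasis, then to isolate the one substantial point --- that the representation $W^G\to\GL(\hat V)$, $w\mapsto f^G(w)|_{\hat V}$, is \emph{faithful} --- and finally to deduce from it the Coxeter matrix of $\hat\B$, the equality $W(\hat\B)=W^G$, and the decomposition $\Phi(\hat\B)=\Phi(\hat\B)^+\sqcup\Phi(\hat\B)^-$; the reducedness is treated at the end. The vectors $\hat\alpha_X=\sum_{s\in X}\alpha_s$, $X\in\SS$, have pairwise disjoint supports in $\Pi$ (the orbits in $\SS$ are disjoint subsets of $S$), so $\hat\Pi$ is a basis of $\hat V$ in one-to-one correspondence with $\SS$. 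Since $\hat V$ is stable under $W^G$ (stated just before the theorem), $\hat\sigma_X:=f^G(u_X)|_{\hat V}$ is a well-defined element of $\GL(\hat V)$ with $\hat\sigma_X^2=\id$, as $u_X$ is involutive (Proposition \ref{prop2_4}). Each $\sigma_s$ with $s\in X$ satisfies $\sigma_s(\alpha_t)\in\alpha_t+\R\alpha_s$ for all $t$, so $\sigma_s(V_X)=V_X$, whence $u_X(V_X)=V_X$ (compare Proposition \ref{prop2_3}); since $G$ acts transitively on the finite set $X$ one has $V^G\cap V_X=\R\hat\alpha_X$, and because $u_X\in W_X\cap W^G$ we get $u_X(\hat\alpha_X)\in\R\hat\alpha_X$. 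Being the longest element of $W_X$, $u_X$ sends each $\alpha_s$ ($s\in X$) to a negative root (Proposition \ref{prop2_4}(ii)), so $u_X(\hat\alpha_X)=\sum_{s\in X}u_X(\alpha_s)$ has non-positive coordinates; together with $u_X(\hat\alpha_X)\in\R\hat\alpha_X$ and $u_X^2=1$ this forces $\hat\sigma_X(\hat\alpha_X)=-\hat\alpha_X$. Finally, for $Y\neq X$ one has $u_X(\hat\alpha_Y)-\hat\alpha_Y\in V_X$ and $u_X(\hat\alpha_Y)\in\hat V$, hence $u_X(\hat\alpha_Y)\in\hat\alpha_Y+(\hat V\cap V_X)=\hat\alpha_Y+\R\hat\alpha_X$; feeding this into a one-line computation shows that the $(-1)$-eigenspace of $\hat\sigma_X$ is exactly $\R\hat\alpha_X$, so $\hat\sigma_X$ is a linear reflection and $\hat\B$ is a root prebasis.

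For the faithfulness, let $w\in W^G$, $w\neq 1$. Pick $s\in S$ with $\lg(ws)<\lg(w)$, i.e.\ $w(\alpha_s)\in\Phi^-$, and let $X$ be the orbit of $s$. Since $w$ commutes with every element of $G$ and $G$ only permutes the coordinates, $w(\alpha_t)=g(w(\alpha_s))\in\Phi^-$ for every $t=g(s)\in X$, i.e.\ $\lg(wt)<\lg(w)$ for all $t\in X$. Writing $w=w^{X}v_0$ with $w^{X}$ the minimal-length element of $wW_X$, this forces $\lg_X(v_0t)<\lg_X(v_0)$ for all $t\in X$, so $v_0$ is the longest element of $W_X$; in particular $W_X$ is finite, hence $X$ is finite ($X$ injects into $W_X$), hence $X\in\SS$. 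Now $f^G(w)(\hat\alpha_X)=\sum_{t\in X}w(\alpha_t)$ is a nonzero sum of negative roots, so it has non-positive coordinates and is therefore different from $\hat\alpha_X$; thus $f^G(w)|_{\hat V}\neq\id$. Consequently, for $X,Y\in\SS$,
\[ \mathrm{ord}(\hat\sigma_X\hat\sigma_Y)=\mathrm{ord}\bigl(f^G(u_Xu_Y)|_{\hat V}\bigr)=\mathrm{ord}_{W^G}(u_Xu_Y)=\hat m_{X,Y}, \]
so the Coxeter matrix of $\hat\B$ is exactly $\hat M$; by Theorem \ref{thm2_5} this gives $W(\hat\B)=W(\hat M)=W^G$, and under this identification the canonical representation of $W(\hat\B)$ on $\hat V$ coincides with $w\mapsto f^G(w)|_{\hat V}$.

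It remains to see that $\hat\B$ is a root basis, reduced when $\B$ is. Every element of $\Phi(\hat\B)$ has the form $f^G(w)(\hat\alpha_X)=\sum_{s\in X}w(\alpha_s)$ with $w\in W^G$, $X\in\SS$; by the transitivity-and-commutation argument above, the roots $w(\alpha_s)$, $s\in X$, are all positive or all negative, so $\sum_{s\in X}w(\alpha_s)$ has all coordinates $\geq 0$ or all $\leq 0$, and, rewritten in the basis $\hat\Pi$, this says it lies in $\Phi(\hat\B)^+$ or in $\Phi(\hat\B)^-$; these are disjoint since the vector is nonzero. Hence $\hat\B$ is a root basis. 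Assume now that $\B$ is reduced and let $\hat\beta=\lambda\hat\alpha_X\in\Phi(\hat\B)$; we must show $\lambda=\pm1$. Write $\hat\beta=f^G(w)(\hat\alpha_Y)=\sum_{t\in Y}w(\alpha_t)$; replacing $w$ by $u_Xw$ if necessary we may assume all $w(\alpha_t)\in\Phi^+$ and $\lambda>0$. Then $\lambda\hat\alpha_X\in V_X$ with non-negative coordinates, so each $w(\alpha_t)\in\Phi(\B)\cap V_X=\Phi(\B_X)$ by Proposition \ref{prop2_3}; running the same reasoning with $w^{-1}$ and $\hat\alpha_X$ yields $wV_Y=V_X$, whence $w$ conjugates $W_Y$ onto a subgroup of the finite group $W_X$ of order $|W_Y|$, so $wW_Yw^{-1}=W_X$ and then, using $G$-equivariance together with the uniqueness of the longest element, $wu_Yw^{-1}=u_X$. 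A final analysis of the $|Y|=|X|$ distinct positive roots $w(\alpha_t)$ of the reduced finite root system $\Phi(\B_X)$, which form a single $G$-orbit and sum to $\lambda\hat\alpha_X$, forces this orbit to be $\{\alpha_s\mid s\in X\}$ and $\lambda=1$; hence $\R\hat\alpha_X\cap\Phi(\hat\B)=\{\hat\alpha_X,-\hat\alpha_X\}$ and $\hat\B$ is reduced.

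I expect the reducedness --- the last point above --- to be the most delicate part of the argument: it is the only place where the hypothesis that $\B$ is reduced is genuinely used, and it is where one must control \emph{arbitrary} positive roots of the parabolic subsystem $\Phi(\B_X)$ rather than only the simple ones (for instance, showing that no $G$-orbit in $\Phi(\B_X)^+$ other than $\{\alpha_s\mid s\in X\}$ can sum to a multiple of $\hat\alpha_X$). Everything else --- the prebasis axioms, the computation of the Coxeter matrix, the identification $W(\hat\B)=W^G$, and the positive/negative decomposition --- follows rather directly from the faithfulness of $w\mapsto f^G(w)|_{\hat V}$ and the elementary behaviour of longest elements of finite standard parabolic subgroups.
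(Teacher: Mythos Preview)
The paper does not itself prove Theorem~\ref{thm2_6}; it is quoted from H\'ee~\cite{Hee2} as a preliminary result, so there is no argument here to compare against. Your treatment of the prebasis axioms, of the faithfulness of $w\mapsto f^G(w)|_{\hat V}$, of the Coxeter-matrix identification, of $W(\hat\B)=W^G$, and of the sign decomposition is correct and well organised.

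The one genuine gap is the reducedness step, which you flag but do not carry out. The sentence ``A final analysis \ldots\ forces this orbit to be $\{\alpha_s\mid s\in X\}$ and $\lambda=1$'' is an assertion, not a proof; and the approach you sketch in parentheses---ruling out every $G$-orbit in $\Phi(\B_X)^+$ other than $\Pi_X$ that sums to a multiple of $\hat\alpha_X$---cannot work as stated, since such orbits do exist (e.g.\ when $\Gamma_X$ is $n\ge 2$ disjoint copies of $A_2$ with $G$ transitive on $X$, the orbit of highest roots $\{\alpha_{s_i}+\alpha_{t_i}\}_i$ also sums to $\hat\alpha_X$). The clean completion avoids $G$-orbits altogether. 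You already have $w(\alpha_t)\in\Phi_X^+\subset V_X^+$ for all $t\in Y$ and, symmetrically, $w^{-1}(\alpha_s)\in\Phi_Y^+\subset V_Y^+$ for all $s\in X$; linearity then gives $w(V_Y^+)=V_X^+$. These are simplicial cones whose extreme rays are spanned by $\Pi_Y$ and $\Pi_X$ respectively, so for each $t\in Y$ the root $w(\alpha_t)$ is a positive multiple of some $\alpha_{\sigma(t)}$ with $\sigma\colon Y\to X$ a bijection; reducedness of $\B$ then forces $w(\alpha_t)=\alpha_{\sigma(t)}$, and summing over $t$ gives $\lambda=1$. This is precisely the mechanism behind the paper's later Lemma~\ref{lem2_8} (with Lemma~\ref{lem2_7} standing in for the extreme-ray observation).
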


The root basis $\hat \B$ of Theorem \ref{thm2_6} will be called the \emph{equivariant root basis} of $\B/G$.


\section{Definition of $\hat \BB$}\label{newsec3}

From now on $\Gamma$ denotes a given Coxeter graph, $M = (m_{s,t})_{s,t \in S}$ denotes its Coxeter matrix, and $(W,S)$ denotes its Coxeter system. 
We take a group $G$ of symmetries of $\Gamma$ and a reduced root basis $\B = (S, V, \Pi, \RR)$ such that $W = W(\B)$.
We assume that $\B$ is symmetric with respect to $G$ and we use again the notations of Subsection \ref{subsec2_2} ($\hat \Gamma$, $\hat M = (\hat m_{X,Y})_{X,Y \in \SS}$, $W^G$, $\hat \B = (\SS, \hat V, \hat \Pi, \hat \RR)$, and so on).
We set $\Phi = \Phi (\B)$ and $\hat \Phi = \Phi (\hat \B)$.

Let $\K$ be a field. 
We denote by $E$ the vector space over $\K$ having a basis $\BB = \{ e_\alpha \mid \alpha \in \Phi^+ \}$ in one-to-one correspondence with the set $\Phi^+$ of positive roots. 
The group $G$ acts on $E$ via its action on $\Phi^+$ and we denote by $E^G$ the vector subspace of $E$ of fixed vectors under this action.
We denote by $\Omega$ the set of orbits and by $\Omega_\fin$ the set of finite orbits of $\Phi^+$ under the action of $G$.
For each $\omega \in \Omega_\fin$ we set $\hat e_\omega = \sum_{\alpha \in \omega} e_\alpha$.
It is easily shown that $\BB_0 = \{ \hat e_\omega \mid \omega \in \Omega_\fin \}$ is a basis of $E^G$.

The definition of $\hat \BB$ requires the following two lemmas. 
The proof of the first one is left to the reader.

\begin{lem}\label{lem2_7}
Set $V^+ = \sum_{s \in S} \R_+ \alpha_s$.
Let $x,y \in V^+$ and $s \in S$.
If $x + y \in \R_+ \alpha_s$, then $x,y \in \R_+ \alpha_s$.
\end{lem}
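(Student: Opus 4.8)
The plan is to reduce everything to a comparison of coordinates in the basis $\Pi = \{ \alpha_t \mid t \in S \}$ of $V$. First I would write $x = \sum_{t \in S} \lambda_t \alpha_t$ and $y = \sum_{t \in S} \mu_t \alpha_t$; by the definition of $V^+$ all the coefficients $\lambda_t$ and $\mu_t$ are $\ge 0$, and (since $V = \oplus_{t \in S} \R \alpha_t$) only finitely many of them are nonzero. Likewise write the hypothesis as $x + y = c\,\alpha_s$ with $c \ge 0$.

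Next, since $\Pi$ is a basis of $V$, the expansion of $x+y$ in $\Pi$ is unique, so $\lambda_t + \mu_t = 0$ for every $t \ne s$ and $\lambda_s + \mu_s = c$. For $t \ne s$, the relation $\lambda_t + \mu_t = 0$ together with $\lambda_t, \mu_t \ge 0$ forces $\lambda_t = \mu_t = 0$. Hence $x = \lambda_s \alpha_s$ and $y = \mu_s \alpha_s$ with $\lambda_s, \mu_s \ge 0$, that is, $x, y \in \R_+ \alpha_s$, which is the claim.

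I do not expect any genuine obstacle here: the statement is just the combination of uniqueness of coordinates in a basis with the elementary fact that a sum of two nonnegative reals can only vanish if each of them vanishes. The single point deserving a word of care is that $S$ may be infinite, so one should note at the outset that every element of $V$ has only finitely many nonzero coordinates; this keeps the coordinatewise manipulation legitimate but plays no substantive role in the argument.
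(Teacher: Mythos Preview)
Your argument is correct: writing $x$ and $y$ in the basis $\Pi$, comparing coordinates, and using that two nonnegative reals sum to zero only if each is zero is exactly the intended trivial verification. The paper itself leaves this proof to the reader, so there is nothing further to compare.
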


For $X \in \SS$ we set $\omega_X = \{ \alpha_s \mid s \in X \}$.
Note that $\omega_X \in \Omega_\fin$ for all $X \in \SS$.
More generally, we have $w (\omega_X) \in \Omega_\fin$ for all $X \in \SS$ and all $w \in W^G$.

\begin{lem}\label{lem2_8}
Let $X, X' \in \SS$ and $w,w' \in W^G$.
If $w (\hat \alpha_X) = w' (\hat \alpha_{X'})$, then $w (\omega_X) = w' (\omega_{X'})$.
\end{lem}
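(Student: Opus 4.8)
\textbf{Proof strategy for Lemma \ref{lem2_8}.}

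The plan is to reduce the equality $w(\hat\alpha_X) = w'(\hat\alpha_{X'})$ of vectors in $\hat V$ to an equality of the underlying supported sets of roots. First I would rewrite the hypothesis as $(w')^{-1}w(\hat\alpha_X) = \hat\alpha_{X'}$ and set $v = (w')^{-1}w \in W^G$, so it suffices to prove the statement when $w' = 1$, i.e.\ to show: if $v(\hat\alpha_X) = \hat\alpha_{X'}$ with $v \in W^G$ and $X, X' \in \SS$, then $v(\omega_X) = \omega_{X'}$. Expanding, $\hat\alpha_X = \sum_{s\in X}\alpha_s$, so the hypothesis reads $\sum_{s\in X} v(\alpha_s) = \sum_{t\in X'}\alpha_t$.

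The key point is a positivity argument using Lemma \ref{lem2_7}. Since $v(\hat\alpha_X) = \hat\alpha_{X'} \in V^+$ and $\hat\alpha_{X'}$ lies in the cone spanned by $\{\alpha_t : t\in X'\}$, I want to conclude first that each $v(\alpha_s)$, $s\in X$, is a positive root; this follows because $\hat\alpha_X$ is a nonnegative combination of the $\alpha_s$ and positive roots are preserved under the $W^G$-action on the relevant subspace — more carefully, one uses that $u_X(\hat\alpha_X) = -\hat\alpha_X$ together with Theorem \ref{thm2_6} to control signs, or one argues directly that $v(\alpha_s)\in\Phi^+$ since otherwise the sum $\sum_s v(\alpha_s)$ would have a strictly negative coefficient somewhere, contradicting $\hat\alpha_{X'}\in V^+$ and the disjointness $\Phi = \Phi^+\sqcup\Phi^-$ from the root basis axiom. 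Once all $v(\alpha_s)$ lie in $V^+$, I apply Lemma \ref{lem2_7} repeatedly (inducting on $|X'|$, peeling off one $\alpha_t$ at a time): the sum $\sum_{s\in X} v(\alpha_s)$ equals $\sum_{t\in X'}\alpha_t$, so for a fixed $t_0\in X'$ the vector $\alpha_{t_0}$ must absorb contributions only from those $v(\alpha_s)$ that are proportional to $\alpha_{t_0}$; since $\B$ is reduced, $\R\alpha_{t_0}\cap\Phi = \{\pm\alpha_{t_0}\}$, so any such $v(\alpha_s)$ equals $\alpha_{t_0}$ exactly. This forces a matching of $\omega_X$ with $\omega_{X'}$: every element of $v(\omega_X)$ is some $\alpha_t$ with $t\in X'$, and conversely (by comparing coefficient sums, or by symmetry applying the same argument to $v^{-1}$) every $\alpha_t$, $t\in X'$, is hit. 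Hence $v(\omega_X) = \omega_{X'}$, which is the desired conclusion, and translating back by $w'$ gives $w(\omega_X) = w'(\omega_{X'})$.

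The main obstacle I anticipate is the first half — establishing that each $v(\alpha_s)$ is a \emph{positive} root rather than a general element of $\Phi$, since a priori $v(\alpha_s)$ could be negative even though the total sum is positive. The cleanest way around this is probably to observe that within the parabolic context one can replace $v$ by $vu_X$ if necessary (noting $vu_X \in W^G$ and $u_X(\hat\alpha_X) = -\hat\alpha_X$), but this only flips the whole sum; instead the right tool is likely Lemma \ref{lem2_7} combined with the fact that a $W$-translate of a single simple root $\alpha_s$ is either entirely in $\Phi^+$ or entirely in $\Phi^-$, and the constraint that the positive parts must sum to $\hat\alpha_{X'}$ while the negative parts must cancel — but negative roots summing to a vector in $V^+$ is impossible unless there are none. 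Making this last cancellation argument rigorous (no nontrivial nonnegative combination of negative roots plus positive roots can land in $V^+$ while the negative ones are present) is the technical heart; it relies precisely on $\Phi = \Phi^+\sqcup\Phi^-$ and on examining coordinates in the basis $\Pi$.
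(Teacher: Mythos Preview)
Your reduction to $w'=1$ and the overall shape are right, but the positivity step---which you correctly flag as the crux---has a genuine gap. Your proposed fix (``negative roots summing to a vector in $V^+$ is impossible unless there are none'') is false as stated: a mixture of positive and negative roots can certainly have all coordinates nonnegative, so coordinate-chasing alone will not rule out some $v(\alpha_s)\in\Phi^-$. The missing idea is equivariance: since $X$ is a single $G$-orbit and $v\in W^G$ commutes with every $g\in G$, for any $s,s'\in X$ there is $g\in G$ with $g(\alpha_s)=\alpha_{s'}$, whence $v(\alpha_{s'})=g(v(\alpha_s))$. Because $G$ permutes $\Pi$ it preserves $\Phi^+$ and $\Phi^-$, so \emph{all} the $v(\alpha_s)$ have the same sign; as their sum $\hat\alpha_{X'}$ lies in $V^+$, they are all in $\Phi^+$ (indeed in $\Phi_{X'}^+$, since their supports must sit inside $X'$). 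This is one line once you see it, and it replaces the entire cancellation discussion.

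Your use of Lemma~\ref{lem2_7} is also aimed in the wrong direction. ``Peeling off one $\alpha_t$ at a time'' does not fit the hypothesis of that lemma, which requires the \emph{sum} to lie in $\R_+\alpha_s$ for a single $s$. The paper instead runs the argument through $v^{-1}$: by symmetry each $v^{-1}(\alpha_t)$ lies in $\Phi_X^+$, and for a fixed $s\in X$ one writes $\alpha_s=\sum_{t\in X'}\lambda_t\,v^{-1}(\alpha_t)$ with all summands in $V^+$. Now Lemma~\ref{lem2_7} applies (the left side is in $\R_+\alpha_s$), forcing every $\lambda_t\,v^{-1}(\alpha_t)\in\R_+\alpha_s$; linear independence of $\{v^{-1}(\alpha_t)\}_{t\in X'}$ leaves exactly one $\lambda_t$ nonzero, and reducedness gives $v(\alpha_s)=\alpha_t$. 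Finally, you do not need to build a full bijection: $w(\omega_X)$ and $\omega_{X'}$ are both $G$-orbits in $\Phi^+$, so nonempty intersection already gives equality.
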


\begin{proof}
Up to replacing the pair $(w, w')$ by $(w'^{-1} w,1)$ we can assume that $w'=1$.
Then we have $w (\hat \alpha_X) = \hat \alpha_{X'}$ and we must show that $w (\omega_X) = \omega_{X'}$.
For that it suffices to show that the intersection of the two orbits $w (\omega_X)$ and $\omega_{X'}$ is non-empty.
Either all the roots $w(\alpha_s)$, $s \in X$, lie in $\Phi^+$, or all of them lie in $\Phi^-$.
Moreover, their sum $w (\hat \alpha_X) = \hat \alpha_{X'}$ lies in $V_{X'}^+$.
So, they all lie in $\Phi_{X'}^+$.
Similarly, since $w^{-1} (\hat \alpha_{X'}) = \hat \alpha_X$, all the roots $w^{-1} (\alpha_t)$, $t \in X'$, lie in $\Phi_{X}^+$.
Let $s \in X$.
We have $w (\alpha_s) = \sum_{t \in X'} \lambda_t \alpha_t$ with $\lambda_t \ge 0$ for all $t \in X'$.
Hence, we have $\alpha_s = \sum_{t \in X'} \lambda_t\, w^{-1} (\alpha_t)$ and all the vectors $\lambda_t\, w^{-1} (\alpha_t)$, $t \in X'$, lie in $V^+$.
By Lemma \ref{lem2_7} it follows that all these vectors lie in $\R_+ \alpha_s$.
But the family $\{ w^{-1} (\alpha_t) \mid t \in X'\}$ is linearly independent, hence only one $\lambda_t$ is nonzero.
Thus, there exists $t \in X'$ such that $\lambda_t >0$ and $w (\alpha_s) = \lambda_t \alpha_t$.
Since the root basis $\B$ is reduced, we have $\lambda_t =1$, hence $w (\alpha_s) = \alpha_t \in w (\omega_X) \cap \omega_{X'}$, which completes the proof.
\end{proof}

Now we can define a map $F = F_{\B, G}: \hat \Phi^+ \to \Omega_\fin$ as follows.
Let $\hat \alpha \in \hat \Phi^+$.
Let $X \in \SS$ and $w \in W^G$ such that $\hat \alpha = w (\hat \alpha_X)$.
Then we set $F (\hat \alpha) = w (\omega_X)$.
By Lemma \ref{lem2_8} the definition of this map does not depend on the choices of $w$ and $X$.
Moreover, it is easily shown that it is injective. 
Now, we set 
\[
\hat \BB = \{\hat e_{F (\hat \alpha)} \mid \hat \alpha \in \hat \Phi^+ \}\,,
\]
and we say that the pair $(\Gamma, G)$ has the \emph{$\hat\Phi^+$-basis property} if $\hat \BB$ is a basis of $E^G$.
Note that $(\Gamma, G)$ has the $\hat\Phi^+$-basis property if and only if $\hat \BB = \BB_0$.
Equivalently, $(\Gamma, G)$ has the $\hat \Phi^+$-basis property if and only if $F$ is a bijection (or a surjection).


\section{Proofs}\label{sec3}

\subsection{Proof of Theorem \ref{thm2_9}}\label{subsec3_1}

In this subsection we denote by $S'$ the union of the finite orbits of $S$ under the action of $G$, and we set $\Gamma' = \Gamma_{S'}$, $V' = V_{S'}$, $\Pi' = \Pi_{S'}$ and $\RR' = \RR_{S'}$.
We consider the root basis $\B' = \B_{S'} = (S', V', \Pi', \RR')$ and its root system $\Phi' = \Phi_{S'} = \Phi (\B')$.
Each symmetry $g \in G$ induces a symmetry of $\Gamma'$.
We denote by $G'$ the subgroup of $\Sym (\Gamma')$ of all these symmetries.

\begin{lem}\label{lem3_1}
Let $\alpha \in \Phi$.
The following assertions are equivalent.
\begin{itemize}
\item[(i)]
The orbit $\omega (\alpha)$ of $\alpha$ under the action of $G$ is finite.
\item[(ii)]
The root $\alpha$ lies in $\Phi'$.
\end{itemize}
\end{lem}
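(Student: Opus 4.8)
The plan is to prove the two implications separately, using the fact (Proposition \ref{prop2_3}) that $\Phi' = \Phi(\B_{S'}) = \Phi \cap V'$, so that $\alpha \in \Phi'$ if and only if $\alpha$ is a linear combination of the $\alpha_s$ with $s \in S'$, i.e. with $s$ lying in a finite $G$-orbit. The key elementary observation, to be established first, is that $G$ acts on $\Phi^+$ (and on $\Phi^-$) by permuting the coordinates relative to the basis $\Pi$: since $\B$ is symmetric with respect to $G$, for $g \in G$ and $\beta = \sum_{s\in S}\lambda_s\alpha_s$ we have $g(\beta) = \sum_{s\in S}\lambda_s\alpha_{g(s)} = \sum_{s\in S}\lambda_{g^{-1}(s)}\alpha_s$. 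In particular $\Supp(g(\beta)) = g(\Supp(\beta))$, where $\Supp(\beta) = \{s \in S \mid \lambda_s \neq 0\}$ is a finite set.

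For $(ii)\Rightarrow(i)$: suppose $\alpha \in \Phi' = \Phi \cap V'$. Then $\Supp(\alpha) \subset S'$ is a finite subset of the union of the finite orbits, hence $\Supp(\alpha)$ meets only finitely many $G$-orbits, each of which is finite; therefore $Y = G\cdot\Supp(\alpha)$ is a finite subset of $S'$. By the coordinate-permutation observation, every element $g(\alpha)$ of the orbit $\omega(\alpha)$ has support contained in $Y$, so $\omega(\alpha) \subset \Phi_Y$, the root system of the finite-rank parabolic $\Gamma_Y$. Now $\Gamma_Y$ has only finitely many vertices, but we must still argue $\Phi_Y$ (or at least $\omega(\alpha)$) is finite — actually $\Phi_Y$ need not be finite if $W_Y$ is infinite. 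The cleaner route: $\omega(\alpha)$ is a subset of the orbit of $\alpha$ under the finite group $G_Y$ of permutations of $Y$ acting on the finite-dimensional $V_Y$ by permuting coordinates, but a single $G$-orbit is the image of $G \to G_Y$, which is finite since $G_Y \subset \Sym(\Gamma_Y)$ and $Y$ is finite; hence $|\omega(\alpha)| \le |G_Y| < \infty$. This gives (i).

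For $(i)\Rightarrow(ii)$: suppose $\omega(\alpha)$ is finite, say $\omega(\alpha) = \{\alpha = \beta_1, \dots, \beta_n\}$. By the coordinate-permutation observation, $\bigcup_{g\in G} g(\Supp(\alpha)) = \bigcup_{i=1}^n \Supp(\beta_i)$, which is a finite union of finite sets, hence finite; call it $Z$. Then $Z$ is a $G$-invariant finite subset of $S$, so $Z$ is a finite union of $G$-orbits, each necessarily finite, whence $Z \subset S'$. Since $\Supp(\alpha) \subset Z \subset S'$, we get $\alpha \in V'$, and therefore $\alpha \in \Phi \cap V' = \Phi'$ by Proposition \ref{prop2_3}. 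The main point to get right is simply that ``support'' behaves well under the $G$-action (which is where symmetry of $\B$ is used) and that finiteness of an orbit in $\Phi^+$ forces finiteness of the union of supports; no real obstacle is anticipated, the argument is essentially bookkeeping with supports and orbits.
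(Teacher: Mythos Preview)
Your proposal is correct and largely matches the paper's proof. For $(i)\Rightarrow(ii)$ the argument is essentially identical: the union of supports over the finite orbit is a finite $G$-stable subset of $S$, hence contained in $S'$, so $\alpha\in\Phi\cap V'=\Phi'$. For $(ii)\Rightarrow(i)$ the paper takes a slightly different route: instead of observing that the $G$-action on $\alpha$ factors through the finite group of permutations of $Y=G\cdot\Supp(\alpha)$, it writes $\alpha=(t_1\cdots t_n)(\alpha_s)$ with $t_1,\dots,t_n,s\in S'$ and notes that $g(\alpha)=(g(t_1)\cdots g(t_n))(\alpha_{g(s)})$, so $g(\alpha)$ is determined by the finitely many choices of $(g(t_1),\dots,g(t_n),g(s))$. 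Your support-permutation argument is arguably cleaner since it avoids invoking a reflection word for $\alpha$, but both are straightforward and yield the same conclusion.
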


\begin{proof}
Suppose that the orbit $\omega (\alpha)$ is finite.
The \emph{support} of a vector $x = \sum_{s \in S} \lambda_s \alpha_s \in V$ is defined to be $\Supp (x) = \{ s \in S \mid \lambda_s \neq 0 \}$.
The union $X_\alpha$ of the supports of the roots $\beta \in \omega (\alpha)$ is a finite set and is stable under the action of $G$, hence $X_\alpha$ is a union of finite orbits. 
This implies that $X_\alpha \subset S'$, hence $\alpha \in \Phi \cap V'$, and therefore, by Proposition \ref{prop2_3}, $\alpha \in \Phi'$.

Suppose $\alpha \in \Phi'$.
There exist $t_1, t_2, \dots, t_n, s \in S'$ such that $\alpha = (t_1 t_2 \cdots t_n) (\alpha_s)$.
For each $g \in G$ we have $g (\alpha) = (g(t_1)\, g(t_2) \cdots g(t_n)) (\alpha_{g(s)})$.
The respective orbits of $t_1, t_2, \dots, t_n$ and $s$ are finite, hence the orbit $\omega (\alpha) = \{ g (\alpha) \mid g \in G \}$ is finite.
\end{proof}

\begin{proof}[Proof of Theorem \ref{thm2_9}]
We denote by $\Omega'$ the set of orbits of $\Phi'^+$ under the action of $G'$ and by $\Omega'_\fin$ the subset of finite orbits.
On the other hand we denote by $\hat \B'$ the equivariant root basis of $\B' / G'$.
By Lemma \ref{lem3_1} each orbit of $\Phi'^+$ under the action of $G'$ is finite and each finite orbit in $\Phi^+$ is contained in $\Phi'^+$, hence $\Omega'_\fin = \Omega' = \Omega_\fin$.
Moreover, since each element $X \in \SS$ is contained in $S'$ and $W^G$ is generated by $\SS_W = \{ u_X \mid X \in \SS \}$ (see Theorem \ref{thm2_5}), we have $(W_{S'})^{G'} = W^G$ and $\hat \B' = \hat \B$.
So, we have $\Phi (\hat \B')^+ = \Phi (\hat \B)^+$ and $F_{\B, G} = F_{\B', G'} : \Phi (\hat \B)^+ \to \Omega_\fin$.
Since we know that $(\Gamma, G)$ has the $\hat \Phi^+$-basis property if and only if $F_{\B, G}$ is a bijection, $(\Gamma, G)$ has the $\hat \Phi^+$-basis property if and only if $(\Gamma', G')$ has the $\hat \Phi^+$-basis property. 
\end{proof}


\subsection{Proof of Theorem \ref{thm2_10}}\label{subsec3_2}

From now on we assume that all the orbits of $S$ under the action of $G$ are finite.
Then, by Lemma \ref{lem3_1}, the orbits of $\Phi$ under the action of $G$ are also finite.

\begin{lem}\label{lem3_2}
We assume that for each root $\alpha \in \Phi^+$ there exist $s \in S$ and $w \in W^G$ such that $\alpha = w (\alpha_s)$.
Let $s, t \in S$ and $g \in G$ such that $t = g(s) \neq s$.
Then $m_{s,t} = 2$.
\end{lem}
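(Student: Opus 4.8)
The statement to be proved (Lemma~\ref{lem3_2}) asserts: under the hypothesis that every positive root is of the form $w(\alpha_s)$ with $w \in W^G$ and $s \in S$, any two distinct vertices $s,t$ in the same $G$-orbit satisfy $m_{s,t} = 2$. The plan is to argue by contradiction: assume $m_{s,t} \ge 3$, so that $\alpha_s$ and $\alpha_t$ lie in a common rank-two parahoric subsystem in which $\alpha_s + \alpha_t$ (or some positive combination) is a root, and then exhibit a positive root $\alpha$ whose $G$-orbit forces a contradiction with the hypothesis. The natural candidate to examine is a root $\alpha$ lying ``between'' $\alpha_s$ and $\alpha_t = g(\alpha_s)$ — for instance $\alpha = \sigma_s(\alpha_t) = \alpha_t - \langle \alpha_s,\alpha_t\rangle \alpha_s$ in the canonical picture, or more robustly a root in $\Phi^+_{\{s,t\}}$ fixed by no nontrivial consideration — and to test whether it can be written as $w(\alpha_r)$ with $w \in W^G$.

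The key mechanism I expect to use is Lemma~\ref{lem2_8} together with the map $F$: if $\alpha = w(\alpha_r)$ with $w \in W^G$ and $\{r\} \in \SS$ (note a singleton orbit $\{r\}$ lies in $\SS$ exactly when $s$ is a fixed point of $G$, which it is not here since $g(s)=t\neq s$), then the whole orbit $w(\omega_{\{r\}})$ would be a single root, i.e. $\alpha$ would itself be $G$-invariant. But more to the point: since $s$ and $t = g(s)$ are in the same orbit $X \ni s$, and $X$ has at least two elements, the relevant element of $\SS_W$ is $u_X$, the longest element of $W_X$; for $u_X$ to be defined we need $W_X$ finite, and if $m_{s,t}\ge 3$ then already the rank-two subgroup $W_{\{s,t\}}$ is dihedral. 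The contradiction should come from comparing the $G$-orbit of a chosen root $\alpha \in \Phi^+_{\{s,t\}}$ with what the hypothesis allows: writing $\alpha = w(\alpha_r)$, applying $g$ gives $g(\alpha) = g(w)(\alpha_{g(r)})$ with $g(w) \in W^G = W$ (since $W^G$ is $G$-stable, being the fixed subgroup), and one tracks the supports. The point is that $\alpha$, $g(\alpha)$, $g^2(\alpha)$, $\dots$ are distinct positive roots all supported inside the orbit of $\{s,t\}$, yet each must be a $W^G$-translate of a simple root, and I would derive a numerical impossibility (e.g. on coefficient sums, using that $W^G$ is generated by the $u_X$ which act by permuting the $\hat\alpha_X$ up to sign) — essentially that a $W^G$-translate of a simple root, when it lies in a small parabolic, must be $G$-invariant, contradicting $g(\alpha)\neq\alpha$.

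Concretely, the cleanest route: take $\alpha = \alpha_t$ itself. By hypothesis $\alpha_t = w(\alpha_r)$ for some $w\in W^G$, $r\in S$. Since $\B$ is reduced and $\alpha_t$ is simple, Theorem~\ref{thm2_2} and the structure of $W^G$ (Theorem~\ref{thm2_5}) pin down $w$ and $r$; in fact applying any $g'\in G$ gives $\alpha_{g'(t)} = g'(w)(\alpha_{g'(r)})$. One then shows $\{r\}$ must be a finite orbit with $W_{\{r\}}$ finite — trivially true — so $\{r\}\in\SS$, and $w(\omega_{\{r\}}) = \{\alpha_t\}$, forcing $\omega_{\{r\}}$ to be a single-element orbit, i.e. $r$ is $G$-fixed. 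But then $\alpha_t = w(\alpha_r)$ with $w\in W^G$ and $r$ fixed, so applying $g$: $\alpha_{g(t)} = g(w)(\alpha_r) = g(w)w^{-1}(\alpha_t)$, and $g(w)w^{-1}\in W^G$. Iterating, the orbit of $\alpha_t$ under $\langle g\rangle$ lies in $W^G\cdot\alpha_t$, hence in $W^G\cdot\alpha_r$ which (as $r$ is fixed and $\hat\alpha_{\{r\}}=\alpha_r$) relates to $\hat\Phi$; examining $\sigma_s$ versus $\sigma_t=g\sigma_s g^{-1}$ and their product of order $m_{s,t}$, I expect to conclude $m_{s,t}=2$ is the only possibility for the orbit structure of $\{\alpha_s,\alpha_t,\dots\}$ to be consistent with $F$ being built from the $\hat\alpha_X$'s.

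**Main obstacle.** The delicate point is handling the case $s \in S$ where the orbit $X$ of $s$ is large and $W_X$ may be infinite (so $X\notin\SS$ and $u_X$ does not exist) — then none of $\alpha_s$'s orbit is directly tied to a generator of $W^G$, and one must instead use the hypothesis ``$\alpha = w(\alpha_r)$ with $w\in W^G$'' as the sole input. Extracting a contradiction from $m_{s,t}\ge 3$ in that generality, without a longest element to anchor the argument, is where care is needed; I anticipate the real work is a support/positivity argument in the spirit of Lemma~\ref{lem2_7} and the proof of Lemma~\ref{lem2_8}, showing that a $W^G$-translate of a single simple root cannot have two distinct vertices of a single $G$-orbit in its support unless those vertices are non-adjacent.
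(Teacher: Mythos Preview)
Your first instinct is exactly right: the root $\alpha = s(\alpha_t) = \alpha_t + \lambda\alpha_s$ (with $\lambda > 0$ since $m_{s,t}\ge 3$) is the one to use, and the paper proves the lemma in five lines with precisely this root. But you abandon it and never carry out the argument. The detours through Lemma~\ref{lem2_8} and the map $F$ are unnecessary, and your ``concretely'' route with $\alpha = \alpha_t$ is a dead end: the decomposition $\alpha_t = w(\alpha_r)$ is trivially satisfied by $w=1$, $r=t$, so nothing forces $r$ to lie in a singleton orbit, and your claim that ``$w(\omega_{\{r\}}) = \{\alpha_t\}$'' presupposes that $\{r\}$ is already a $G$-orbit, which is circular.

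The single idea you are missing --- and which makes the proof immediate --- is the equivariance observation: if $w \in W^G$ then $w$ commutes with $g$ on $V$, so $w(\alpha_t) = w(g(\alpha_s)) = g(w(\alpha_s))$. Thus $w(\alpha_s)$ and $w(\alpha_t)$ lie in the same $G$-orbit of $\Phi$ and in particular have the \emph{same sign}. Now write $\alpha = w^{-1}(\alpha_r)$ as given by the hypothesis (equivalently $w(\alpha) = \alpha_r$); then $\alpha_r = w(\alpha_t) + \lambda\, w(\alpha_s)$ with both summands of the same sign, hence both in $V^+$. Lemma~\ref{lem2_7} forces both $w(\alpha_t)$ and $\lambda\, w(\alpha_s)$ into $\R_+\alpha_r$, contradicting their linear independence (as $s \ne t$). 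That is the entire argument; no appeal to $\SS$, $u_X$, longest elements, or the map $F$ is needed, and in particular the ``main obstacle'' you flag (the orbit $X$ possibly having $W_X$ infinite) simply does not arise.
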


\begin{proof}
Suppose instead that $m_{s,t} \ge 3$.
Set $\alpha = s (\alpha_t)$.
We have $\alpha = \alpha_t + \lambda \alpha_s$ where $\lambda > 0$.
In particular $\alpha \in \Phi^+$.
So, there exist $w \in W^G$ and $r \in S$ such that $w(\alpha) = \alpha_r$.
Since $t = g(s)$, we have $\alpha_t = g(\alpha_s)$, hence $w (\alpha_t) = w (g (\alpha_s)) = g (w (\alpha_s))$.
This shows that either both roots $w (\alpha_s)$ and $w (\alpha_t)$ lie in $\Phi^+$, or they both lie in $\Phi^-$.
Moreover, $w (\alpha) = w (\alpha_t) + \lambda\, w(\alpha_s) = \alpha_r$, hence the two roots $w (\alpha_s)$ and $w (\alpha_t)$ lie in $\Phi^+$.
Thus, by Lemma \ref{lem2_7}, the two vectors $w (\alpha_t)$ and $\lambda\, w(\alpha_s)$ lie in $\R_+ \alpha_r$, which is a contradiction since, $t$ being different from $s$, these two vectors are linearly independent.
\end{proof}

\begin{prop}\label{prop3_3}
The following conditions are equivalent.
\begin{itemize}
\item[(i)]
The pair $(\Gamma, G)$ has the $\hat \Phi^+$-basis property. 
\item[(ii)]
For each root $\alpha \in \Phi^+$ there exist $w \in W^G$ and $s \in S$ such that $\alpha = w (\alpha_s)$.
\item[(iii)]
For each root $\alpha \in \Phi$ there exist $w \in W^G$ and $s \in S$ such that $\alpha = w (\alpha_s)$.
\end{itemize}
\end{prop}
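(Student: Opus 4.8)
The plan is to prove the equivalence (i)\,$\Leftrightarrow$\,(ii)\,$\Leftrightarrow$\,(iii) in the cycle (iii)\,$\Rightarrow$\,(ii)\,$\Rightarrow$\,(i)\,$\Rightarrow$\,(ii)\,$\Rightarrow$\,(iii), exploiting the characterisation established in Section \ref{newsec3} that $(\Gamma,G)$ has the $\hat\Phi^+$-basis property if and only if the map $F=F_{\B,G}\colon\hat\Phi^+\to\Omega_\fin$ is surjective. The implication (iii)\,$\Rightarrow$\,(ii) is trivial, and (ii)\,$\Rightarrow$\,(iii) is easy: if $\alpha\in\Phi^-$, then $-\alpha\in\Phi^+$, so $-\alpha=w(\alpha_s)$ for some $w\in W^G$, $s\in S$; writing $X$ for the $G$-orbit of $s$ and noting that $m_{s,t}=2$ for distinct $s,t\in X$ by Lemma \ref{lem3_2} (whose hypothesis is exactly (ii)), the longest element $u_X$ of the finite group $W_X\cong(\Z/2\Z)^{|X|}$ is the product of the commuting generators in $X$ and sends $\alpha_s$ to $-\alpha_s$; hence $\alpha=(wu_Xw^{-1})w(\alpha_s)=wu_X(\alpha_s)$ — wait, more carefully $wu_Xw^{-1}$ need not be in $W^G$, so instead one uses $\alpha = w u_X (\alpha_s)$ after checking $u_X(\alpha_s)=-\alpha_s$, and $wu_X\in W^G$ since $u_X\in\SS_W\subset W^G$. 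Thus $\alpha=(wu_X)(\alpha_s)$ with $wu_X\in W^G$.

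The substantive content is the pair (i)\,$\Leftrightarrow$\,(ii), and I expect (i)\,$\Rightarrow$\,(ii) to be the main obstacle, with (ii)\,$\Rightarrow$\,(i) somewhat easier. For (ii)\,$\Rightarrow$\,(i): assuming (ii), I want to show $F$ is surjective, i.e. every finite orbit $\omega\in\Omega_\fin$ is of the form $w(\omega_X)$ for some $X\in\SS$ and $w\in W^G$. Pick $\alpha\in\omega$; by (ii) write $\alpha=w(\alpha_s)$ with $w\in W^G$, $s\in S$. Let $X$ be the $G$-orbit of $s$; by Lemma \ref{lem3_2} the generators in $X$ pairwise commute, so $W_X$ is finite, whence $X\in\SS$. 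Then $\omega_X=\{\alpha_t\mid t\in X\}$ is the $G$-orbit of $\alpha_s$, and applying $w\in W^G$ (which commutes with the $G$-action) shows $w(\omega_X)$ is the $G$-orbit of $w(\alpha_s)=\alpha$, i.e. $w(\omega_X)=\omega$. Finally $w(\hat\alpha_X)=\sum_{t\in X}w(\alpha_t)$ is a sum of roots lying in a common orbit hence all in $\Phi^+$ or all in $\Phi^-$; one checks it lies in $\Phi^+$ (e.g. because $\hat\alpha_X\in\hat\Phi^+$ and $w\in W^G=W(\hat\B)$, so $w(\hat\alpha_X)\in\hat\Phi$, and its coordinates are nonnegative), so $w(\hat\alpha_X)\in\hat\Phi^+$ and $F(w(\hat\alpha_X))=w(\omega_X)=\omega$.

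For (i)\,$\Rightarrow$\,(ii), assume $F$ is surjective and take $\alpha\in\Phi^+$. The idea is to run an induction on the height $\sum_s\lambda_s$ of $\alpha=\sum_s\lambda_s\alpha_s$, or equivalently on $\lg(\varpi(\alpha))$ via Theorem \ref{thm2_2}. If $\alpha=\alpha_s$ for some $s$ we are done. Otherwise, consider the singleton orbit situation: the $G$-orbit $\omega$ of $\alpha$ is finite, so by surjectivity $\omega=w(\omega_X)=F(w(\hat\alpha_X))$ for some $X\in\SS$, $w\in W^G$; thus $\alpha=w(\alpha_s)$ for some $s\in X$ — and this is precisely (ii)! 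So in fact (i)\,$\Rightarrow$\,(ii) is immediate once one unwinds definitions: $F$ surjective means every finite orbit in $\Phi^+$, in particular the orbit of $\alpha$, equals $w(\omega_X)$ for some $X\in\SS$ and $w\in W^G$, and since $\omega_X=\{\alpha_s\mid s\in X\}$ this gives $\alpha=w(\alpha_s)$ with $w\in W^G$, $s\in X\subset S$. Hence the only genuine verification needed is the one in (ii)\,$\Rightarrow$\,(i) that $w(\hat\alpha_X)$ actually lands in $\hat\Phi^+$, together with the bookkeeping via Lemma \ref{lem3_2} ensuring $X\in\SS$; I would write these out carefully and leave the rest to the short chain of implications above.
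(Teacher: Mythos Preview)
Your proposal is correct and follows essentially the same approach as the paper: the same cycle of implications, the same use of Lemma \ref{lem3_2} to get $X\in\SS$ and $u_X(\alpha_s)=-\alpha_s$ for (ii)\,$\Rightarrow$\,(iii), and the same unwinding of the surjectivity of $F$ for (i)\,$\Leftrightarrow$\,(ii). The only point where you add detail beyond the paper is the verification that $w(\hat\alpha_X)\in\hat\Phi^+$ in (ii)\,$\Rightarrow$\,(i); the paper simply asserts this, but your reasoning (the $w(\alpha_t)$ lie in a single $G$-orbit containing $\alpha\in\Phi^+$, hence are all positive, so $w(\hat\alpha_X)$ has nonnegative $\Pi$-coefficients and thus lies in $\hat\Phi^+$ rather than $\hat\Phi^-$) is exactly what is intended.
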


\begin{proof}
Suppose that $(\Gamma, G)$ has the $\hat \Phi^+$-basis property.
Let $\alpha \in \Phi^+$.
The orbit $\omega (\alpha)$ lies in $\Omega_\fin = \Omega$ and the map $F : \hat\Phi^+ \to \Omega_\fin$ is a bijection, hence there exist $w \in W^G$ and $X \in \SS$ such that $w (\omega_X) = \omega (\alpha)$.
In particular, there exists an element $s \in X \subset S$ such that $w (\alpha_s) = \alpha$.

Suppose that for each $\alpha \in \Phi^+$ there exist $w \in W^G$ and $s \in S$ such that $\alpha = w (\alpha_s)$. 
Let $\omega \in \Omega_\fin = \Omega$.
Let $\alpha \in \omega$.
By assumption there exist $w \in W^G$ and $s \in S$ such that $\alpha = w (\alpha_s)$.
Let $X$ be the orbit of $s$ under the action of $G$.
The set $X$ is finite since it is an orbit and, by Lemma \ref{lem3_2}, $W_X$ is the direct product of $|X|$ copies of $\Z/2\Z$.
So, $W_X$ is finite and $X \in \SS$.
Set $\hat \beta = w (\hat \alpha_X)$.
We have $\hat \beta \in \hat \Phi^+$ and the orbit $F (\hat \beta) = w (\omega_X)$ contains the root $\alpha = w (\alpha_s)$, hence  it is equal to $\omega$.
So, the map $F$ is a surjection, hence $(\Gamma, G)$ has the $\hat \Phi^+$-basis property.

Suppose that for each $\alpha \in \Phi^+$ there exist $w \in W^G$ and $s \in S$ such that $\alpha = w (\alpha_s)$.
In order to show that for each $\alpha \in \Phi$ there exist $w \in W^G$ and $s \in S$ such that $\alpha = w (\alpha_s)$, it suffices to consider a root $\alpha \in \Phi^-$.
By assumption, since $- \alpha \in \Phi^+$, there exist $w' \in W^G$ and $s \in S$ such that $-\alpha = w' (\alpha_s)$.
Let $X$ be the orbit of $s$.
By Lemma \ref{lem3_2} the Coxeter graph $\Gamma_X$ is a finite union of isolated vertices, hence $W_X$ is finite, $u_X = \prod_{t \in X} t$, and $u_X (\alpha_t) = -\alpha_t$ for all $t \in X$.
Set $w = w' u_X$.
Then $w \in W^G$ and $w (\alpha_s) = w' u_X (\alpha_s) = w' (-\alpha_s) = \alpha$.

The implication (iii) $\Rightarrow$ (ii) is obvious.
\end{proof}

By combining Lemma \ref{lem3_2} and Proposition \ref{prop3_3} we get the following.

\begin{lem}\label{lem3_4}
Suppose that $(\Gamma, G)$ has the $\hat \Phi^+$-basis property. 
Let $X$ be an orbit of $S$ under the action of $G$.
Then $\Gamma_X$ is a finite union of isolated vertices.
In particular, $X \in \SS$, $u_X = \prod_{t \in X} t$, and $u_X (\alpha_t) = -\alpha_t$ for all $t \in X$.
\end{lem}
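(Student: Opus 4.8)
The statement we must prove is Lemma \ref{lem3_4}, which follows directly by combining Lemma \ref{lem3_2} with the equivalence of conditions (i), (ii), (iii) in Proposition \ref{prop3_3}. First I would invoke Proposition \ref{prop3_3}: since $(\Gamma, G)$ has the $\hat \Phi^+$-basis property, condition (ii) holds, namely for each root $\alpha \in \Phi^+$ there exist $w \in W^G$ and $s \in S$ with $\alpha = w(\alpha_s)$. This is exactly the hypothesis required to apply Lemma \ref{lem3_2}.

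\textbf{Key steps.} Let $X$ be an orbit of $S$ under the action of $G$. By the standing assumption of this subsection, $X$ is finite. I would show $\Gamma_X$ has no edges, i.e. $m_{s,t} = 2$ for all distinct $s,t \in X$. Indeed, take $s,t \in X$ with $s \neq t$; since $X$ is a $G$-orbit there exists $g \in G$ with $t = g(s)$. Applying Lemma \ref{lem3_2} (whose hypothesis is satisfied, as noted above), we conclude $m_{s,t} = 2$. Hence $\Gamma_X$ is a finite union of isolated vertices. It follows that $W_X$ is a finite group (the direct product of $|X|$ copies of $\Z/2\Z$), so $X \in \SS$. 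The longest element of such a group is the product of all its generators, which commute pairwise, so $u_X = \prod_{t \in X} t$; and since each generator $t$ acts by $t(\alpha_t) = -\alpha_t$ and fixes $\alpha_r$ for $r \neq t$ in $X$ (as $m_{t,r}=2$ gives $\langle \alpha_t, \alpha_r\rangle$-orthogonality in the reduced root basis — more precisely $\sigma_t(\alpha_r)=\alpha_r$ since $\sigma_t\sigma_r$ has order $2$), we get $u_X(\alpha_t) = -\alpha_t$ for all $t \in X$.

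\textbf{Main obstacle.} There is essentially no obstacle here: the lemma is a bookkeeping corollary, and the only point requiring a moment's care is verifying that $\sigma_t(\alpha_r) = \alpha_r$ for distinct $t, r \in X$, which is immediate from $m_{t,r} = 2$ together with the fact that for a reduced root basis the order of $\sigma_t\sigma_r$ being $2$ forces $\sigma_t$ and $\sigma_r$ to commute and each to fix the $(-1)$-eigenvector of the other. All the substantive work has already been done in Lemma \ref{lem3_2} (a reduced-ness plus Lemma \ref{lem2_7} argument) and in the equivalence (i) $\Leftrightarrow$ (ii) of Proposition \ref{prop3_3}. So the proof is just: apply Proposition \ref{prop3_3} to pass from the $\hat\Phi^+$-basis property to condition (ii), feed that into Lemma \ref{lem3_2} for each pair inside an orbit, and read off the structure of $W_X$ and the action of $u_X$.
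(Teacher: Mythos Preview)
Your proposal is correct and follows exactly the paper's approach: the paper states Lemma \ref{lem3_4} simply as the result of combining Lemma \ref{lem3_2} and Proposition \ref{prop3_3}, which is precisely what you do, only with more detail spelled out. Your verification that $\sigma_t(\alpha_r)=\alpha_r$ for distinct $t,r\in X$ (via the commutation $\sigma_t\sigma_r=\sigma_r\sigma_t$ and the one-dimensionality of the $(-1)$-eigenspace of a reflection) is the right way to justify the ``In particular'' clause in the general reduced-root-basis setting.
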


\begin{proof}[Proof of Theorem \ref{thm2_10}]
Let $\Gamma_i$, $i \in I$, be the connected components of $\Gamma$.
For $i \in I$ we denote by $S_i$ the set of vertices of $\Gamma_i$, and we set $V_i = V_{S_i}$, $\Pi_i = \Pi_{S_i}$, and $\RR_i = \RR_{S_i}$.
Consider the root basis $\B_i = \B_{S_i} = (S_i, V_i, \Pi_i, \RR_i)$ and its root system $\Phi_i = \Phi_{S_i} = \Phi (\B_i)$.
Note that $\Phi$ is the disjoint union of the $\Phi_i$'s and $W$ is the direct sum of the $W_{S_i}$'s.
Recall that each symmetry $g \in \Stab_G (\Gamma_i)$ induces a symmetry of $\Gamma_i$ and that $G_i$ denotes the subgroup of $\Sym (\Gamma_i)$ of these symmetries.

Suppose that $(\Gamma, G)$ has the $\hat \Phi^+$-basis property.
Let $i \in I$.
Let $\alpha \in \Phi_i$.
By Proposition \ref{prop3_3} there exist $s \in S$ and $w \in W^G$ such that $\alpha = w (\alpha_s)$.
Since $W$ is the direct sum of the $W_{S_j}$'s, $w$ is uniquely written $w = \prod_{j \in I} w_j$, where $w_j \in W_{S_j}$ for all $j \in I$, and there are only finitely many $j \in I$ such that $w_j \neq 1$.
Let $j \in I$ such that $s \in S_j$.
We have $\alpha = w (\alpha_s) = w_j (\alpha_s) \in \Phi_j$, hence $i = j$ and $s \in S_i$.
On the other hand, if $g \in \Stab_G(\Gamma_i)$, then $g(w) = w$ and $g(W_{S_i}) = W_{S_i}$, hence $g(w_i) = w_i$.
So, $w_i \in W_{S_i}^{G_i}$ and $\alpha = w_i (\alpha_s)$.
By Proposition \ref{prop3_3} we conclude that $(\Gamma_i, G_i)$ has the $\hat \Phi^+$-basis property.

Suppose that $(\Gamma_i, G_i)$ has the $\hat \Phi^+$-basis property for all $i \in I$.
Let $\alpha \in \Phi$.
Let $i \in I$ such that $\alpha \in \Phi_i$.
By Proposition \ref{prop3_3} there exist $w_i \in W_{S_i}^{G_i}$ and $s \in S_i$ such that $\alpha = w_i (\alpha_s)$.
The action of $G$ on $S$ induces an action of $G$ on $I$ defined by $g(\Gamma_i) = \Gamma_{g(i)}$, for $g \in G$ and $i \in I$.
Since the orbits of $S$ under the action of $G$ are finite, the orbits of $I$ under the action of $G$ are also finite.
Let $J \subset I$ be the orbit of $i$.
For each $j \in J$ we choose $g \in G$ such that $g(i) = j$ and we set $w_j = g(w_i) \in W_{S_j}$.
The fact that $w_i \in W_{S_i}^{G_i}$ implies that the definition of $w_j$ does not depend on the choice of $g$.
Let $w = \prod_{j \in J} w_j$.
Then $w \in W^G$ and $w (\alpha_s) = w_i (\alpha_s) = \alpha$.
By Proposition \ref{prop3_3} we conclude that $(\Gamma, G)$ has the $\hat \Phi^+$-basis property.
\end{proof}


\subsection{Proof of Proposition \ref{prop2_11}}\label{subsec3_3}

From now on we assume that $\Gamma$ is connected, that $G$ is nontrivial, and that all the orbits of $S$ under the action of $G$ are finite.
We also assume that $\Phi$ is the canonical root system and that $\langle .,. \rangle : V \times V \to \R$ is the canonical bilinear form of $\Gamma$.

Recall that for all $s,t \in S$ we have $\langle \alpha_s , \alpha_t \rangle = -2 \cos(\pi/m_{s,t})$.
In particular $\langle \alpha_s, \alpha_t \rangle = 2$ if $s=t$, $\langle \alpha_s, \alpha_t \rangle = 0$ if $m_{s,t}=2$, $\langle \alpha_s, \alpha_t \rangle = -1$ if $m_{s,t}=3$, and $-2 \le \langle \alpha_s, \alpha_t \rangle < -1$ if $4 \le m_{s,t} \le \infty$.
Let $\equiv$ denote the equivalence relation on $\Phi$ generated by the relation $\equiv_1$ defined by: $\alpha \equiv_1 \beta$ $\Leftrightarrow$ $\langle \alpha, \beta \rangle \not \in \{ 0, 1, -1 \}$.
Note that the relation $\equiv_1$ is reflexive (since $\langle \alpha, \alpha \rangle = 2$ for all $\alpha \in \Phi$) and symmetric.

The proof of Proposition \ref{prop2_11} is divided into two parts.
In a first part (see Corollary \ref{corl3_13}) we prove that, if $\equiv$ has at least two equivalence classes, then $\Gamma$ is one of the Coxeter graphs $A_m$ ($m \ge 1$), $D_m$ ($m \ge 4$), $E_m$ ($6 \le m \le 8$), $\tilde A_m$ ($m \ge 2$), $\tilde D_m$ ($m \ge 4$), $\tilde E_m$ ($6 \le m \le 8$), $A_\infty$, ${}_\infty A_\infty$, $D_\infty$.
In a second part (see Proposition \ref{prop3_17}) we prove that, if $(\Gamma, G)$ has the $\hat \Phi^+$-basis property, then $\equiv$ has at least two equivalence classes.

For $\alpha \in \Phi$ we denote by $r_\alpha : V \to V$ the linear reflection defined by $r_\alpha (x) = x - \langle \alpha , x \rangle \alpha$.
Note that, if $s \in S$ and $w \in W$ are such that $\alpha = w (\alpha_s)$, then $r_\alpha = w s w^{-1}$.
In particular $r_\alpha \in W$ for all $\alpha \in \Phi$ and $r_{\alpha_s} = s$ for all $s \in S$.

\begin{lem}\label{lem3_5}
\begin{itemize}
\item[(1)]
We have $\alpha \equiv_1 -\alpha$ for all $\alpha \in \Phi$.
\item[(2)]
Let $\alpha, \beta \in \Phi$ such that $\alpha \equiv \beta$.
Then $w (\alpha) \equiv w (\beta)$ for all $w \in W$.
\item[(3)]
Let $\alpha, \beta \in \Phi$ such that $\alpha \equiv \beta$.
Then $\alpha \equiv r_\alpha (\beta)$ and $\beta \equiv r_\alpha (\beta)$.
\end{itemize}
\end{lem}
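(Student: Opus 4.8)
The plan is to treat the three parts in order, since part (1) is a one-line computation, part (2) rests on the $W$-invariance of the canonical bilinear form, and part (3) is a purely formal consequence of the first two together with the properties of $r_\alpha$ recorded just before the statement.

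For part (1), I would compute directly: for every $\alpha \in \Phi$ one has $\langle \alpha, -\alpha \rangle = -\langle \alpha, \alpha \rangle = -2$, using the fact recalled in the text that $\langle \beta, \beta \rangle = 2$ for all $\beta$ in the canonical root system. Since $-2 \notin \{0, 1, -1\}$, this says precisely $\alpha \equiv_1 -\alpha$, hence $\alpha \equiv -\alpha$.

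For part (2), the key input is that the canonical bilinear form is $W$-invariant, i.e.\ $\langle w(x), w(y) \rangle = \langle x, y \rangle$ for all $w \in W$ and $x,y \in V$ (this is standard and immediate from the definition of the reflections $\sigma_s$). It follows that $\equiv_1$ is $W$-equivariant: if $\alpha \equiv_1 \beta$, then $\langle w(\alpha), w(\beta) \rangle = \langle \alpha, \beta \rangle \notin \{0,1,-1\}$, so $w(\alpha) \equiv_1 w(\beta)$. Since $\equiv$ is by definition the equivalence relation generated by $\equiv_1$, any finite chain $\alpha = \gamma_0 \equiv_1 \gamma_1 \equiv_1 \cdots \equiv_1 \gamma_n = \beta$ witnessing $\alpha \equiv \beta$ is carried by $w$ to a chain $w(\alpha) = w(\gamma_0) \equiv_1 \cdots \equiv_1 w(\gamma_n) = w(\beta)$, giving $w(\alpha) \equiv w(\beta)$.

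For part (3), I would apply part (2) with $w = r_\alpha \in W$ to the hypothesis $\alpha \equiv \beta$, obtaining $r_\alpha(\alpha) \equiv r_\alpha(\beta)$, that is $-\alpha \equiv r_\alpha(\beta)$ since $r_\alpha(\alpha) = -\alpha$. Combining this with part (1), which gives $\alpha \equiv -\alpha$, and using that $\equiv$ is symmetric and transitive, yields $\alpha \equiv r_\alpha(\beta)$; the assertion $\beta \equiv r_\alpha(\beta)$ then follows from $\beta \equiv \alpha$ and transitivity. There is no real obstacle: the only points to verify are that $r_\alpha$ genuinely lies in $W$ and satisfies $r_\alpha(\alpha) = -\alpha$, both of which are already noted in the paragraph preceding the lemma (if $\alpha = w(\alpha_s)$ then $r_\alpha = w s w^{-1}$).
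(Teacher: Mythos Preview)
Your proposal is correct and follows essentially the same approach as the paper's proof: compute $\langle \alpha,-\alpha\rangle=-2$ for (1), invoke $W$-invariance of the bilinear form for (2), and for (3) apply (2) with $w=r_\alpha$ and combine with (1) and transitivity. The only difference is that you spell out the chain argument for (2) in slightly more detail than the paper does.
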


\begin{proof}
Part (1) is true since $\langle \alpha, -\alpha \rangle = -2 \not \in \{ 0, 1, -1 \}$.
Part (2) follows from the fact that each $w \in W$ preserves the bilinear form $\langle .,. \rangle$.
Let $\alpha, \beta \in \Phi$ such that $\alpha \equiv \beta$.
By Part (2) we have $r_\alpha (\alpha) \equiv r_\alpha (\beta)$.
But $r_\alpha (\alpha) = -\alpha$ hence, by Part (1), $\alpha \equiv r_\alpha (\beta)$.
We also have $\beta \equiv r_\alpha (\beta)$ since $\alpha \equiv \beta$.
\end{proof}

\begin{lem}\label{lem3_6}
Assume that $\alpha_s \equiv \alpha_t$ for all $s,t \in S$.
Then all the elements of $\Phi$ are equivalent modulo the relation $\equiv$.
\end{lem}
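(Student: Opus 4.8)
The statement says: if every simple root is equivalent to every other simple root modulo $\equiv$, then in fact \emph{all} of $\Phi$ collapses to a single $\equiv$-class. The natural strategy is to show that every root $\beta \in \Phi$ is equivalent to some simple root $\alpha_s$; combined with the hypothesis $\alpha_s \equiv \alpha_t$ for all $s,t$, this gives $\beta \equiv \beta'$ for any two roots via a chain through the simple roots. So the whole task reduces to: \emph{each $\beta \in \Phi$ satisfies $\beta \equiv \alpha_s$ for some $s \in S$.}

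To prove that, I would induct on the word length of a Weyl group element expressing $\beta$. Write $\beta = w(\alpha_s)$ with $w \in W$ and $s \in S$; if $\ell(w) = 0$ then $\beta = \pm\alpha_s$ and we are done by Lemma \ref{lem3_5}(1). Otherwise write $w = t w'$ with $t \in S$ and $\ell(w') = \ell(w) - 1$, so $\beta = t(\beta')$ where $\beta' = w'(\alpha_s)$. By induction $\beta' \equiv \alpha_r$ for some $r \in S$. Now $t(\beta') = r_{\alpha_t}(\beta')$, and I want to conclude $\beta = r_{\alpha_t}(\beta') \equiv \alpha_{r'}$ for some $r'$. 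The clean way: by hypothesis $\alpha_t \equiv \alpha_r$, and $\beta' \equiv \alpha_r$, so $\beta'$ and $\alpha_t$ lie in the same $\equiv$-class; then Lemma \ref{lem3_5}(3) applied with the pair $(\alpha_t, \beta')$ gives $\beta' \equiv r_{\alpha_t}(\beta') = \beta$ as well as $\alpha_t \equiv \beta$. Hence $\beta \equiv \alpha_t$, which is what we wanted.

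The only point that needs care — and the one I would treat as the main (small) obstacle — is the base and glue of this induction: Lemma \ref{lem3_5}(3) requires $\alpha_t \equiv \beta'$ as a \emph{hypothesis} before it lets us conclude $\beta' \equiv r_{\alpha_t}(\beta')$, so I must make sure that $\alpha_t$ and $\beta'$ genuinely sit in a common $\equiv$-class before invoking it. That is exactly where the standing assumption $\alpha_s \equiv \alpha_t$ for all $s,t$ enters: it bridges $\alpha_t$ to $\alpha_r$, and the inductive hypothesis bridges $\alpha_r$ to $\beta'$, so transitivity of $\equiv$ does the rest. Once every root is $\equiv$-equivalent to a simple root, and all simple roots are mutually equivalent, transitivity finishes the proof that $\Phi$ is a single $\equiv$-class.
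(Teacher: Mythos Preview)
Your proof is correct and follows essentially the same approach as the paper: reduce to showing that every root is $\equiv$-equivalent to some simple root, then induct on the length of $w$ in an expression $\beta = w(\alpha_s)$, using Lemma~\ref{lem3_5} together with the hypothesis to pass from $\beta' = w'(\alpha_s)$ to $\beta = t(\beta')$. The only (harmless) slip is in the base case: if $\ell(w)=0$ then $w=1$ and $\beta=\alpha_s$, not $\pm\alpha_s$.
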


\begin{proof}
It suffices to prove that for each $\alpha \in \Phi$ there exists $s \in S$ such that $\alpha \equiv \alpha_s$.
Let $\alpha \in \Phi$.
There exist $t \in S$ and $w \in W$ such that $\alpha = w (\alpha_t)$.
We argue by induction on the length of $w$.
We can assume that $w \neq 1$.
Then we have $w = sw'$ with $s \in S$, $w' \in W$ and $\lg (w') < \lg (w)$.
Set $\beta = w' (\alpha_t)$.
By induction there exists $u \in S$ such that $\beta \equiv \alpha_u$.
We have $\alpha = w (\alpha_t) = (sw') (\alpha_t) = s(\beta)$ hence, by Lemma \ref{lem3_5}\,(2), $\alpha \equiv s(\alpha_u)$.
But, by assumption, $\alpha_u \equiv \alpha_s$, hence, by Lemma \ref{lem3_5}\,(3), $\alpha_s \equiv s(\alpha_u) \equiv \alpha$.
\end{proof}

Recall that the \emph{support} of a vector $x = \sum_{s \in S} \lambda_s \alpha_s \in V$ is $\Supp (x) = \{ s \in S \mid \lambda_s \neq 0 \}$.
Let $s,t \in S$.
A \emph{path} from $s$ to $t$ of \emph{length} $\ell$ is a sequence $s_0, s_1, \dots, s_\ell$ in $S$ of length $\ell +1$ such that $s_0=s$, $s_\ell = t$ and $m_{s_{i-1}, s_i} \ge 3$ for all $i \in \{1, \dots, \ell\}$.
The \emph{distance} between $s$ and $t$, denoted by $d(s,t)$, is the shortest length of a path from $s$ to $t$.
Then the \emph{distance} between an element $s \in S$ and a subset $X \subset S$ is $d(s,X) = \min \{ d(s,t) \mid t \in X \}$.

\begin{lem}\label{lem3_7}
Let $\alpha = \sum_{s \in S} \lambda_s \alpha_s \in \Phi^+$, $t \in S \setminus \Supp (\alpha)$ and $t_0 \in \Supp (\alpha)$.
Assume that $d (t, \Supp (\alpha)) = d( t,t_0)$ and $\lambda_{t_0} >1$.
Then $\alpha \equiv \alpha_t$.
\end{lem}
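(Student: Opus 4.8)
The plan is to prove the statement by induction on the distance $d=d(t,\Supp(\alpha))$, which by hypothesis equals $d(t,t_0)$ and is at least $1$ since $t\notin\Supp(\alpha)$. Throughout, write $\alpha=\sum_{s\in S}\lambda_s\alpha_s$, so that all $\lambda_s\ge 0$ because $\alpha\in\Phi^+$, and $\lambda_s>0$ exactly when $s\in\Supp(\alpha)$.

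The engine of the argument is a simple computation that I would isolate first: if $v\in S\setminus\Supp(\alpha)$ is joined by an edge of $\Gamma$ to some $w\in\Supp(\alpha)$ with $\lambda_w>1$, then
\[
\langle\alpha,\alpha_v\rangle=\sum_{s\in\Supp(\alpha)}\lambda_s\langle\alpha_s,\alpha_v\rangle\le\lambda_w\langle\alpha_w,\alpha_v\rangle\le-\lambda_w<-1\,,
\]
because every summand is $\le0$ (as $\langle\alpha_s,\alpha_v\rangle=-2\cos(\pi/m_{s,v})\le0$ for each $s\ne v$) and the $w$-summand satisfies $\langle\alpha_w,\alpha_v\rangle\le-1$ since $m_{w,v}\ge3$. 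Hence $\langle\alpha,\alpha_v\rangle\notin\{0,1,-1\}$, so $\alpha\equiv_1\alpha_v$ and therefore $\alpha\equiv\alpha_v$. The base case $d=1$ is then immediate: $t$ is joined to $t_0\in\Supp(\alpha)$ and $\lambda_{t_0}>1$, so applying the computation with $v=t$, $w=t_0$ gives $\alpha\equiv\alpha_t$.

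For the inductive step ($d\ge2$) I would let $u$ be the neighbour of $t_0$ on a shortest path from $t$ to $t_0$. Then $u\notin\Supp(\alpha)$ (otherwise $d(t,\Supp(\alpha))\le d-1$) and $d(t,u)=d-1$ (the $t$-to-$u$ portion of that path has length $d-1$, and it cannot be shorter lest $d(t,t_0)<d$). The computation applied to $v=u$, $w=t_0$ gives $\langle\alpha_u,\alpha\rangle<-1$ and $\alpha\equiv\alpha_u$. I then set $\beta=u(\alpha)=r_{\alpha_u}(\alpha)=\alpha-\langle\alpha_u,\alpha\rangle\alpha_u$ and verify that the triple $(\beta,t,u)$ satisfies the hypotheses of the lemma with distance $d-1$: the $\alpha_u$-coefficient of $\beta$ is $-\langle\alpha_u,\alpha\rangle>1$ while the remaining coefficients coincide with those of $\alpha$, so $\beta\in V^+\setminus\{0\}$ and hence $\beta\in\Phi^+$ (using $\Phi=\Phi^+\sqcup\Phi^-$ for a root basis); moreover $\Supp(\beta)=\Supp(\alpha)\cup\{u\}$ with the coefficient of $\alpha_u$ exceeding $1$, $t\notin\Supp(\beta)$ since $t\ne u$ and $t\notin\Supp(\alpha)$, and $d(t,\Supp(\beta))=d-1=d(t,u)$ because $d(t,s)\ge d$ for every $s\in\Supp(\alpha)$. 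By the induction hypothesis, $\beta\equiv\alpha_t$. Finally I would observe $\alpha\equiv\beta$: indeed $\alpha\equiv\alpha_u$, $\alpha_u\equiv-\alpha_u$ by Lemma~\ref{lem3_5}(1), and $-\alpha_u=u(\alpha_u)\equiv u(\alpha)=\beta$ by Lemma~\ref{lem3_5}(2) applied with $w=u\in W$. Combining these, $\alpha\equiv\alpha_t$.

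I do not expect a genuine obstacle. The one step that requires care is the bookkeeping in the inductive step: one must check that reflecting $\alpha$ in $u$ keeps it a positive root, enlarges its support by exactly the single vertex $u$, moves the ``coefficient $>1$'' witness from $t_0$ to $u$, and decreases the distance to $t$ by exactly one, so that the induction hypothesis really applies to $\beta$. Once that is in place, passing from $\beta$ back to $\alpha$ is only an appeal to Lemma~\ref{lem3_5}, and the displayed inequality — which also covers the base case — is a direct consequence of the defining properties of the canonical bilinear form.
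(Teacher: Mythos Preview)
Your proof is correct and follows essentially the same approach as the paper's: induction on $d$, the same inequality $\langle\alpha,\alpha_{t_1}\rangle<-1$ (your $u$ is the paper's $t_1$), reflecting to $\alpha'=t_1(\alpha)$ (your $\beta$), and then applying the induction hypothesis to the enlarged support. The only cosmetic difference is that the paper invokes Lemma~\ref{lem3_5}(3) directly to get $\alpha\equiv\alpha'$, whereas you unfold that step via parts (1) and (2).
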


\begin{proof}
We argue by induction on $d = d(t, \Supp (\alpha))$.
There exists a path $t_0, t_1, \dots, t_d$ of length $d$ from $t_0$ to $t$.
We first show that $\alpha \equiv_1 \alpha_{t_1}$.
We have $\langle \alpha, \alpha_{t_1} \rangle = \sum_{s \in S} \lambda_s \langle \alpha_s, \alpha_{t_1} \rangle$.
For each $s \in \Supp (\alpha)$ we have $\lambda_s >0$ and $\langle \alpha_s, \alpha_{t_1} \rangle \le 0$ (since $s \neq t_1$), hence $\langle \alpha, \alpha_{t_1} \rangle \le \lambda_{t_0} \langle \alpha_{t_0}, \alpha_{t_1} \rangle$.
Moreover, we have $\lambda_{t_0} > 1$ and $\langle \alpha_{t_0}, \alpha_{t_1} \rangle \le -1$ (since $m_{t_0, t_1} \ge 3$), hence $\langle \alpha, \alpha_{t_1} \rangle < -1$, and therefore $\alpha \equiv_1 \alpha_{t_1}$.
By Lemma \ref{lem3_5}\,(3) we also have $\alpha \equiv \alpha'$, where $\alpha' = t_1 (\alpha)$.

Now we can assume that $d \ge 2$.
We write $\alpha' = \sum_{s \in S} \lambda_s' \alpha_s$.
We have $\alpha' = \alpha - \langle \alpha, \alpha_{t_1} \rangle \alpha_{t_1}$, hence $\Supp (\alpha') = \Supp (\alpha) \cup \{t_1\}$, $\lambda_{t_1}' = - \langle \alpha, \alpha_{t_1} \rangle >1$, and $d (t, \Supp (\alpha')) = d (t, t_1) = d-1$, therefore, by induction, $\alpha' \equiv \alpha_t$.
Finally, we have $\alpha \equiv \alpha'$ and $\alpha' \equiv \alpha_t$, hence $\alpha \equiv \alpha_t$.
\end{proof}

\begin{lem}\label{lem3_8}
Assume that there are $s,t \in S$ such that $m_{s,t} \ge 4$.
Then the relation $\equiv$ has only one equivalence class.
\end{lem}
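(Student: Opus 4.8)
The plan is to reduce the statement to Lemma \ref{lem3_6} by proving that $\alpha_r \equiv \alpha_{r'}$ for all $r,r'\in S$; since $\equiv$ is transitive it suffices to fix a vertex $s$ and show $\alpha_r\equiv\alpha_s$ for every $r\in S$. The whole argument is powered by the single edge with $m_{s,t}\ge 4$: for such $s,t$ one has $\langle\alpha_s,\alpha_t\rangle=-2\cos(\pi/m_{s,t})\le-\sqrt 2$ (or $\langle\alpha_s,\alpha_t\rangle=-2$ when $m_{s,t}=\infty$), so in particular $\langle\alpha_s,\alpha_t\rangle\notin\{0,1,-1\}$, which gives $\alpha_s\equiv_1\alpha_t$ and hence $\alpha_s\equiv\alpha_t$. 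Writing $c=-\langle\alpha_s,\alpha_t\rangle>1$, I would then introduce the two positive roots $\alpha=s(\alpha_t)=\alpha_t+c\,\alpha_s$ and $\alpha'=t(\alpha_s)=\alpha_s+c\,\alpha_t$ (both lie in $\Phi^+$ since all their coefficients are $\ge 0$). Both have support exactly $\{s,t\}$; in $\alpha$ the coefficient of $\alpha_s$ is $c>1$, and in $\alpha'$ the coefficient of $\alpha_t$ is $c>1$. Moreover, applying Lemma \ref{lem3_5}(3) to the relation $\alpha_s\equiv\alpha_t$ gives $\alpha_s\equiv r_{\alpha_s}(\alpha_t)=\alpha$ and $\alpha_t\equiv r_{\alpha_t}(\alpha_s)=\alpha'$, so $\alpha\equiv\alpha_s\equiv\alpha_t\equiv\alpha'$.

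Next I would propagate equivalence to an arbitrary vertex $r\in S$ by means of Lemma \ref{lem3_7}. If $r\in\{s,t\}$ there is nothing to prove. Otherwise $r\notin\Supp(\alpha)=\{s,t\}$, and since $\Gamma$ is connected the distance $d(r,\{s,t\})=\min(d(r,s),d(r,t))$ is a well-defined integer realized by $s$ or by $t$. When $d(r,s)\le d(r,t)$, the vertex $s$ realizes $d(r,\Supp(\alpha))$ and carries coefficient $c>1$ in $\alpha$, so Lemma \ref{lem3_7} (with $t_0=s$) yields $\alpha\equiv\alpha_r$, whence $\alpha_s\equiv\alpha_r$. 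When $d(r,t)\le d(r,s)$, the symmetric application of Lemma \ref{lem3_7} to $\alpha'$ with $t_0=t$ yields $\alpha'\equiv\alpha_r$, whence again $\alpha_s\equiv\alpha_t\equiv\alpha_r$. Either way $\alpha_r\equiv\alpha_s$.

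Having shown $\alpha_r\equiv\alpha_s$ for all $r\in S$, transitivity gives $\alpha_r\equiv\alpha_{r'}$ for all $r,r'\in S$, and Lemma \ref{lem3_6} then forces all of $\Phi$ to form a single $\equiv$-class, which is exactly the assertion. The only point that needs care — and the reason two auxiliary roots $\alpha$ and $\alpha'$ are needed rather than one — is that Lemma \ref{lem3_7} demands a coefficient \emph{strictly} greater than $1$ at a vertex of the support nearest to $r$; keeping both $\alpha$ and $\alpha'$ available guarantees that whichever of $s,t$ is closer to $r$ is the one carrying the large coefficient. Everything else is routine distance bookkeeping.
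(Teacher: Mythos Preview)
Your proof is correct and follows essentially the same approach as the paper's: both reduce to Lemma~\ref{lem3_6} by using the root $s(\alpha_t)=\alpha_t+c\,\alpha_s$ (with $c=-\langle\alpha_s,\alpha_t\rangle>1$) together with Lemma~\ref{lem3_7} to connect every vertex to $\alpha_s$. The only cosmetic difference is that the paper handles the two cases $d(u,s)\le d(u,t)$ and $d(u,t)\le d(u,s)$ by a ``without loss of generality'' symmetry swap of $s$ and $t$, whereas you make both cases explicit via the second auxiliary root $\alpha'=t(\alpha_s)$.
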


\begin{proof}
We have $\langle \alpha_s, \alpha_t \rangle = -2 \cos (\pi/ m_{s,t}) \le -2 \cos (\pi/4) = - \sqrt{2} < -1$, hence $\alpha_s \equiv \alpha_t$.
By Lemma \ref{lem3_5}\,(3) we also have $\alpha_s \equiv \alpha$ where $\alpha = s (\alpha_t)$.
Let $u \in S \setminus \{s, t\}$.
By Lemma \ref{lem3_6} it suffices to show that either $\alpha_u \equiv \alpha_s$ or $\alpha_u \equiv \alpha_t$.
We can and do assume that $d(u,s) \le d(u,t)$ and we show that $\alpha_u \equiv \alpha_s$.
Set $\alpha = \lambda_s \alpha_s + \alpha_t$.
We have $\Supp (\alpha) = \{s, t\}$, $d(u, \Supp (\alpha)) = d(u, s)$, and $\lambda_s = - \langle \alpha_s, \alpha_t \rangle > 1$, hence, by Lemma \ref{lem3_7}, $\alpha_u \equiv \alpha$.
So, since $\alpha_s \equiv \alpha$, we have $\alpha_u \equiv \alpha_s$.
\end{proof}

\begin{lem}\label{lem3_9}
If $S$ contains a subset $Y$ such that 
\begin{itemize}
\item[(a)]
$\emptyset \neq Y \neq S$, and
\item[(b)]
for all $s \in Y$ there exists $\alpha = \sum_{r \in Y} \lambda_r \alpha_r \in \Phi_Y^+$ such that $\alpha \equiv \alpha_s$ and $\lambda_r >1$ for all $r \in \Supp (\alpha)$,
\end{itemize}
then $\equiv$ has only one equivalence class.
\end{lem}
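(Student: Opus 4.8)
\emph{Proof proposal.} By Lemma \ref{lem3_6} it suffices to show that the simple roots $\alpha_t$, $t \in S$, all lie in one equivalence class modulo $\equiv$; so the plan is to exhibit a single class $C$ and check that every $\alpha_t$ belongs to it.

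First I would fix $s_0 \in Y$ (possible since $Y \neq \emptyset$) and use (b) to choose a root $\alpha_0 = \sum_{r \in Y} \lambda_r \alpha_r \in \Phi_Y^+$ with $\alpha_0 \equiv \alpha_{s_0}$ and $\lambda_r > 1$ whenever $r \in P_0$, where $P_0 = \Supp (\alpha_0)$. Let $C$ be the $\equiv$-class of $\alpha_0$. Since $P_0 \subseteq Y$ and $Y \neq S$, the support $P_0$ is a proper subset of $S$; so for any $t \in S \setminus P_0$ I can pick $t_0 \in P_0$ realizing $d(t,P_0) = d(t,t_0)$ (finite because $\Gamma$ is connected). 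As $\lambda_{t_0} > 1$, Lemma \ref{lem3_7} gives $\alpha_0 \equiv \alpha_t$, so $\alpha_t \in C$ for every $t \in S \setminus P_0$.

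It then remains to treat $t \in P_0 \subseteq Y$. Applying (b) to such a $t$ produces a root $\beta_t \in \Phi_Y^+$ with $\beta_t \equiv \alpha_t$ and all of whose coefficients on $P_t = \Supp (\beta_t) \subseteq Y$ are strictly greater than $1$. Because $P_0 \cup P_t \subseteq Y$ and $Y \neq S$, I can pick a vertex $u \in S \setminus (P_0 \cup P_t)$. Since $u \notin P_t$, Lemma \ref{lem3_7} applied to $\beta_t$ (exactly as above) gives $\beta_t \equiv \alpha_u$; and since $u \notin P_0$, the previous step gives $\alpha_u \in C$. Hence $\alpha_t \equiv \beta_t \equiv \alpha_u \in C$. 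Combining the two cases, $\alpha_t \in C$ for all $t \in S$, and Lemma \ref{lem3_6} finishes the proof.

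The argument is essentially bookkeeping with supports, and the only point that needs care — and the only place where hypothesis (a) is genuinely used — is to guarantee, at each invocation of Lemma \ref{lem3_7}, that the target vertex lies \emph{outside} the support of the root under consideration. This works precisely because all the relevant supports $P_0$ and $P_t$ sit inside the proper subset $Y$, so their union is still proper and an ``outside'' vertex always exists; the ``$\lambda_r > 1$'' clause of (b) is what legitimizes the choice of $t_0$ in the distance condition of Lemma \ref{lem3_7}.
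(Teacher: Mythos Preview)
Your proof is correct and follows essentially the same approach as the paper: use hypothesis (b) together with Lemma \ref{lem3_7} to connect simple roots across the support boundary, then invoke Lemma \ref{lem3_6}. The only cosmetic difference is that the paper partitions $S$ into $Y$ and $S\setminus Y$ (so that any $s\in Y$ and $t\in S\setminus Y$ satisfy $\alpha_s\equiv\alpha_t$ directly via Lemma \ref{lem3_7}, and the remaining cases follow by triangulating through a common vertex), whereas you partition into $P_0=\Supp(\alpha_0)$ and its complement; your choice of $u\in S\setminus(P_0\cup P_t)$ could simply be any $u\in S\setminus Y$, which makes the argument match the paper's almost verbatim.
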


\begin{proof}
Let $s \in Y$ and $t \in S \setminus Y$.
There exists a root $\alpha = \sum_{r \in Y} \lambda_r \alpha_r \in \Phi_Y^+$ such that $\alpha \equiv \alpha_s$ and $\lambda_r >1$ for all $r \in \Supp (\alpha)$.
Then, by Lemma \ref{lem3_7}, $\alpha \equiv \alpha_t$.
So, since $\alpha \equiv \alpha_s$, we have $\alpha_s \equiv \alpha_t$.
Let $s, s' \in Y$.
Since $Y \neq S$, we can take $t \in S \setminus Y$.
By the above, $\alpha_s \equiv \alpha_t$ and $\alpha_{s'} \equiv \alpha_t$, hence $\alpha_s \equiv \alpha_{s'}$.
Let $t, t' \in S \setminus Y$.
Since $Y \neq \emptyset$, we can take $s \in Y$.
By the above, $\alpha_s \equiv \alpha_t$ and $\alpha_s \equiv \alpha_{t'}$, hence $\alpha_t \equiv \alpha_{t'}$.
We conclude by applying Lemma \ref{lem3_6}.
\end{proof}

\begin{lem}\label{lem3_10}
Suppose that $\Gamma$ is one of the following Coxeter graphs of affine type: $\tilde A_m$ ($m \ge 2$), $\tilde D_m$ ($ m \ge 4$), $\tilde E_6$, $\tilde E_7$, $\tilde E_8$.
For each $s \in S$ there exists a root $\alpha = \sum_{r \in S} \lambda_r \alpha_r \in \Phi^+$ such that $\langle \alpha_s , \alpha \rangle = -2$ and $\lambda_r \ge 2$ for all $r \in S$.
\end{lem}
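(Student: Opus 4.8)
The plan is to produce the required root explicitly from the null root of $\Gamma$. Since $\Gamma$ is of affine type, the canonical bilinear form $\langle .,. \rangle$ on $V$ is positive and degenerate, and its radical is spanned by a vector $\delta = \sum_{r \in S} n_r \alpha_r$ whose coordinates $n_r$ are strictly positive integers — the marks of $\Gamma$, read off the tables of Bourbaki \cite{Bourb1}; all of them equal $1$ when $\Gamma = \tilde A_m$. In particular $\delta$ lies in the radical of $\langle .,. \rangle$, so $\langle \delta, x \rangle = 0$ for all $x \in V$, and since $\B$ is the canonical root basis we have $w(\delta) = \delta$ for every $w \in W$. I would then fix a vertex $s_0 \in S$ for which $X := S \setminus \{ s_0 \}$ spans a connected full subgraph $\Gamma_X$ of spherical type; in each of the cases under consideration such an $s_0$ exists and $\Gamma_X$ is of type $A_m$, $D_m$ or $E_n$, in particular it is simply laced. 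By Proposition \ref{prop2_3}, $\B_X$ is a reduced root basis, $W_X = W(\B_X)$ is finite, and $\Phi_X = \Phi \cap V_X$; write $\tilde \theta \in \Phi_X^+$ for the highest root of $\Phi_X$, so that $\alpha_{s_0} = \delta - \tilde \theta$ (see \cite{Bourb1}).

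The heart of the matter is the classical fact that
\[
\Phi = \{ \beta + k \delta \mid \beta \in \Phi_X,\ k \in \Z \}\,,
\]
or at least the inclusion ``$\supseteq$'', which is all that is needed here. One may simply invoke Bourbaki \cite{Bourb1}; otherwise I would prove it inside the present formalism by a translation argument. A direct computation using $\alpha_{s_0} = \delta - \tilde \theta$ and the fact that $\delta$ lies in the radical shows that $t := s_0\, r_{\tilde \theta} \in W$ fixes $\delta$ and acts on $V_X$ by $x \mapsto x - \langle \tilde \theta, x \rangle \delta$. Conjugating $t$ by $W_X$ — and using that $W_X$ acts transitively on $\Phi_X$, since $\Gamma_X$ is connected and simply laced — one obtains, for each $\gamma \in \Phi_X$, an element $t_\gamma \in W$ acting on $V_X$ by $x \mapsto x - \langle \gamma, x \rangle \delta$; these commute, and the subgroup they generate contains an element acting on $V_X$ by $x \mapsto x - \langle \lambda, x \rangle \delta$ for every $\lambda$ in the root lattice $\bigoplus_{r \in X} \Z \alpha_r$. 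Finally, for a fixed $\beta \in \Phi_X$ there is always a vertex $r \in X$ with $\langle \alpha_r, \beta \rangle = \pm 1$ (if $\beta \notin \{ \alpha_r, -\alpha_r \}$ this is because some simple root pairs positively with $\pm \beta$ and the system is simply laced; if $\beta = \pm \alpha_r$, use a neighbour of $r$ in $\Gamma_X$), so $\langle \lambda, \beta \rangle$ runs over all of $\Z$ as $\lambda$ runs over the root lattice; choosing $\lambda$ suitably gives $\beta + k \delta = t_\lambda (\beta) \in \Phi$, since $t_\lambda \in W$ and $\beta \in \Phi$.

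Granting this, the lemma is immediate. Let $s \in S$ and put $\alpha = 3 \delta - \alpha_s$. If $s \neq s_0$ then $-\alpha_s \in \Phi_X$, so $\alpha = (-\alpha_s) + 3 \delta \in \Phi$; if $s = s_0$ then $-\alpha_{s_0} = \tilde \theta - \delta$ with $\tilde \theta \in \Phi_X$, so $\alpha = \tilde \theta + 2 \delta \in \Phi$. In the expansion $\alpha = \sum_{r \in S} \lambda_r \alpha_r$ we have $\lambda_r = 3 n_r \geq 3$ for $r \neq s$ and $\lambda_s = 3 n_s - 1 \geq 2$; thus every $\lambda_r \geq 2$, in particular $\lambda_r > 0$ for all $r$, and therefore $\alpha \in \Phi^+$ because $\Phi = \Phi^+ \sqcup \Phi^-$. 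Lastly $\langle \alpha_s, \alpha \rangle = 3 \langle \alpha_s, \delta \rangle - \langle \alpha_s, \alpha_s \rangle = 0 - 2 = -2$, as required. The one genuinely non-formal point is the displayed description of $\Phi$: deciding which vectors $\beta + k \delta$ are roots is where one must step outside pure bookkeeping with the marks $n_r$ and the canonical form, and I would settle it either by citation or by the translation computation just sketched.
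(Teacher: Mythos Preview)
Your proof is correct, and the overall architecture---use the null root $\delta$, show that translating a root by $\delta$ (or multiples of it) yields a root, then add enough copies of $\delta$---matches the paper. The execution, however, differs in two places.

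First, for the translation fact the paper takes a shorter path than your translation-subgroup argument: it checks case by case that $\langle\alpha_0,\beta\rangle=-2$ for the highest root $\beta$ of $\Phi_{S\setminus\{0\}}$, so that $\alpha_0+\delta=s_0(\beta)\in\Phi$, and then invokes transitivity of $W$ on $\Phi$ (valid because $\Gamma$ itself is connected and simply laced) to conclude $\alpha+\delta\in\Phi$ for every $\alpha\in\Phi$. Iterating gives $\alpha+k\delta\in\Phi$ for all $k\ge 0$. This avoids building the $t_\gamma$'s entirely; in fact your final root $3\delta-\alpha_s=((-\alpha_s)+\delta)+\delta+\delta$ would already follow from this one-line observation, so your translation machinery, while standard and correct, is more than what is needed here.

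Second, your choice of the final root is cleaner than the paper's. The paper picks $w\in W$ with $w(\alpha_0)=\alpha_s$, sets $\alpha'=w(\beta)$, and then adds an unspecified $k\delta$ large enough to push all coordinates to at least $2$; the equality $\langle\alpha_s,\alpha'\rangle=-2$ comes from transporting the case-by-case computation $\langle\alpha_0,\beta\rangle=-2$. Your uniform choice $\alpha=3\delta-\alpha_s$ gives the coordinates $3n_r$ and $3n_s-1$ explicitly and obtains $\langle\alpha_s,\alpha\rangle=-2$ directly from $\langle\alpha_s,\delta\rangle=0$, with no case analysis at that stage. So each approach trades a little work in one place for a little in the other: the paper's translation step is shorter, your endgame is more explicit.
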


\begin{proof}
We number the elements of $S$ as in Bourbaki \cite[Planches]{Bourb1} with the convention that the unnumbered vertex in Bourbaki \cite[Planches]{Bourb1} is here labelled with $0$.
Let $\beta$ denote the greatest root of $\Phi_{S \setminus \{ 0 \}}$.
Here is the value of $\beta$ according to $\Gamma$.
\begin{itemize}
\item
If $\Gamma = \tilde A_m$ ($ m \ge 2$), then $\beta = \alpha_1 + \alpha_2 + \cdots + \alpha_m$.
\item
If $\Gamma = \tilde D_m$ ($ m \ge 4$), then $\beta = \alpha_1 + 2 \alpha_2 + \cdots + 2 \alpha_{m-2} + \alpha_{m-1} + \alpha_m$.
\item
If $\Gamma = \tilde E_6$, then $\beta = \alpha_1 + 2 \alpha_2 + 2 \alpha_3 + 3 \alpha_4 + 2 \alpha_5 + \alpha_6$.
\item
If $\Gamma = \tilde E_7$, then $\beta = 2 \alpha_1 + 2 \alpha_2 + 3 \alpha_3 + 4 \alpha_4 + 3 \alpha_5 + 2 \alpha_6 + \alpha_7$.
\item
If $\Gamma = \tilde E_8$, then $\beta = 2 \alpha_1 + 3 \alpha_2 + 4 \alpha_3 + 6 \alpha_4 + 5 \alpha_5 + 4 \alpha_6 + 3 \alpha_7 + 2 \alpha_8$.
\end{itemize}
We observe in each case that $\langle \alpha_0, \beta \rangle = -2$.
In particular, we have $s_0 (\beta) = 2 \alpha_0 + \beta$.
Set $\delta = \alpha_0 + \beta$.
Then $\alpha_0 + \delta = s_0 (\beta) \in \Phi$.

Observe that all the coordinates of $\delta$ in the basis $\Pi$ are $\ge 1$.
On the other hand, for each root $\alpha \in \Phi$ we have $(\alpha + \delta) \in \Phi$ and $\langle \alpha, \delta \rangle = 0$.
Indeed, we see case by case that, for each $s \in S$, $\langle \alpha_s, \delta \rangle = 0$.
Subsequently $\langle \alpha, \delta \rangle = 0$ for all $\alpha \in \Phi$ and $w (\delta) = \delta$ for all $w \in W$.
Now, $\Gamma$ is connected and simply laced, hence for each $\alpha \in \Phi$ there exists $w \in W$ such that $\alpha = w (\alpha_0)$.
So, $\alpha + \delta = w (\alpha_0 + \delta) = (w s_0) (\beta) \in \Phi$.

Let $s \in S$.
Since $\Gamma$ is simply laced and connected there exists $w \in W$ such that $\alpha_s = w(\alpha_0)$.
We set $\alpha' = w(\beta)$ and we choose $k \ge 1$ so that all the coordinates of $\alpha = \alpha' + k \delta$ in the basis $\Pi$ are $\ge 2$.
Then, by the above, $\alpha \in \Phi$ and $\langle \alpha_s, \alpha \rangle = \langle \alpha_s, \alpha' \rangle = \langle w(\alpha_0), w (\beta) \rangle = \langle \alpha_0, \beta \rangle = -2$.
\end{proof}

\begin{corl}\label{corl3_11}
Suppose that there is a proper subset $Y$ of $S$ such that $\Gamma_Y$ is one of the following Coxeter graphs of affine type: $\tilde A_m$ ($m \ge 2$), $\tilde D_m$ ($m \ge 4$), $\tilde E_6$, $\tilde E_7$, $\tilde E_8$.
Then the relation $\equiv$ has only one equivalence class.
\end{corl}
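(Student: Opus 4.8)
The plan is to derive this immediately from Lemma \ref{lem3_9}, taking the subset of that lemma to be the given $Y$. So the work reduces to verifying that $Y$ satisfies conditions (a) and (b) of Lemma \ref{lem3_9}. Condition (a) is essentially free: $Y$ is a proper subset of $S$ by hypothesis, so $Y\neq S$, and $Y\neq\emptyset$ since $\Gamma_Y$ is an affine type Coxeter graph and hence has at least one vertex.

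For condition (b) the idea is to feed Lemma \ref{lem3_10} into Lemma \ref{lem3_9}. First I would record the (routine) compatibility remark that, by Proposition \ref{prop2_3}, $\B_Y=(Y,V_Y,\Pi_Y,\RR_Y)$ is a reduced root basis with $\Phi(\B_Y)=\Phi\cap V_Y$ and $W_Y=W(\B_Y)$, and that the restriction of $\langle .,.\rangle$ to $V_Y$ is the canonical bilinear form of $\Gamma_Y$; thus $\B_Y$ is the canonical root basis of $\Gamma_Y$ and $\Phi_Y^+=\Phi^+\cap V_Y$. Since $\Gamma_Y$ is one of $\tilde A_m$ $(m\ge 2)$, $\tilde D_m$ $(m\ge 4)$, $\tilde E_6$, $\tilde E_7$, $\tilde E_8$, Lemma \ref{lem3_10} applies to $\Gamma_Y$ and yields, for each $s\in Y$, a root $\alpha=\sum_{r\in Y}\lambda_r\alpha_r\in\Phi_Y^+$ with $\langle\alpha_s,\alpha\rangle=-2$ and $\lambda_r\ge 2$ for all $r\in Y$. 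Because $-2\notin\{0,1,-1\}$ we get $\alpha_s\equiv_1\alpha$, hence $\alpha_s\equiv\alpha$; and since $\lambda_r\ge 2>1$ for every $r\in Y$ we have $\Supp(\alpha)=Y$ with $\lambda_r>1$ for all $r\in\Supp(\alpha)$. This is precisely condition (b) of Lemma \ref{lem3_9}, so Lemma \ref{lem3_9} gives that $\equiv$ has only one equivalence class.

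The argument is short and there is no serious obstacle. The only point that needs a careful word is the transfer step: one must make sure that the equivalence relation $\equiv$ on the ambient $\Phi$ behaves well with respect to $\Phi_Y$, i.e.\ that Lemma \ref{lem3_10}, which is stated for a graph equal to its whole vertex set, legitimately applies to the full subgraph $\Gamma_Y$ with its induced canonical data. That is exactly what the compatibility remark above (Proposition \ref{prop2_3} together with the restriction of the canonical bilinear form) takes care of; everything else is bookkeeping.

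\begin{proof}
We apply Lemma \ref{lem3_9} to the subset $Y$. Condition (a) holds since $Y$ is a proper subset of $S$ and $\Gamma_Y$, being an affine type Coxeter graph, is nonempty.

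By Proposition \ref{prop2_3}, $\B_Y=(Y,V_Y,\Pi_Y,\RR_Y)$ is a reduced root basis with $\Phi(\B_Y)=\Phi\cap V_Y$ and $W_Y=W(\B_Y)$, and the restriction of $\langle .,.\rangle$ to $V_Y$ is the canonical bilinear form of $\Gamma_Y$. Hence $\B_Y$ is the canonical root basis of $\Gamma_Y$ and $\Phi_Y^+=\Phi^+\cap V_Y$. Since $\Gamma_Y$ is one of $\tilde A_m$ $(m\ge 2)$, $\tilde D_m$ $(m\ge 4)$, $\tilde E_6$, $\tilde E_7$, $\tilde E_8$, Lemma \ref{lem3_10} applied to $\Gamma_Y$ gives, for each $s\in Y$, a root $\alpha=\sum_{r\in Y}\lambda_r\alpha_r\in\Phi_Y^+$ with $\langle\alpha_s,\alpha\rangle=-2$ and $\lambda_r\ge 2$ for all $r\in Y$.

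Fix $s\in Y$ and take such an $\alpha$. Since $\langle\alpha_s,\alpha\rangle=-2\notin\{0,1,-1\}$ we have $\alpha_s\equiv_1\alpha$, hence $\alpha_s\equiv\alpha$. Moreover $\lambda_r\ge 2>1$ for all $r\in Y$, so $\Supp(\alpha)=Y$ and $\lambda_r>1$ for all $r\in\Supp(\alpha)$. Thus condition (b) of Lemma \ref{lem3_9} holds, and we conclude that $\equiv$ has only one equivalence class.
\end{proof}
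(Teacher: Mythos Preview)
Your proof is correct and follows exactly the same approach as the paper, which simply states that the corollary follows from Lemma~\ref{lem3_9} and Lemma~\ref{lem3_10}. You have spelled out the verification of conditions (a) and (b) of Lemma~\ref{lem3_9} in detail, including the compatibility remark (via Proposition~\ref{prop2_3}) that justifies applying Lemma~\ref{lem3_10} to the subgraph $\Gamma_Y$; this is precisely the content the paper leaves implicit.
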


\begin{proof}
This follows from Lemma \ref{lem3_9} and Lemma \ref{lem3_10}.
\end{proof}

\begin{lem}\label{lem3_12}
Suppose that $\Gamma$ is simply laced. 
Then one of the following three assertions is satisfied. 
\begin{itemize}
\item[(i)]
$\Gamma$ is one of the following Coxeter graphs of spherical type: $A_m$ ($m \ge 1$), $D_m$ ($m \ge 4$), $E_6$, $E_7$, $E_8$.
\item[(ii)]
$\Gamma$ is one of the following locally spherical Coxeter graphs: $A_\infty$, ${}_\infty A_\infty$, $D_\infty$.
\item[(iii)]
There exists a subset $Y$ of $S$ such that $\Gamma_Y$ is one of the following Coxeter graphs of affine type: $\tilde A_m$ ($m \ge 2$), $\tilde D_m$ ($m \ge 4$), $\tilde E_6$, $\tilde E_7$, $\tilde E_8$.
\end{itemize}
\end{lem}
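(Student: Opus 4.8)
Since $\Gamma$ is simply laced, every edge is unlabelled, so for this statement $\Gamma$ is nothing more than a connected simple graph and the group $G$ plays no role; the plan is to prove the contrapositive. So I would assume that $\Gamma$ contains no full subgraph isomorphic to $\tilde A_m$ ($m\ge 2$), $\tilde D_m$ ($m\ge 4$), $\tilde E_6$, $\tilde E_7$ or $\tilde E_8$, and deduce that $\Gamma$ is one of the graphs in list (i) or (ii). The whole thing is a combinatorial classification, and the key simplification I would lean on throughout is that, once $\Gamma$ is known to be a tree, every subgraph I exhibit is automatically a \emph{full} subgraph, since in a tree there are no superfluous adjacencies; likewise a shortest cycle of $\Gamma$ is automatically chordless.

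\textbf{Cutting down the shape of $\Gamma$.} The first three steps force $\Gamma$ into a very rigid form. First, if $\Gamma$ had a cycle, a shortest one would be chordless of length $\ell\ge 3$, hence a full subgraph isomorphic to $\tilde A_{\ell-1}$ with $\ell-1\ge 2$; so $\Gamma$ is a tree. Second, if some vertex had degree $\ge 4$, that vertex together with four of its neighbours would span a full $\tilde D_4$; so every vertex has degree $\le 3$. Third, if $\Gamma$ had two distinct vertices $b_1\ne b_2$ of degree $3$, then, along the unique path $b_1=c_0,c_1,\dots,c_k=b_2$ of the tree, the vertices $c_0,\dots,c_k$ together with the two neighbours of $b_1$ and the two neighbours of $b_2$ lying off this path would span a full $\tilde D_{k+4}$ with $k+4\ge 5$; so $\Gamma$ has at most one vertex of degree $3$, every other vertex having degree $\le 2$.

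\textbf{Finishing the classification.} Two cases remain. If every vertex of $\Gamma$ has degree $\le 2$, the connected graph $\Gamma$ is a path, hence $A_m$ if finite, $A_\infty$ if infinite in one direction, and ${}_\infty A_\infty$ if infinite in both directions, so (i) or (ii) holds. Otherwise $\Gamma$ has a unique vertex $b$ of degree $3$; deleting $b$ leaves three components, each of which has all degrees $\le 2$ and has the neighbour of $b$ as a vertex of degree $\le 1$, hence is a path ending at that neighbour. Thus $\Gamma$ is a ``$Y$-graph'' with three arms issuing from $b$, of lengths $1\le p\le q\le r\le\infty$ in edges. By taking initial segments of the three arms at $b$ one obtains a full $\tilde E_6$ (arms $(2,2,2)$) as soon as $p\ge 2$, a full $\tilde E_7$ (arms $(1,3,3)$) as soon as $p=1$ and $q\ge 3$, and a full $\tilde E_8$ (arms $(1,2,5)$) as soon as $p=1$, $q=2$ and $r\ge 5$. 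Since none of these is allowed, we are forced to $p=1$ and $q\le 2$, with $r\le 4$ whenever $q=2$. This leaves exactly the arm-triples $(1,1,r)$ with $1\le r\le\infty$, which give $D_{r+3}$ for $r$ finite and $D_\infty$ for $r=\infty$, and $(1,2,2)$, $(1,2,3)$, $(1,2,4)$, which give $E_6$, $E_7$, $E_8$. In every case $\Gamma$ lies in list (i) or (ii), which finishes the proof.

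\textbf{Where the difficulty is.} There is no real conceptual obstacle here; the argument is bookkeeping on trees. The two things to be careful about are using the right shapes for the affine diagrams ($\tilde D_4$ is the $4$-star, and $\tilde E_6$, $\tilde E_7$, $\tilde E_8$ are the $Y$-graphs with arms $(2,2,2)$, $(1,3,3)$, $(1,2,5)$ respectively) and verifying each time that the subgraph produced is genuinely \emph{full} --- which is precisely where I use that $\Gamma$ is a tree (and, for cycles, that a shortest one is chordless). The clean way to run the last case is to note that a $Y$-graph with sorted arm-lengths $(p,q,r)$ contains a $Y$-graph with sorted arm-lengths $(p',q',r')$ as a full subgraph if and only if $p'\le p$, $q'\le q$ and $r'\le r$, because the branch vertex of the smaller one must be mapped to $b$, the only vertex of $\Gamma$ of degree $\ge 3$.
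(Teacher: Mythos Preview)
Your proof is correct and follows essentially the same route as the paper's: both argue by successively ruling out cycles (finding $\tilde A_m$), high-degree or multiple branch vertices (finding $\tilde D_m$), and then analysing the three arm lengths of the resulting $Y$-shaped tree to isolate $\tilde E_6$, $\tilde E_7$, $\tilde E_8$ or else land in the $A$/$D$/$E$ lists. The only cosmetic difference is ordering --- the paper treats two branch vertices before valence $\ge 4$, you do the reverse --- and you make explicit the ``full subgraph'' point that the paper leaves implicit.
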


\begin{proof}
If $\Gamma$ contains a circuit, then there is a subset $Y$ of $S$ such that $\Gamma_Y = \tilde A_m$ ($ m \ge 2$).
So, we can assume that $\Gamma$ is a tree.
If all the vertices of $\Gamma$ are of valence $\le 2$, then $\Gamma \in \{ A_m \mid m \ge 1 \} \cup \{A_\infty, {}_\infty A_\infty\}$.
So, we can assume that $\Gamma$ has at least one vertex of valence $\ge 3$.
If $\Gamma$ has at least two vertices of valence $\ge 3$, then there exists a subset $Y$ of $S$ such that $\Gamma_Y = \tilde D_m$ ($m \ge 5$).
So, we can assume that $\Gamma$ has a unique vertex $s_0$ of valence $\ge 3$.
If the valence of $s_0$ is $\ge 4$, then there is a subset $Y$ of $S$ such that $\Gamma_Y = \tilde D_4$.
So, we can assume that the valence of $s_0$ is $3$.
We denote by $\ell_1, \ell_2, \ell_3$ the (finite or infinite) lengths of the branches from $s_0$ and we assume that $1 \le \ell_1 \le \ell_2 \le \ell_3 \le \infty$.
If $\ell_1 \ge 2$, then there is a subset $Y$ of $S$ such that $\Gamma_Y = \tilde E_6$.
So, we can assume that $\ell_1 = 1$.
If $\ell_2 \ge 3$, then there is a subset $Y$ of $S$ such that $\Gamma_Y = \tilde E_7$.
If $\ell_2 = 1$ and $\ell_3 < \infty$, then $\Gamma = D_m$ where $m = \ell_3+3$.
If $\ell_2 = 1$ and $\ell_3 = \infty$, then $\Gamma = D_\infty$. 
So, we can assume that $\ell_2 = 2$.
If $\ell_3 = 2,3,4$, then $\Gamma = E_6, E_7, E_8$, respectively.
If $\ell_3 \ge 5$, then there is a subset $Y$ of $S$ such that $\Gamma_Y = \tilde E_8$.
\end{proof}

\begin{corl}\label{corl3_13}
If the relation $\equiv$ has at least two equivalence classes, then $\Gamma$ is one of the following Coxeter graphs: $A_m$ ($m \ge 1$), $D_m$ ($m \ge 4$), $E_m$ ($6 \le m \le 8$), $\tilde A_m$ ($ m \ge 2$), $\tilde D_m$ ($m \ge 4$), $\tilde E_m$ ($6 \le m \le 8$), $A_\infty$, ${}_\infty A_\infty$, $D_\infty$.
\end{corl}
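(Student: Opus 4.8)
The plan is to obtain the statement by assembling Lemma~\ref{lem3_8}, Lemma~\ref{lem3_12} and Corollary~\ref{corl3_11}: no genuinely new argument is required, and the task is simply to organize a short case analysis. So, suppose that $\equiv$ has at least two equivalence classes. The first step is to apply the contrapositive of Lemma~\ref{lem3_8}: if there were $s,t \in S$ with $m_{s,t} \ge 4$, then $\equiv$ would have a single equivalence class, so in fact $m_{s,t} \in \{2,3\}$ for all $s \neq t$ in $S$; that is, $\Gamma$ is simply laced.

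Next I would invoke Lemma~\ref{lem3_12}, which offers three alternatives for a simply laced $\Gamma$. In case (i) of that lemma, $\Gamma$ is one of $A_m$ ($m \ge 1$), $D_m$ ($m \ge 4$), $E_6$, $E_7$, $E_8$, and in case (ii) it is one of $A_\infty$, ${}_\infty A_\infty$, $D_\infty$; in either situation $\Gamma$ already appears in the list of the statement, bearing in mind that ``$E_m$ ($6 \le m \le 8$)'' abbreviates $E_6, E_7, E_8$.

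It then remains to treat case (iii) of Lemma~\ref{lem3_12}: there is a subset $Y$ of $S$ such that $\Gamma_Y$ is one of the affine graphs $\tilde A_m$ ($m \ge 2$), $\tilde D_m$ ($m \ge 4$), $\tilde E_6$, $\tilde E_7$, $\tilde E_8$. Here I would split according to whether $Y$ is a proper subset of $S$. If $Y \neq S$, then $Y$ is a proper subset and Corollary~\ref{corl3_11} forces $\equiv$ to have exactly one equivalence class, contradicting the hypothesis; so this situation does not arise. Hence $Y = S$, and $\Gamma = \Gamma_Y$ is itself one of $\tilde A_m$ ($m \ge 2$), $\tilde D_m$ ($m \ge 4$), $\tilde E_m$ ($6 \le m \le 8$), all of which belong to the list. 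Since the three cases of Lemma~\ref{lem3_12} are exhaustive, this completes the argument.

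The only point requiring a little care --- and the nearest thing to an obstacle --- is that the subset $Y$ supplied by case (iii) of Lemma~\ref{lem3_12} need not be proper, so one must carefully separate the case $Y = S$ (where $\Gamma$ is an affine ADE graph, which is in the list) from the case where $Y$ is a proper subset of $S$ (where Corollary~\ref{corl3_11} applies and rules out the situation); observe that when $\Gamma$ is infinite, case (iii) is in fact automatically impossible, since affine graphs are finite and so $Y$ would necessarily be proper.
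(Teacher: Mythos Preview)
Your proposal is correct and follows exactly the same route as the paper, which simply states that the corollary follows from Lemma~\ref{lem3_8}, Corollary~\ref{corl3_11} and Lemma~\ref{lem3_12}. You have merely spelled out in detail the case analysis that the paper leaves implicit, including the split $Y=S$ versus $Y\subsetneq S$ in case~(iii).
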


\begin{proof}
This follows from Lemma \ref{lem3_8}, Corollary \ref{corl3_11} and Lemma \ref{lem3_12}.
\end{proof}

Now we start the second part of the proof of Proposition \ref{prop2_11}.

\begin{lem}\label{lem3_14}
Assume that $(\Gamma, G)$ has the $\hat \Phi^+$-basis property.
Let $\alpha, \beta \in \Phi$ and $g \in G$ such that $\beta = g (\alpha) \neq \alpha$.
Then $\langle \alpha, \beta \rangle = 0$.
\end{lem}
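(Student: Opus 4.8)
The plan is to reduce the statement to the purely combinatorial reformulation of the $\hat\Phi^+$-basis property provided by Proposition \ref{prop3_3}, combined with the structural restriction of Lemma \ref{lem3_4} (equivalently Lemma \ref{lem3_2}). So first I would invoke Proposition \ref{prop3_3}: since $(\Gamma,G)$ has the $\hat\Phi^+$-basis property, every root of $\Phi$ is of the form $w(\alpha_s)$ with $w\in W^G$ and $s\in S$. Fix such a writing $\alpha = w(\alpha_s)$.

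Next I would transport this description to $\beta$ using the equivariance of the representation $f$ together with the fact that $w$ is $G$-fixed. Concretely, for $w\in W^G$ and $x\in V$ one has $g(w(x)) = (g(w))(g(x)) = w(g(x))$, since $g(w)=w$; applying this with $x=\alpha_s$ gives $\beta = g(\alpha) = g(w(\alpha_s)) = w(g(\alpha_s)) = w(\alpha_{g(s)})$. Because $w$ preserves the canonical bilinear form $\langle\,.\,,.\,\rangle$, it follows that $\langle\alpha,\beta\rangle = \langle w(\alpha_s),w(\alpha_{g(s)})\rangle = \langle\alpha_s,\alpha_{g(s)}\rangle$.

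Finally I would use the hypothesis $\beta\neq\alpha$: it forces $\alpha_{g(s)}\neq\alpha_s$, that is $g(s)\neq s$, so $s$ and $g(s)$ are two distinct elements of one and the same orbit $X$ of $S$ under $G$. By Lemma \ref{lem3_4} (whose hypothesis is precisely the $\hat\Phi^+$-basis property) the full subgraph $\Gamma_X$ is a finite union of isolated vertices, hence $m_{s,g(s)}=2$, and therefore $\langle\alpha_s,\alpha_{g(s)}\rangle = -2\cos(\pi/2) = 0$. Combining with the previous step yields $\langle\alpha,\beta\rangle=0$, as desired. The only point requiring a little care is the equivariance bookkeeping in the second step (making sure $g$ and $w$ commute because $w\in W^G$); once that is in place the whole argument is immediate from the already established results.
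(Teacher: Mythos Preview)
Your proof is correct and is essentially identical to the paper's own argument: both invoke Proposition~\ref{prop3_3} to write $\alpha=w(\alpha_s)$ with $w\in W^G$, use equivariance to get $\beta=w(\alpha_{g(s)})$, deduce $g(s)\neq s$ from $\beta\neq\alpha$, apply Lemma~\ref{lem3_4} to obtain $m_{s,g(s)}=2$, and conclude via the $W$-invariance of $\langle\,.\,,.\,\rangle$.
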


\begin{proof}
By Proposition \ref{prop3_3} there exist $w \in W^G$ and $s \in S$ such that $\alpha = w (\alpha_s)$.
Set $t = g(s)$.
We have $\alpha_t = g(\alpha_s)$, hence $\beta = g (\alpha) = g (w (\alpha_s)) = w (g (\alpha_s)) = w (\alpha_t)$.
Since $\beta \neq \alpha$, we have $\alpha_t \neq \alpha_s$, thus $t \neq s$.
By Lemma \ref{lem3_4} we have $m_{s,t} = 2$, hence $\langle \alpha_s, \alpha_t \rangle = 0$, and therefore $\langle \alpha, \beta \rangle = \langle w (\alpha_s), w (\alpha_t) \rangle = \langle \alpha_s, \alpha_t \rangle = 0$.
\end{proof}

\begin{lem}\label{lem3_15}
Suppose that $(\Gamma, G)$ has the $\hat \Phi^+$-basis property. 
Let $\alpha, \beta \in \Phi$ and $g \in G$ such that $g (\alpha) = \alpha$ and $\langle \alpha, \beta \rangle \not \in \{ 0, 1, -1 \}$.
Then $g (\beta) = \beta$.
\end{lem}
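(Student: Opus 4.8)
The plan is to argue by contradiction: assume $g(\beta)\neq\beta$, and manufacture from $\beta$ an auxiliary root to which Lemma~\ref{lem3_14} can be applied a \emph{second} time, so as to force $\langle\alpha,\beta\rangle^{2}=1$, contradicting the hypothesis $\langle\alpha,\beta\rangle\notin\{0,1,-1\}$.

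First I would record the elementary consequences of the hypotheses. Write $c=\langle\alpha,\beta\rangle$ and $\beta'=g(\beta)$. Since we are assuming $g(\beta)\neq\beta$, Lemma~\ref{lem3_14} applied to the pair $(\beta,\beta')$ (here $\beta'=g(\beta)\neq\beta$) gives $\langle\beta,\beta'\rangle=0$. Moreover, $g$ preserves the canonical bilinear form and $g(\alpha)=\alpha$, so $\langle\alpha,\beta'\rangle=\langle g(\alpha),g(\beta)\rangle=\langle\alpha,\beta\rangle=c$.

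The crux of the argument is to consider the vector $\eta=r_\beta(\alpha)=\alpha-c\,\beta$. It lies in $\Phi$ because $r_\beta\in W$ for every $\beta\in\Phi$ and $\alpha\in\Phi$. Applying $g$, which acts linearly on $V$, we get $g(\eta)=g(\alpha)-c\,g(\beta)=\alpha-c\,\beta'$; since $c\neq0$ and $\beta'\neq\beta$, we have $g(\eta)\neq\eta$. Hence Lemma~\ref{lem3_14} applies once more, now to the pair $(\eta,g(\eta))$, and yields $\langle\eta,g(\eta)\rangle=0$. Expanding $\langle\alpha-c\,\beta,\alpha-c\,\beta'\rangle$ and using $\langle\alpha,\alpha\rangle=2$ (the root system is the canonical one), together with $\langle\alpha,\beta\rangle=\langle\alpha,\beta'\rangle=c$ and $\langle\beta,\beta'\rangle=0$, gives $\langle\eta,g(\eta)\rangle=2-2c^{2}$. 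Thus $c^{2}=1$, i.e. $c\in\{1,-1\}$, contradicting $\langle\alpha,\beta\rangle\notin\{0,1,-1\}$. Therefore $g(\beta)=\beta$.

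I do not expect a genuine obstacle here: the whole proof is a two-line computation once one has the idea of feeding the root $r_\beta(\alpha)$ back into Lemma~\ref{lem3_14}. The only points that require a little care are checking that $r_\beta(\alpha)$ is really a root, that $g$ moves it (so that Lemma~\ref{lem3_14} is applicable), and the bookkeeping of the three inner products $\langle\alpha,\beta\rangle$, $\langle\alpha,g(\beta)\rangle$, $\langle\beta,g(\beta)\rangle$, the last of which is where the $\hat\Phi^{+}$-basis property enters (through Lemma~\ref{lem3_14}).
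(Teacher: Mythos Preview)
Your proof is correct and follows essentially the same approach as the paper: both argue by contradiction, set up the auxiliary root $r_\beta(\alpha)$, apply Lemma~\ref{lem3_14} to both $\beta$ and $r_\beta(\alpha)$, and expand $\langle r_\beta(\alpha),\,g(r_\beta(\alpha))\rangle$ to obtain $2-2\langle\alpha,\beta\rangle^{2}=0$. The only cosmetic difference is that the paper computes the expansion first (keeping the term $\langle\alpha,\beta\rangle^{2}\langle\beta,g(\beta)\rangle$) and then invokes Lemma~\ref{lem3_14} to kill both $\langle\beta,g(\beta)\rangle$ and $\langle\gamma,g(\gamma)\rangle$ at once, whereas you dispose of $\langle\beta,g(\beta)\rangle$ before expanding.
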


\begin{proof}
Set $\gamma = r_\beta (\alpha)$.
We have $\gamma = \alpha - \langle \alpha, \beta \rangle \beta$, hence $g (\gamma) = g (\alpha) - \langle \alpha, \beta \rangle g(\beta) = \alpha - \langle \alpha, \beta \rangle g(\beta)$, and therefore 
\begin{gather*}
\langle \gamma, g (\gamma) \rangle =
\langle \alpha - \langle \alpha, \beta \rangle \beta, \alpha - \langle \alpha, \beta \rangle g (\beta) \rangle =\\
\langle \alpha, \alpha \rangle - \langle \alpha, \beta \rangle \langle \beta, \alpha \rangle - \langle \alpha , \beta \rangle \langle \alpha, g (\beta) \rangle + \langle \alpha, \beta \rangle^2 \langle \beta, g (\beta) \rangle =\\
2 - 2 \langle \alpha, \beta \rangle^2 + \langle \alpha, \beta \rangle^2 \langle \beta, g (\beta) \rangle\,,
\end{gather*}
since $\langle \alpha, \alpha \rangle = 2$ and $\langle \alpha, g (\beta) \rangle = \langle g (\alpha), g (\beta) \rangle = \langle \alpha, \beta \rangle$.
Suppose that $g (\beta) \neq \beta$.
Then, since $\langle \alpha, \beta \rangle \neq 0$, we also have $g (\gamma) \neq \gamma$.
Then, by Lemma \ref{lem3_14}, we get $\langle \beta, g (\beta) \rangle = 0$ and $\langle \gamma, g(\gamma) \rangle = 0$.
It follows that $2 - 2 \langle \alpha, \beta \rangle^2 = 0$, hence $\langle \alpha, \beta \rangle^2 = 1$, which is a contradiction as $\langle \alpha, \beta \rangle \not\in \{ 1, -1 \}$.
So, $g (\beta) = \beta$.
\end{proof}

\begin{lem}\label{lem3_16}
Suppose that $(\Gamma, G)$ has the $\hat \Phi^+$-basis property. 
Let $g \in G$.
Then there exists $s \in S$ such that $g(s) = s$.
\end{lem}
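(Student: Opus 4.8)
The plan is to argue by contradiction: assume $(\Gamma,G)$ has the $\hat\Phi^+$-basis property and that $g$ fixes no vertex of $S$, and produce a root $\alpha\in\Phi$ with $g(\alpha)\neq\alpha$ and $\langle\alpha,g(\alpha)\rangle\neq 0$, contradicting Lemma \ref{lem3_14}. First I would observe that $g$ fixes no root at all: by Proposition \ref{prop3_3} every $\alpha\in\Phi$ has the form $w(\alpha_s)$ with $w\in W^G$, $s\in S$, and if $g(\alpha)=\alpha$ then $w(\alpha_{g(s)})=g(w(\alpha_s))=\alpha=w(\alpha_s)$, forcing $g(s)=s$, a contradiction. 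Hence Lemma \ref{lem3_14} applies to every root and gives $\langle\alpha,g(\alpha)\rangle=0$ for all $\alpha\in\Phi$ --- this is the equality I want to violate.

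To find the offending root I use that $\Gamma$ is connected. Since $g$ fixes no vertex, $d(s,g(s))\ge1$ for every $s\in S$; let $D$ be the minimum of these distances, pick $s$ attaining it, and fix a geodesic $s=r_0,r_1,\dots,r_D=g(s)$. Put $Z=\{r_0,\dots,r_{D-1}\}$. Any subpath of a geodesic is a geodesic, so $d(r_i,r_j)=|i-j|$ for $0\le i,j\le D$; and since $g(r_j)\neq r_j$, minimality of $D$ forces $d(r_j,g(r_j))\ge D$. Therefore $r_i\neq g(r_j)$ whenever $i,j\in\{0,\dots,D-1\}$ --- otherwise $d(r_j,g(r_j))=|i-j|\le D-1$ --- that is, $Z\cap g(Z)=\emptyset$.

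Now $\Gamma_Z$ is connected, since it contains the path $r_0-r_1-\cdots-r_{D-1}$, so $\Phi_Z$ contains a root $\alpha=\sum_{v\in Z}\lambda_v\alpha_v$ with all $\lambda_v>0$; such a root is built by a short induction on $|Z|$ (remove a leaf $\ell$ of a spanning tree of $\Gamma_Z$, take a full-support positive root $\gamma$ of $\Phi_{Z\setminus\{\ell\}}$, and note that $\langle\alpha_\ell,\gamma\rangle<0$, so $r_\ell(\gamma)=\gamma-\langle\alpha_\ell,\gamma\rangle\alpha_\ell$ has full support $Z$ and positive coordinates), and $\alpha\in\Phi$ by Proposition \ref{prop2_3}. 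Then $g(\alpha)=\sum_{v\in Z}\lambda_v\alpha_{g(v)}$ is supported on $g(Z)$, which is disjoint from $\Supp(\alpha)=Z$, so
\[
\langle\alpha,g(\alpha)\rangle=\sum_{v\in Z}\sum_{w\in g(Z)}\lambda_v\,\lambda_{g^{-1}(w)}\,\langle\alpha_v,\alpha_w\rangle
\]
is a sum of non-positive terms; the term for $v=r_{D-1}$ and $w=r_D=g(r_0)$ is strictly negative, because $r_{D-1}\sim r_D$ and $\lambda_{r_{D-1}},\lambda_{r_0}>0$. Hence $\langle\alpha,g(\alpha)\rangle<0$, the desired contradiction, and $g$ must fix a vertex.

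I expect the crux to be choosing $s$ at minimal displacement $D$: this is exactly what makes $\Supp(\alpha)$ and $\Supp(g(\alpha))$ disjoint, hence $\langle\alpha,g(\alpha)\rangle$ a sum of non-positive numbers that is strictly negative as soon as one geodesic edge joins $Z$ to $g(Z)$; without the minimality one might pick up a positive diagonal contribution $2\lambda_v\lambda_{g^{-1}(v)}$ from $Z\cap g(Z)$ that spoils the sign. Checking that a connected, possibly labelled and possibly non-spherical graph $\Gamma_Z$ carries a full-support positive root is routine but should be written out carefully.
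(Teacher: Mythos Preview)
Your proof is correct and follows essentially the same approach as the paper's: argue by contradiction, pick $s$ minimizing $D=d(s,g(s))$, show via minimality that the geodesic segment $Z=\{r_0,\dots,r_{D-1}\}$ is disjoint from $g(Z)$, build a positive root $\alpha$ supported on $Z$, and observe that $\langle\alpha,g(\alpha)\rangle<0$ because the supports are disjoint while $r_{D-1}$ and $r_D=g(r_0)$ are adjacent, contradicting Lemma~\ref{lem3_14}.

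Two minor remarks. First, your preliminary observation that $g$ fixes no root at all is correct but unnecessary: once $\Supp(\alpha)=Z$ and $\Supp(g(\alpha))=g(Z)$ are disjoint (and non-empty), $g(\alpha)\neq\alpha$ follows immediately, which is all Lemma~\ref{lem3_14} requires. Second, your inductive construction of a full-support root on an arbitrary connected $\Gamma_Z$ is more general than needed. Since $r_0,\dots,r_D$ is a geodesic, no $r_i$ and $r_j$ with $|i-j|\ge 2$ can be adjacent (that would shorten the path), so $\Gamma_Z$ is literally a path; the paper simply takes $\alpha=(r_0 r_1\cdots r_{D-2})(\alpha_{r_{D-1}})$, which is the obvious full-support root of a type $A$ segment. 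The paper also notes $D\ge 2$ via Lemma~\ref{lem3_4}, but your argument handles $D=1$ as well, so nothing is lost by omitting that.
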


\begin{proof}
Suppose instead that $g(s) \neq s$ for all $s \in S$.
Take an element $s$ in $S$ such that the distance $\ell = d (s, g(s))$ is minimal.
By Lemma \ref{lem3_4} we have $\ell \ge 2$.
Let $s = s_0, s_1, \dots, s_\ell = g(s)$ be a path of length $\ell$ from $s$ to $g(s)$.
The minimality of $\ell$ implies that the sets $\{ s_0, s_1, \dots, s_{\ell-1} \}$ and $\{ s_\ell = g(s_0), g(s_1), \dots, g(s_{\ell-1}) \}$ are disjoint. 
Set $\alpha = (s_0 s_1 \cdots s_{\ell-2}) (\alpha_{s_{\ell-1}})$ and $\beta = g(\alpha)$.
We have $\alpha = \lambda_0 \alpha_{s_0} + \lambda_1 \alpha_{s_1} + \cdots + \lambda_{\ell-2} \alpha_{s_{\ell-2}} + \alpha_{s_{\ell-1}}$ where $\lambda_i >0$ for all $i \in \{1, \dots, \ell-2\}$.
Thus $\beta = g(\alpha) = \lambda_0 \alpha_{g (s_0)} + \lambda_1 \alpha_{g (s_1)} + \cdots + \lambda_{\ell-2} \alpha_{g (s_{\ell-2})} + \alpha_{g (s_{\ell-1})}$.
In particular, $\beta \neq \alpha$. 
However, since $\Supp (\alpha) \cap \Supp (\beta) = \emptyset$, $s_{\ell-1} \in \Supp (\alpha)$, $s_\ell = g(s_0) \in \Supp (\beta)$, and $m_{s_{\ell-1}, s_\ell} \ge 3$, we have $\langle \alpha, \beta \rangle < 0$.
This contradicts Lemma \ref{lem3_14}.
\end{proof}

\begin{prop}\label{prop3_17}
Suppose that $(\Gamma, G)$ has the $\hat \Phi^+$-basis property.
Then the relation $\equiv$ has at least two equivalence classes.
\end{prop}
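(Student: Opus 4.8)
The plan is to argue by contradiction. Suppose that $\equiv$ has a single equivalence class; I will deduce that $G$ is trivial, against the standing hypothesis. The mechanism is Lemma \ref{lem3_15}: an element $g \in G$ that fixes a root $\alpha$ necessarily fixes every root $\beta$ with $\langle \alpha, \beta \rangle \not\in \{0,1,-1\}$, that is, every root $\beta$ with $\alpha \equiv_1 \beta$. Chaining this along a sequence of $\equiv_1$-steps will show that if $g$ fixes one root $\alpha_0$ and $\alpha_0 \equiv \beta$, then $g$ fixes $\beta$ as well.

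First I would pick any $g \in G$ with $g \neq 1$, which exists because $G$ is non-trivial. Since $(\Gamma, G)$ has the $\hat \Phi^+$-basis property, Lemma \ref{lem3_16} gives a vertex $s \in S$ with $g(s) = s$; then $g(\alpha_s) = \alpha_{g(s)} = \alpha_s$, so $g$ fixes the root $\alpha_0 = \alpha_s$. Now assume for contradiction that $\equiv$ has only one class. Then $\alpha_0 \equiv \beta$ for every $\beta \in \Phi$, so there is a finite sequence $\alpha_0 = \gamma_0, \gamma_1, \dots, \gamma_n = \beta$ in $\Phi$ with $\gamma_{i-1} \equiv_1 \gamma_i$ for $1 \le i \le n$. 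An induction on $i$, with base case $i = 0$ trivial and inductive step given verbatim by Lemma \ref{lem3_15} applied to the pair $(\gamma_{i-1}, \gamma_i)$ (whose inner product is not in $\{0,1,-1\}$), yields $g(\gamma_i) = \gamma_i$ for all $i$, hence $g(\beta) = \beta$. Thus $g$ fixes every root; in particular $g(\alpha_t) = \alpha_t$, i.e. $g(t) = t$, for all $t \in S$, so $g = 1$ --- a contradiction. Therefore $\equiv$ has at least two equivalence classes.

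I do not expect a serious obstacle here: the statement follows by directly combining Lemmas \ref{lem3_15} and \ref{lem3_16}, both already available. The only point requiring a little care is that $\equiv_1$ is merely reflexive and symmetric, not transitive, so one must genuinely propagate fixedness one $\equiv_1$-step at a time along the chain $\gamma_0, \dots, \gamma_n$ rather than invoking Lemma \ref{lem3_15} in a single stroke.
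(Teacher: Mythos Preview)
Your proof is correct and follows essentially the same route as the paper: argue by contradiction, use Lemma~\ref{lem3_16} to get a fixed simple root, then propagate fixedness via Lemma~\ref{lem3_15} to all of $\Phi$ and conclude $g=\id$. If anything, you are more careful than the paper, which writes ``$\alpha_s\equiv\beta$, so by Lemma~\ref{lem3_15} $g(\beta)=\beta$'' without spelling out the induction along a chain of $\equiv_1$-steps that you make explicit.
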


\begin{proof}
Suppose instead that $\equiv$ has a unique equivalence class.
Let $g \in G$.
Set $U = \{ x \in V \mid g(x)=x \}$.
By Lemma \ref{lem3_16} there exists $s \in S$ such that $g(s) = s$.
Then $g (\alpha_s) = \alpha_s$.
Let $\beta \in \Phi$.
By assumption we have $\alpha_s \equiv \beta$.
It follows from Lemma \ref{lem3_15} that $g (\beta) = \beta$.
Thus, $U$ contains $\Phi$, hence $U = V$.
So, $G = \{ \id \}$ which is a contradiction since we are under the assumption $G \neq \{ \id \}$. 
So, $\equiv$ has at least two equivalence classes.
\end{proof}

\begin{proof}[Proof of Proposition \ref{prop2_11}]
Suppose that $(\Gamma, G)$ has the $\hat \Phi^+$-basis property. 
By Proposition \ref{prop3_17} the relation $\equiv$ has at least two equivalence classes, hence, by Corollary \ref{corl3_13}, $\Gamma$ is one of the following Coxeter graphs: $A_m$ ($m \ge 1$), $D_m$ ($m \ge 4$), $E_m$ ($6 \le m \le 8$), $\tilde A_m$ ($m \ge 2$), $\tilde D_m$ ($m \ge 4$), $\tilde E_m$ ($6 \le m \le 8$), $A_\infty$, ${}_\infty A_\infty$, $D_\infty$.
The Coxeter graphs $A_1$, $E_7$, $E_8$, $\tilde E_8$ and $A_\infty$ have no nontrivial symmetry, hence $\Gamma$ is not one of these graphs.
The Coxeter graph $A_{2m}$ has no nontrivial symmetry fixing an element of $S$ and we know by Lemma \ref{lem3_16} that each element of $G$ fixes at least one element of $S$, hence neither $\Gamma$ is $A_{2m}$.
If $g$ is a nontrivial symmetry of the Coxeter graph $\tilde A_{2m}$ that fixes an element of $S$, then there exist $s,t \in S$ such that $m_{s,t}=3$ and $g(s) = t$, hence, by Lemma \ref{lem3_2}, such an element cannot lie in $G$.
Thus, $\Gamma$ is also different from $\tilde A_{2m}$.
So, $\Gamma$ is one of the following Coxeter graphs: $A_{2m+1}$ ($m \ge 1$), $D_m$ ($m \ge 4$), $E_6$, $\tilde A_{2m+1}$ ($m \ge 1$), $\tilde D_m$ ($m \ge 4$), $\tilde E_6$, $\tilde E_7$, ${}_\infty A_\infty$, $D_\infty$.
\end{proof}


\subsection{Proof of Theorem \ref{thm2_12}}\label{subsec3_4}

We assume again that $\Gamma$ is connected, that $G$ is non-trivial, and that the orbits of $S$ under the action of $G$ are finite.
We also assume that $\Phi$ is the canonical root system of $\Gamma$ and that $\langle .,. \rangle : V \times V \to \R$ is its canonical bilinear form.

\begin{lem}\label{lem3_18}
Assume that $(\Gamma, G)$ has the $\hat \Phi^+$-basis property.
Then $(\Gamma, G)$ is up to isomorphism one of the pairs given in the statement of Theorem \ref{thm2_12}.
\end{lem}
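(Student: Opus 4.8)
The plan is to run through the graphs left open by Proposition \ref{prop2_11}, determine their symmetry groups, and eliminate every symmetry group that does not occur in Theorem \ref{thm2_12}. By Proposition \ref{prop2_11}, $\Gamma$ is one of $A_{2m+1}$ ($m\ge1$), $D_m$ ($m\ge4$), $E_6$, $\tilde A_{2m+1}$ ($m\ge1$), $\tilde D_m$ ($m\ge4$), $\tilde E_6$, $\tilde E_7$, ${}_\infty A_\infty$, $D_\infty$. For each such $\Gamma$ I would first write down $\Sym(\Gamma)$ and enumerate its non-trivial subgroups $G$ up to conjugacy (conjugate subgroups give isomorphic pairs, which is the meaning of ``up to isomorphism'' here): $\Sym(\Gamma)$ is $\Z/2\Z$ for $A_{2m+1}$, $D_m$ with $m\ge5$, $E_6$, $D_\infty$, $\tilde E_7$; it is $\mathfrak S_3$ for $D_4$ and $\tilde E_6$, $\mathfrak S_4$ for $\tilde D_4$, the order-$8$ group generated by the two fork transpositions and the central-path reflection for $\tilde D_m$ with $m\ge5$, the dihedral group of the $(2m{+}2)$-cycle for $\tilde A_{2m+1}$, and the infinite dihedral group for ${}_\infty A_\infty$.

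Most of the work is then done by two necessary conditions already at our disposal. By Lemma \ref{lem3_16} every $g\in G$ fixes a vertex, and by Lemmas \ref{lem3_2} and \ref{lem3_4}, if $g(s)\ne s$ then $m_{s,g(s)}=2$ and every $G$-orbit in $S$ spans a graph of isolated vertices. Lemma \ref{lem3_16} kills all non-trivial rotations and all vertex-free reflections, which immediately pins $G$ down as the group in cases (v) and (ix) for $\tilde A_{2m+1}$ and ${}_\infty A_\infty$; together with Lemma \ref{lem3_2} (which removes the central-path reflection of $\tilde D_m$ when $m$ is odd, since it swaps two adjacent vertices) it forces cases (i), (ii), (iv), (vi) for $m$ odd, and (x). For $D_4$ the surviving subgroups of $\mathfrak S_3$ are a transposition — isomorphic to case (ii) with $m=4$ — and the two subgroups containing the triality $\Z/3\Z$, which are precisely cases (iii); nothing needs to be excluded.

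The remaining configurations survive these conditions but are absent from Theorem \ref{thm2_12}: the diagonal fork transposition and the Klein four-group of fork transpositions in $\tilde D_m$; in $\tilde D_4$ the groups not conjugate to an $\mathfrak S_3$ fixing a leaf; in $\tilde E_6$ the subgroups containing the $\Z/3\Z$ of leg rotations; and $\tilde E_7$ and $\tilde D_{2k}$ ($k\ge3$) carrying the reflection exchanging their two ends. Each must be shown to fail the $\hat\Phi^+$-basis property, and by Proposition \ref{prop3_3} it is enough to exhibit one root $\alpha\in\Phi$ not of the form $w(\alpha_s)$, $w\in W^G$, $s\in S$. Two devices dispose of most of them. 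First, Lemma \ref{lem3_14} says that if the property held, any $G$-orbit of roots would be pairwise orthogonal; for the diagonal fork transposition on $\tilde D_m$, for the non-conjugate Klein four-groups on $\tilde D_4$, and for the $\Z/3\Z$ on $\tilde E_6$, a root supported on a path joining the two ``ends'' (resp. two of the three legs) is carried by $g$ to a root with which it pairs to $-2$, a contradiction. Second, since each $X\in\SS$ spans a graph of isolated vertices, every set $w(\omega_X)$ has exactly $|X|$ elements, so $F(\hat\Phi^+)$ contains only orbits whose cardinality is an orbit-size of $S$; hence a $G$-orbit on $\Phi^+$ of any other size — for instance the $6$-element orbit of $\alpha_s+\alpha_r+\alpha_t$ for a suitable path $s,r,t$ when $G$ is multiply transitive on the leaves of $\tilde D_4$, or a $6$-element orbit for $\mathfrak S_3$ on $\tilde E_6$ — excludes the pair at once.

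The genuine obstacle is the pair $\tilde E_7$ with its involution and, in the same spirit, $\tilde D_{2k}$ ($k\ge3$) with the end-reversing reflection and its twists by fork transpositions: here Lemmas \ref{lem3_2}, \ref{lem3_4}, \ref{lem3_14} and \ref{lem3_16} all hold and the orbit-size count is inconclusive, yet the property must still fail. For these I would use that $\Gamma$ is affine. Writing the canonical root system as $\Phi=\{\beta+n\delta:\beta\in\Phi_{\fin},\,n\in\Z\}$ with $\delta$ spanning the radical of $\langle\cdot,\cdot\rangle$, one computes $\hat\Gamma$ and the equivariant root basis $\hat\B$ (Theorem \ref{thm2_6}) — $\hat\Gamma$ being $\tilde F_4$ for $\tilde E_7$ and $\tilde B_k$ for $\tilde D_{2k}$ — and then either locates a $G$-orbit $\omega$ on $\Phi^+$ whose sum $\sum_{\alpha\in\omega}\alpha$, while of the correct norm, is not a root of $\hat\B$, or, projecting the identity $W^G\cdot\Pi=\Phi$ modulo $\R\delta$, shows that the $W^G$-orbit of the projected simple roots fails to cover $\Phi_{\fin}$. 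Once this is carried out, and once one has checked that all surviving configurations really are isomorphic to cases (i)--(x), the proof is complete; the affine analysis of $\tilde E_7$ — the one graph permitted by Proposition \ref{prop2_11} but missing from Theorem \ref{thm2_12} — is where essentially all the difficulty lies.
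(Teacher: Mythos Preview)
Your overall strategy coincides with the paper's: invoke Proposition \ref{prop2_11}, use Lemma \ref{lem3_16} and Lemma \ref{lem3_2}/\ref{lem3_4} to narrow the list of admissible subgroups, and then kill the survivors with Lemma \ref{lem3_14}. The enumeration of symmetry groups and the reduction for $\tilde A_{2m+1}$, ${}_\infty A_\infty$, and the easy cases is fine and matches the paper.

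The gap is that you misjudge where the difficulty lies. You assert that for $\tilde E_7$ with its involution, and for $\tilde D_{2k}$ ($k\ge 3$) with the end-reversing reflection, Lemma \ref{lem3_14} is inconclusive, and you propose an unfinished affine argument via $\hat\B$ and projection modulo $\R\delta$. This is mistaken: Lemma \ref{lem3_14} disposes of these cases exactly as it does the others. For $\tilde D_{2m}$ take
\[
\alpha = \alpha_0 + \alpha_1 + 2\alpha_2 + \cdots + 2\alpha_{m-1} + \alpha_m\,,
\]
and for $\tilde E_7$ take
\[
\alpha = \alpha_1 + \alpha_2 + \alpha_3 + 2\alpha_4 + 2\alpha_5 + \alpha_6 + \alpha_7\,;
\]
in each case $g(\alpha)\neq\alpha$ and $\langle\alpha,g(\alpha)\rangle\neq 0$. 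The paper handles all five residual families (the double fork swap on $\tilde D_m$, the path reflection on $\tilde D_{2m}$, the $4$-cycle on $\tilde D_4$, the $3$-cycle on $\tilde E_6$, and the involution on $\tilde E_7$) uniformly by exhibiting one such root in each, so your detour through the equivariant root basis is unnecessary and, as you acknowledge, not carried out.

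A smaller remark: your orbit-size device is valid but also superfluous. The paper eliminates the $4$-cycle on $\tilde D_4$ with $\alpha=\alpha_0+\alpha_2+\alpha_3$ and the $3$-cycle on $\tilde E_6$ with $\alpha=\alpha_0+\alpha_2+\alpha_4+\alpha_5$, again via Lemma \ref{lem3_14}. The lesson is that after the reductions of Proposition \ref{prop2_11} and Lemma \ref{lem3_16}, a single well-chosen root suffices in every remaining case; the affine machinery you outline is never needed.
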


\begin{proof}
By Proposition \ref{prop2_11} we know that $\Gamma$ is one of the following Coxeter graphs: $A_{2m+1}$ ($m \ge 1$), $D_m$ ($m \ge 4$), $E_6$, $\tilde A_{2m+1}$ ($m \ge 1$), $\tilde D_m$ ($m \ge 4$), $\tilde E_6$, $\tilde E_7$, ${}_\infty A_\infty$, $D_\infty$.
By Lemma \ref{lem3_16} we also know that for each $g \in G$ there exists $s \in S$ such that $g(s) = s$.
If $\Gamma = \tilde A_{2m+1}$ and $g_1, g_2$ are two distinct symmetries of $\Gamma$ conjugated to the symmetry described in Case (v) of Theorem \ref{thm2_12}, then $g = g_1 g_2$ does not fix any element of $S$.
By Lemma \ref{lem3_16} such an element cannot lie in $G$.
Similarly, if $g_1, g_2$ are two distinct symmetries of ${}_\infty A_\infty$ conjugated to the symmetry described in Case (ix) of Theorem \ref{thm2_12}, then $g = g_1 g_2$ does not fix any element of $S$, hence such an element cannot lie in $G$.
Observe that, if $H$ is a subgroup of the symmetric group $\SSS_4$ which contains no cycle of length $4$ and no product of two disjoint transpositions, then $H$ fixes an element of $\{1,2,3,4\}$.
Using this observation it is easily seen that, when $\Gamma = \tilde D_4$, we are either in Case (vi) or Case (vii) of Theorem \ref{thm2_12}, or in the following Case (i) or Case (iii). 
So, it remains to prove that $(\Gamma, G)$ is not one of the following pairs (see Figure \ref{fig3_1}).
\begin{itemize}
\item[(i)]
$\Gamma = \tilde D_m$ ($m \ge 4$) and $G$ contains an element $g$ of order $2$ such that
\end{itemize}
\[
g(s_0) = s_1,\ g(s_1) = s_0,\ g(s_i) = s_i\ (2 \le i \le m-2),\ g(s_{m-1}) = s_m,\ g(s_m) = s_{m-1}\,.
\]
\begin{itemize}
\item[(ii)]
$\Gamma = \tilde D_{2m}$ ($m \ge 3$) and $G$ contains an element $g$ of order $2$ such that 
\[
g(s_i) = s_{2m-i}\ (0 \le i \le 2m)\,.
\]
\item[(iii)]
$\Gamma = \tilde D_4$ and $G$ contains an element $g$ of order $4$ such that 
\[
g(s_0) = s_4,\ g(s_1) = s_0,\ g(s_2)= s_2,\ g(s_3) = s_1,\ g(s_4) = s_3\,.
\]
\item[(iv)]
$\Gamma = \tilde E_6$ and $G$ contains an element $g$ of order $3$ such that
\end{itemize}
\[
g(s_0) = s_6,\ g(s_1) = s_0,\ g(s_2) = s_5,\ g(s_3) = s_2,\ g(s_4) = s_4,\ g(s_5) = s_3,\ g(s_6) = s_1\,.
\]
\begin{itemize}
\item[(v)]
$\Gamma = \tilde E_7$ and $G$ contains an element $g$ of order $2$ such that 
\begin{gather*}
g(s_0) = s_7,\ g(s_1) = s_6,\ g(s_2) = s_2,\ g(s_3) = s_5,\\
g(s_4) = s_4,\ g(s_5) = s_3,\ g(s_6) = s_1,\ g(s_7) = s_0\,.
\end{gather*}
\end{itemize}

\begin{figure}[ht!]
\begin{center}
\begin{tabular}{cc}
\parbox[c]{5.2cm}{\includegraphics[width=5cm]{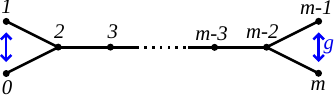}}&
\parbox[c]{5.6cm}{\includegraphics[width=5.4cm]{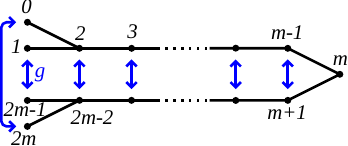}}\\
\noalign{\smallskip}
(i)&(ii)
\end{tabular}

\bigskip
\begin{tabular}{ccc}
\parbox[c]{2cm}{\includegraphics[width=1.8cm]{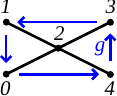}}&
\parbox[c]{3.6cm}{\includegraphics[width=3.4cm]{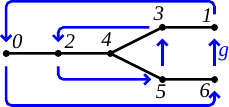}}&
\parbox[c]{3.5cm}{\includegraphics[width=3.3cm]{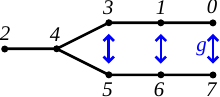}}\\
\noalign{\smallskip}
(iii)&(iv)&(v)
\end{tabular}

\caption{Pairs that do not have the $\hat \Phi^+$-basis property}\label{fig3_1}
\end{center}
\end{figure}

In each of the five cases we show a root $\alpha \in \Phi$ such that $g(\alpha) \neq \alpha$ and $\langle \alpha , g(\alpha) \rangle \neq 0$.
By Lemma \ref{lem3_14} this implies that $(\Gamma, G)$ does not have the $\hat \Phi^+$-basis property.
\begin{itemize}
\item[(i)]
We take $\alpha = \alpha_1 + \alpha_2 + \cdots + \alpha_{m-2} + \alpha_{m-1}$.
Then $g (\alpha) = \alpha_0 + \alpha_2 + \cdots + \alpha_{m-2} + \alpha_m \neq \alpha$ and $\langle \alpha, g(\alpha) \rangle = -2 \neq 0$.
\item[(ii)]
We take $\alpha = \alpha_0 + \alpha_1 + 2 \alpha_2 + \cdots + 2 \alpha_{m-1} + \alpha_m$.
Then $g (\alpha) = \alpha_m + 2 \alpha_{m+1} + \cdots + 2 \alpha_{2m -2} + \alpha_{2m-1} + \alpha_{2m} \neq \alpha$ and $\langle \alpha, g(\alpha) \rangle = -2 \neq 0$.
\item[(iii)]
We take $\alpha = \alpha_0 + \alpha_2 + \alpha_3$.
Then $g (\alpha) = \alpha_1 + \alpha_2 + \alpha_4 \neq \alpha$ and $\langle \alpha, g (\alpha) \rangle = -2 \neq 0$.
\item[(iv)]
We take $\alpha = \alpha_0 + \alpha_2 + \alpha_4 + \alpha_5$.
Then $g (\alpha) = \alpha_3 + \alpha_4 + \alpha_5 + \alpha_6 \neq \alpha$ and $\langle \alpha, g(\alpha) \rangle = -1 \neq 0$.
\item[(v)]
We take $\alpha = \alpha_1 + \alpha_2 + \alpha_3 + 2 \alpha_4 + 2 \alpha_5 + \alpha_6 + \alpha_7$.
Then $g (\alpha) = \alpha_0 + \alpha_1 + \alpha_2 + 2 \alpha_3 + 2 \alpha_4 + \alpha_5 + \alpha_6 + \alpha_7 \neq \alpha$ and $\langle \alpha, g (\alpha) \rangle = -2 \neq 0$.
\end{itemize}
\end{proof}

It remains to show that the pairs of Theorem \ref{thm2_12} have the $\hat \Phi^+$-basis property. 
We start with the pairs with a Coxeter graph of spherical type, that is, the first four cases. 
For $\alpha \in \Phi$ we denote by $W^G \alpha$ the orbit of $\alpha$ under the action of $W^G$.

\begin{lem}\label{lem3_19}
Suppose that $(\Gamma, G)$ is one of the pairs of Theorem \ref{thm2_12}.
Let $\alpha \in \Phi$.
\begin{itemize}
\item[(1)]
We have $W^G (-\alpha) = -(W^G\alpha)$.
\item[(2)]
For each $g \in G$ we have $g (W^G\alpha) = W^G g(\alpha)$.
\item[(3)]
For each $s \in S$ the orbit $W^G\alpha_s$ contains $-\alpha_s$ so that $W^G\alpha_s = W^G(-\alpha_s)$.
\end{itemize}
\end{lem}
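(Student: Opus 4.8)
The plan is to treat the three assertions separately, noting at the outset that (1) and (2) are purely formal and use nothing about the specific list of Theorem~\ref{thm2_12}, so that all the content sits in (3).

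For (1) I would simply invoke linearity of the representation $f\colon W\to\GL(V)$: every $w\in W^G$ satisfies $w(-\alpha)=-w(\alpha)$, whence $W^G(-\alpha)=\{\,w(-\alpha)\mid w\in W^G\,\}=-\{\,w(\alpha)\mid w\in W^G\,\}=-(W^G\alpha)$. For (2) I would combine the equivariance $f(g(w))=g\,f(w)\,g^{-1}$ of $f$ with the fact that $W^G$ is the subgroup of $W$ fixed by $G$: for $g\in G$ and $w\in W^G$ one has $g\,f(w)\,g^{-1}=f(g(w))=f(w)$ since $g(w)=w$, i.e.\ $g(w(\alpha))=w(g(\alpha))$ for all $\alpha\in V$; letting $w$ range over $W^G$ gives $g(W^G\alpha)=W^Gg(\alpha)$.

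For (3) the step I would isolate first — and the one I expect to be the only real work — is the following observation, established by a direct inspection of the ten pairs of Theorem~\ref{thm2_12} (see Figures~\ref{fig2_2}--\ref{fig2_4}): any two vertices of $\Gamma$ lying in the same $G$-orbit are non-adjacent, i.e.\ satisfy $m_{s,t}=2$. Concretely, in each of the listed pairs the non-singleton $G$-orbits are exactly the two- or three-element sets of leaves or fork-ends of the graph, and these are pairwise non-adjacent. Granting this, fix $s\in S$ and let $X$ be its $G$-orbit; $X$ is finite by our standing hypothesis, and its vertices are pairwise non-adjacent, so $\Gamma_X$ is a finite union of isolated vertices, $W_X$ is finite, $X\in\SS$, and $u_X=\prod_{t\in X}t$. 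The involutions $t$, $t\in X$, pairwise commute, $t(\alpha_t)=-\alpha_t$, and $t(\alpha_{t'})=\alpha_{t'}$ for $t'\in X\setminus\{t\}$ (since $\langle\alpha_t,\alpha_{t'}\rangle=-2\cos(\pi/2)=0$), so $u_X(\alpha_s)=-\alpha_s$. As $u_X\in\SS_W\subseteq W^G$ by Theorem~\ref{thm2_5}, we get $-\alpha_s=u_X(\alpha_s)\in W^G\alpha_s$; finally $W^G\alpha_s$ and $W^G(-\alpha_s)$ are $W^G$-orbits containing the common element $-\alpha_s$, so they coincide.

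I do not anticipate any obstacle beyond the bookkeeping of the inspection in (3). It is worth recording why the hypothesis on $(\Gamma,G)$ is essential there: for $\Gamma=A_2$ with $G$ generated by the non-trivial symmetry one has $W^G\alpha_{s_1}=\{\alpha_{s_1},-\alpha_{s_2}\}$, which does not contain $-\alpha_{s_1}$, so (3) genuinely fails outside the classified list.
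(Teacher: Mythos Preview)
Your proposal is correct and follows essentially the same route as the paper: Parts (1) and (2) are dispatched by linearity and equivariance exactly as in the original, and Part (3) proceeds by the same case-by-case observation that each $G$-orbit $X$ spans a totally disconnected $\Gamma_X$, so that $u_X=\prod_{t\in X}t$ sends $\alpha_s$ to $-\alpha_s$. One minor quibble: your description of the non-singleton orbits as ``leaves or fork-ends'' is not literally accurate in Cases (i), (v), and (ix), where the orbits $\{i,2m+2-i\}$ (resp.\ $\{i,-i\}$) consist mostly of interior vertices of the path; but the claim you actually use---that vertices in the same orbit are pairwise non-adjacent---is correct in all ten cases, so the argument goes through.
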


\begin{proof}
Let $\alpha \in \Phi$ and $w \in W^G$.
Then $w (-\alpha) = - w(\alpha)$ and $g(w(\alpha)) = w (g (\alpha))$ for all $g \in G$.
This shows Part (1) and Part (2). 
Let $s \in S$ and let $X$ be the orbit of $s$ under the action of $G$.
Recall that $(\Gamma, G)$ is one of the pairs of Theorem \ref{thm2_12}.
Observe that, in each case, the Coxeter graph $\Gamma_X$ is a union of isolated vertices. 
Thus, $u_X = \prod_{t \in X} t$ and $u_X(\alpha_t) = -\alpha_t$ for all $t \in X$.
So, $u_X \in W ^G$ and $u_X (\alpha_s) = -\alpha_s$, hence $W^G \alpha_s = W^G (-\alpha_s)$.
\end{proof}

\begin{lem}\label{lem3_20}
If $(\Gamma, G)$ is one of the pairs given in Case (i), Case (ii), Case (iii) and Case (iv) of Theorem \ref{thm2_12}, then $(\Gamma, G)$ has the $\hat \Phi^+$-basis property.
\end{lem}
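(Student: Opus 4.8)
The plan is to turn the statement into an elementary counting problem. Recall from Section~\ref{newsec3} that the map $F\colon\hat\Phi^+\to\Omega_\fin$ is always injective and that $(\Gamma,G)$ has the $\hat\Phi^+$-basis property if and only if $F$ is bijective, equivalently surjective; equivalently, by Proposition~\ref{prop3_3}, every root of $\Phi$ is $W^G$-conjugate to a simple root. In each of Cases (i)--(iv) the graph $\Gamma$ is of spherical type, so $W$ is finite and $\Phi$, $\Phi^+$ and the set $\Omega_\fin=\Omega$ of $G$-orbits in $\Phi^+$ are all finite; moreover $\hat\Gamma$ is again of spherical type, being $B_{m+1}$, $B_{m-1}$, $G_2$ and $F_4$ respectively (see Crisp~\cite{Crisp1,Crisp2} or Geneste--Paris~\cite{GenPar1}), so $\hat\Phi^+$ is finite too. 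Hence it suffices to prove the numerical identity $|\hat\Phi^+|=|\Omega|$ in each of the four cases.

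For the left-hand side one reads off from Bourbaki~\cite{Bourb1} that $|\hat\Phi^+|$ equals $(m+1)^2$, $(m-1)^2$, $6$ and $24$ in Cases (i), (ii), (iii), (iv). For the right-hand side I would count $G$-orbits on $\Phi^+$ directly. A symmetry of $\Gamma$ permutes $\Pi$, hence preserves $\Phi^+$, so $G$ acts on $\Phi^+$ and each orbit has size dividing $|G|$; when $|G|$ is prime (Cases (i), (ii), (iv), where $|G|=2$, and Case~(iii) with $G=\langle g_1\rangle$ of order $3$; the case $G=\langle g_1,g_2\rangle$ of order $6$ will be handled separately) one gets $|\Omega|=f+(|\Phi^+|-f)/|G|$, where $f$ is the number of $G$-fixed positive roots.

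Now I would carry out the four counts. In Case~(i), writing the positive roots of $A_{2m+1}$ as the intervals $\alpha_i+\alpha_{i+1}+\dots+\alpha_j$ with $1\le i\le j\le 2m+1$, the symmetry $g$ fixes exactly the $m+1$ ones with $i+j=2m+2$, so $|\Omega|=(m+1)+((m+1)(2m+1)-(m+1))/2=(m+1)^2$. In Case~(ii), realising the roots of $D_m$ as $\pm\varepsilon_i\pm\varepsilon_j$ with $g$ acting by the sign change $\varepsilon_m\mapsto-\varepsilon_m$, the $g$-fixed positive roots are the $(m-1)(m-2)$ ones not involving $\varepsilon_m$, so $|\Omega|=(m-1)(m-2)+(m-1)=(m-1)^2$. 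In Case~(iii) with $G=\langle g_1\rangle$, inspecting the $12$ positive roots of $D_4$ shows that $g_1$ fixes exactly $\alpha_2$, $\alpha_1+\alpha_2+\alpha_3+\alpha_4$ and $\alpha_1+2\alpha_2+\alpha_3+\alpha_4$ and splits the remaining nine roots into three orbits of size $3$, so $|\Omega|=3+3=6$; for $G=\langle g_1,g_2\rangle$ the very same six subsets are the orbits, since $\langle g_1,g_2\rangle$ stabilises each of them and acts transitively on each of the three triples, so again $|\Omega|=6$. In Case~(iv) I would list the $36$ positive roots of $E_6$ by height and pair them off under the order-$2$ diagram automorphism $g$; one finds that exactly $12$ of them are $g$-fixed, whence $|\Omega|=12+12=24$.

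Since $|\hat\Phi^+|=|\Omega|$ in every case, $F$ is a bijection and $(\Gamma,G)$ has the $\hat\Phi^+$-basis property. The one genuinely tedious point is the fixed-root count for $E_6$; it can be shortened by invoking the classical description of the folding of $E_6$ onto $F_4$ (the $g$-fixed positive roots of $E_6$ correspond to the $12$ short positive roots of $F_4$ and the $12$ two-element $g$-orbits to the $12$ long positive roots), but a direct tally of $g$-fixed positive roots is short enough to keep the argument self-contained.
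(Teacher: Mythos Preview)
Your counting argument is correct and genuinely different from the paper's proof. The paper proceeds constructively: using Proposition~\ref{prop3_3}, it shows in each case that every positive root lies in some $W^G$-orbit $W^G\alpha_s$ by exhibiting explicit words in the generators $u_X$ of $W^G$ that carry simple roots to all the positive roots. For instance, in Case~(i) it shows directly that $\Phi = W^G\alpha_m \cup W^G\alpha_{m+1}$ by writing down the relevant products of $u_{X_k}$ and $s_{m+1}$. Your approach instead exploits finiteness: since $F$ is already known to be injective and both $\hat\Phi^+$ and $\Omega$ are finite in the spherical cases, the equality $|\hat\Phi^+|=|\Omega|$ forces bijectivity, and this equality follows from the known types of $\hat\Gamma$ together with an orbit count on $\Phi^+$.

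Your route is shorter and avoids any manipulation inside $W^G$; it trades explicit Coxeter-group computations for the external input that $\hat\Gamma$ is $B_{m+1}$, $B_{m-1}$, $G_2$, $F_4$ (which the paper cites but does not itself prove). The paper's route, while longer, is more self-contained in that it never needs the type of $\hat\Gamma$, and—more importantly for the architecture of the paper—its explicit orbit decompositions (e.g.\ $\Phi_1 = W_1^{G_1}\alpha_m \cup W_1^{G_1}\alpha_{m+1}$) are reused verbatim in the proof of Lemma~\ref{lem3_22} for the affine cases, where your finiteness argument would not apply. So if you adopt your proof here, you would need to supply those decompositions separately when you reach the affine cases.
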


\begin{proof}
Note that $\Gamma$ is of spherical type in all the four cases. 
We number again the vertices of $\Gamma$ as in Bourbaki \cite[Planches]{Bourb1} (see also Figure \ref{fig2_2}) and we use the description of $\Phi^+$ given in that reference. 
By Proposition \ref{prop3_3} it suffices to show that there exists a subset $X \subset S$ such that $\Phi = \cup_{s \in X} W^G \alpha_s$.
We argue case by case.

{\it Case (i):}
We show that $\Phi = W^G \alpha_m \cup W^G \alpha_{m+1}$.
The orbits of $S$ under the action of $G$ are $X_1 = \{ 1, 2m+1 \}, X_2 = \{2, 2m \}, \dots, X_m = \{m, m+2\}$ and $\{m+1\}$.
Thus, by Theorem \ref{thm2_5}, $W^G = \langle u_{X_1}, \dots, u_{X_m}, s_{m+1} \rangle$.
By Lemma \ref{lem3_19} it suffices to show that $W^G \alpha_m \cup W^G \alpha_{m+1}$ contains $\Phi^+$.
First, we show that the orbit $W^G \alpha_{m+1}$ contains all the roots $\alpha \in \Phi^+$ such that $g(\alpha) = \alpha$.
Indeed, such a root either is equal to $\alpha_{m+1}$, or is of the form $\alpha = \sum_{i=k}^{2m+2-k} \alpha_i$ with $1 \le k \le m$.
In the second case we have $\alpha = (u_{X_k} \cdots u_{X_m})(\alpha_{m+1})$.
Now, we show that $W^G \alpha_m$ contains all the roots $\alpha \in \Phi^+$ such that $g(\alpha) \neq \alpha$, that is, the set of roots $\alpha = \sum_{i=1}^{2m+1} \lambda_i \alpha_i \in \Phi^+$ such that $\sum_{i=1}^m \lambda_i \neq \sum_{i=m+2}^{2m+1} \lambda_i$.
For $2 \le k \le m$ we have $\alpha_{k-1} = (u_{X_k} u_{X_{k-1}})(\alpha_k)$.
Hence $W^G \alpha_m$ contains the set $\{ \alpha_k \mid 1 \le k \le m \}$.
Now, let $1 \le k < \ell \le m$.
Then $\alpha_m + \alpha_{m+1} = s_{m+1} (\alpha_m) \in W^G \alpha_m$,
\[
\begin{array}{rcl}
\sum_{i=k}^\ell \alpha_i & = & (u_{X_\ell} \cdots u_{X_{k+1}}) (\alpha_k) \in W^G \alpha_m\,,\\
\noalign{\smallskip}
\sum_{i=k}^{m+1} \alpha_i & = & (s_{m+1} u_{X_m} \cdots u_{X_{k+1}}) (\alpha_k) \in W^G \alpha_m\,,\\
\noalign{\smallskip}
\sum_{i=k}^{2m+2-\ell} \alpha_i & = & (u_{X_\ell} \cdots u_{X_m} s_{m+1} u_{X_m} \cdots u_{X_{k+1}}) (\alpha_k) \in W^G \alpha_m\,.
\end{array}
\]
So, the orbit $W^G \alpha_m$ contains the set $\Psi$ of roots $\alpha = \sum_{i=1}^{2m+1} \lambda_i \alpha_i \in \Phi^+$ such that $\sum_{i=1}^m \lambda_i > \sum_{i=m+2}^{2m+1} \lambda_i$.
On the other hand, the orbit $W^G \alpha_m$ also contains the root $(s_{m+1} u_{X_m} s_{m+1})(\alpha_m) = \alpha_{m+2}$.
Thus, this orbit contains $\alpha_m$ and $g(\alpha_m) = \alpha_{m+2}$.
By Lemma \ref{lem3_19}\,(2), $W^G \alpha_m$ is stable by $g$, hence it contains the set $g (\Psi)$ of roots $\alpha = \sum_{i=1}^{2m+1} \lambda_i \alpha_i \in \Phi^+$ such that $\sum_{i=1}^m \lambda_i < \sum_{i=m+2}^{2m+1} \lambda_i$.
This ends the proof of Case (i).

{\it Case (ii):}
We show that $\Phi = W^G \alpha_{m-2} \cup W^G \alpha_m$.
The orbits of $S$ under the action of $G$ are $\{1\}, \{2\}, \dots, \{m-2\}$ and $X=\{m-1, m\}$.
Thus, by Theorem \ref{thm2_5},  $W^G = \langle s_1, \dots, s_{m-2}, u_X \rangle$.
By Lemma \ref{lem3_19} it suffices to show that $W^G \alpha_{m-2} \cup W^G \alpha_m$ contains $\Phi^+$.
The orbit $W^G \alpha_m$ contains $\alpha_m$.
For $1 \le k \le m-2$ it contains $(s_k \cdots s_{m-2}) (\alpha_m) = \sum_{i=k}^{m-2} \alpha_i + \alpha_m$.
It also contains $(s_{m-2} u_X s_{m-2})(\alpha_m) = \alpha_{m-1}$ and, for each $1 \le k \le m-2$, the root $(s_k \cdots s_{m-2}) (\alpha_{m-1}) = \sum_{i=k}^{m-2} \alpha_i + \alpha_{m-1}$.
So, the orbit $W^G \alpha_m$ contains all the roots $\alpha \in \Phi^+$ such that $g (\alpha) \neq \alpha$.
Now, we show that $W^G \alpha_{m-2}$ contains all the roots $\alpha \in \Phi^+$ such that $g(\alpha) = \alpha$.
For $1 \le k \le m-3$ we have $\alpha_k = (s_{k+1} s_k)(\alpha_{k+1})$.
Thus, the orbit $W^G \alpha_{m-2}$ contains the set $\{ \alpha_k \mid 1 \le k \le m-2 \}$.
It follows that it contains for each $1 \le \ell < k \le m-2$ the root $\sum_{i=\ell}^k \alpha_i = (s_\ell \cdots s_{k-1}) (\alpha_k)$.
On the other hand, $W^G \alpha_{m-2}$ contains $\alpha_{m-2} + \alpha_{m-1} + \alpha_m = u_X (\alpha_{m-2})$.
So, it contains $\sum_{i=k}^m \alpha_i = (s_k \cdots s_{m-3}) (\alpha_{m-2} + \alpha_{m-1} + \alpha_m)$ for each $1 \le k \le m-3$.
It also contains for each $1 \le k < \ell \le m-2$ the root 
\[
\sum_{i=k}^{\ell-1} \alpha_i + \sum_{i=\ell}^{m-2} 2 \alpha_i + \alpha_{m-1} + \alpha_m = (s_\ell \cdots s_{m-2}) \left( \sum_{i=k}^m \alpha_i \right)\,.
\]
This completes the proof of Case (ii).

{\it Case (iii):}
The orbits of $S$ under the action of $G$ are $X = \{1, 2, 3 \}$ and $\{ 2 \}$ hence, by Theorem \ref{thm2_5},
$W^G = \langle u_X, s_2 \rangle$.
We show that $\Phi = \cup_{i=1}^4 W^G \alpha_i$.
By Lemma \ref{lem3_19} it suffices to show that $\cup_{i=1}^4 W^G \alpha_i$ contains the $12$ positive roots of $\Phi$.
The orbit $W^G \alpha_1$ contains $\alpha_1$, $s_2 (\alpha_1) = \alpha_1 + \alpha_2$ and $(u_X s_2)(\alpha_1) = \alpha_2 + \alpha_3 + \alpha_4$.
Similarly, the orbit $W^G \alpha_3$ contains $\alpha_3$, $\alpha_2 + \alpha_3$ and $\alpha_1 + \alpha_2 + \alpha_4$, and the orbit $W^G \alpha_4$ contains $\alpha_4$, $\alpha_2 + \alpha_4$ and $\alpha_1 + \alpha_2 + \alpha_3$.
Finally, the orbit $W^G \alpha_2$ contains the roots $\alpha_2$, $u_X (\alpha_2) = \alpha_1 + \alpha_2 + \alpha_3 + \alpha_4$ and $(s_2 u_X) (\alpha_2) = \alpha_1 + 2 \alpha_2 + \alpha_3 + \alpha_4$.

{\it Case (iv):}
We show that $\Phi = W^G \alpha_3 \cup W^G \alpha_4$.
The orbits of $S$ under the action of $G$ are $X = \{1, 6\}, Y = \{3, 5\}, \{4\}, \{2\}$.
Thus, by Theorem \ref{thm2_5}, $W^G = \langle u_X, u_Y, s_4, s_2 \rangle$.
By Lemma \ref{lem3_19} it suffices to show that $W^G \alpha_3 \cup W^G \alpha_4$ contains the $36$ positive roots of $\Phi$.
The orbit $W^G \alpha_4$ contains the $12$ roots $\alpha \in \Phi^+$ such that $g(\alpha) = \alpha$, namely:
\begin{gather*}
\gamma_1 = \alpha_4,\
\gamma_2 = s_2 (\alpha_4) = \alpha_2 + \alpha_4,\
\gamma_3 = s_4 (\gamma_2) = \alpha_2,\
\gamma_4 = u_Y (\alpha_4) = \alpha_3 + \alpha_4 + \alpha_5,\\
\gamma_5 = s_2 (\gamma_4) = \alpha_2 + \alpha_3 + \alpha_4 + \alpha_5,\
\gamma_6 = u_X (\gamma_4) = \alpha_1  + \alpha_3 + \alpha_4 + \alpha_5 + \alpha_6,\\
\gamma_7 = s_2 (\gamma_6) = \alpha_1 + \alpha_2 + \alpha_3 + \alpha_4 + \alpha_5 + \alpha_6,\
\gamma_8 = s_4 (\gamma_5) = \alpha_2 + \alpha_3 + 2 \alpha_4 + \alpha_5,\\
\gamma_9 = s_4 (\gamma_7) = \alpha_1 + \alpha_2 + \alpha_3 + 2 \alpha_4 + \alpha_5 + \alpha_6,\\
\gamma_{10} = u_Y (\gamma_9) = \alpha_1 + \alpha_2 + 2 \alpha_3 + 2 \alpha_4 + 2 \alpha_5 + \alpha_6,\\
\gamma_{11} = s_4 (\gamma_{10}) = \alpha_1 + \alpha_2 + 2 \alpha_3 + 3 \alpha_4 + 2 \alpha_5 + \alpha_6,\\
\gamma_{12} = s_2 (\gamma_{11}) = \alpha_1 + 2 \alpha_2 + 2 \alpha_3 + 3 \alpha_4 + 2 \alpha_5 + \alpha_6\,.
\end{gather*}
Now, we show that the orbit $W^G \alpha_3$ contains the $24$ roots $\alpha \in \Phi^+$ such that $g (\alpha) \neq \alpha$.
First, it contains the following $12$ roots, that are the positive roots $\alpha = \sum_{i=1}^6 \lambda_i \alpha_i$ such that $\lambda_1 + \lambda_3 > \lambda_5 + \lambda_6$.
\begin{gather*}
\delta_1 = \alpha_3,\
\delta_2 = u_X (\delta_1) = \alpha_1 + \alpha_3,\
\delta_3 = u_Y (\delta_2) = \alpha_1,\
\delta_4 = s_4 (\delta_1) = \alpha_3 + \alpha_4,\\
\delta_5 = s_4 (\delta_2) = \alpha_1 + \alpha_3 + \alpha_4,\
\delta_6 = s_2 (\delta_4) = \alpha_2 + \alpha_3 + \alpha_4,\\
\delta_7 = s_2 (\delta_5) = \alpha_1 + \alpha_2 + \alpha_3 + \alpha_4,\
\delta_8 = u_Y (\delta_5) = \alpha_1 + \alpha_3 + \alpha_4 + \alpha_5,\\
\delta_9 = s_2 (\delta_8) = \alpha_1 + \alpha_2 + \alpha_3 + \alpha_4 + \alpha_5,\
\delta_{10} = s_4 (\delta_9) = \alpha_1 + \alpha_2 + \alpha_3 + 2 \alpha_4 + \alpha_5,\\
\delta_{11} = u_Y (\delta_{10}) = \alpha_1 + \alpha_2 + 2 \alpha_3 + 2 \alpha_4 + \alpha_5,\\
\delta_{12} = u_X (\delta_{11}) = \alpha_1 + \alpha_2 + 2 \alpha_3 + 2 \alpha_4 + \alpha_5 + \alpha_6\,.
\end{gather*}
On the other hand, the orbit $W^G \alpha_3$ contains the root $u_Y (\delta_4) = \alpha_4 + \alpha_5 = g (\delta_4)$.
So, this orbit contains $\delta_4$ and $g(\delta_4)$.
By Lemma \ref{lem3_19}\,(2) it is stable by $g$, hence it contains the images by $g$ of the $12$ above enumerated roots, that is, the positive roots $\alpha = \sum_{i=1}^6 \lambda_i \alpha_i$ such that $\lambda_1 + \lambda_3 < \lambda_5 + \lambda_6$.
This completes the proof of Case (iv).
\end{proof}

We turn now to study the pairs $(\Gamma, G)$ of Theorem \ref{thm2_12} where $\Gamma$ is of affine type, that is, the pairs of Case (v), Case (vi), Case (vii) and Case (viii).

\begin{lem}\label{lem3_21}
\begin{itemize}
\item[(1)]
Let $x,y \in V$ and $w \in \GL (V)$ such that $w (x) = x+y$ and $w(y) = y$.
Then $w^k (x) = x + ky$ for all $k \in \Z$.
\item[(2)]
Let $x,x',y \in V$ and $w,w' \in \GL(V)$ such that $w(x) = x' + y$, $w' (x') = x + y$ and $w (y) = w' (y) = y$.
Then $(w'w)^k (x) = x + 2k y$, $w (w'w)^k (x) = x' + (2k+1)y$, $(ww')^k (x') = x' + 2ky$, and $w' (ww')^k (x') = x + (2k+1)y$, for all $k \in \Z$.
\end{itemize}
\end{lem}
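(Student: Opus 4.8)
The plan is to reduce everything to one elementary induction. For part~(1) I would first handle the case $k \ge 0$ by induction on $k$: the case $k = 0$ is trivial, and if $w^k(x) = x + ky$ then, using linearity of $w$ together with $w(y) = y$, we get $w^{k+1}(x) = w\bigl(w^k(x)\bigr) = w(x) + k\,w(y) = (x+y) + ky = x + (k+1)y$. For negative exponents the key observation is that $w^{-1}(y) = y$ as well, so applying $w^{-1}$ to the relation $w(x) = x + y$ gives $w^{-1}(x) = x - y$. Thus $w^{-1}$ satisfies exactly the hypotheses of part~(1) with $y$ replaced by $-y$, and the already-established nonnegative case applied to $w^{-1}$ yields $w^{-k}(x) = x - ky$ for all $k \ge 0$, which is precisely the asserted formula for negative exponents.

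For part~(2) I would simply repackage $w'w$ (resp.\ $ww'$) so that part~(1) applies directly. Indeed $(w'w)(x) = w'\bigl(w(x)\bigr) = w'(x' + y) = w'(x') + w'(y) = (x+y) + y = x + 2y$ and $(w'w)(y) = w'\bigl(w(y)\bigr) = w'(y) = y$, so part~(1) applied to the operator $w'w$, the vector $x$, and the vector $2y$ gives $(w'w)^k(x) = x + 2ky$ for all $k \in \Z$. Applying $w$ once more and using $w(y) = y$ gives $w(w'w)^k(x) = w(x) + 2k\,w(y) = (x' + y) + 2ky = x' + (2k+1)y$. The two remaining identities follow by running the same argument with the roles of $(w,x)$ and $(w',x')$ interchanged.

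There is essentially no obstacle here; the only point requiring a moment of care is the treatment of negative exponents in part~(1), where one must first verify that $w^{-1}$ fixes $y$ and solve for $w^{-1}(x)$ before invoking the nonnegative case. Everything else is a routine computation using only linearity of the elements of $\GL(V)$ and the two given relations.
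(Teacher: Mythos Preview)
Your proof is correct and follows essentially the same route as the paper: induction on $k\ge 0$ for part~(1), then reduction of negative exponents to the nonnegative case via $w^{-1}(x)=x-y$, and for part~(2) applying part~(1) to $w'w$ with the vector $2y$, then hitting with $w$, and swapping roles for the remaining two identities.
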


\begin{proof}
{\it Part (1):}
We prove the equality $w^k (x) = x + k y$ for $k \ge 0$ with an easy induction on $k$.
Indeed, $w^0(x) = x = x+ 0\,y$ and, if $w^k (x) = x + ky$, then $w^{k+1} (x) = w(x + ky) = x + (k+1)y$.
On the other hand, $x = w^{-1} (x+y) = w^{-1} (x) +y$, hence $w^{-1} (x) = x -y$.
By applying the above induction to $x$, $-y$ and $w^{-1}$, we see that the equality $w^k (x) = x + ky$ is also true for $k < 0$.

{\it Part (2):}
We have $(w' w) (x) = w' (x' + y) = x + 2y$.
Thus we obtain the first equality by applying Part (1) to $x$, $2y$ and $w'w$.
Now, $w (w'w)^k(x) = w (x + 2ky) = x' + (2k+1)y$.
We obtain the last two equalities by exchanging the roles of $x$ and $x'$ and those of $w$ and $w'$.
\end{proof}

\begin{lem}\label{lem3_22}
If $(\Gamma, G)$ is one of the pairs of Case (v), Case (vi), Case (vii) and Case (viii) of Theorem \ref{thm2_12}, then $(\Gamma, G)$ has the $\hat \Phi^+$-basis property.
\end{lem}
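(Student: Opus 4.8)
By Proposition~\ref{prop3_3} it suffices to show that $\Phi=\Psi$, where $\Psi:=\bigcup_{s\in X}W^G\alpha_s$ for a suitably chosen finite $X\subseteq S$; note that $\Psi$ is stable under $W^G$. In all four cases $\Gamma$ is one of the untwisted simply laced affine graphs $\tilde A_{2m+1}$, $\tilde D_m$, $\tilde E_6$, so the canonical form $\langle.,.\rangle$ has a one-dimensional radical spanned by a vector $\delta\in V$ with coprime nonnegative integer coordinates; then $\langle\delta,v\rangle=0$ for all $v\in V$, $w(\delta)=\delta$ for all $w\in W$, and $g(\delta)=\delta$ for all $g\in G$. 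Writing $\Phi_0=\Phi_{S\setminus\{0\}}$ for the finite root subsystem spanned by the vertices distinct from $s_0$, one has $\Phi=\Phi_0+\Z\delta$ (the inclusion $\supseteq$ exactly as in the proof of Lemma~\ref{lem3_10}, the inclusion $\subseteq$ from the finite-type classification of norm-$2$ lattice vectors, or from Bourbaki~\cite{Bourb1}).

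First I would dispose of the finite part. In each of the four cases $G$ fixes $s_0$, so $G$ acts on $\Gamma_{S\setminus\{0\}}$, and one checks that $(\Gamma_{S\setminus\{0\}},G)$ is, after matching Bourbaki's numberings, precisely one of the pairs of Case (i), (ii), (iii), (iv) of Theorem~\ref{thm2_12}: deleting the affine node of $\tilde A_{2m+1}$, $\tilde D_m$, $\tilde D_4$, $\tilde E_6$ produces $A_{2m+1}$, $D_m$, $D_4$, $E_6$ equipped with the same symmetries. Since $(W_{S\setminus\{0\}})^G\subseteq W^G$, Lemma~\ref{lem3_20} together with Proposition~\ref{prop3_3}, applied to this restricted pair, furnishes a finite $X\subseteq S\setminus\{0\}$ with $\Phi_0\subseteq\Psi$.

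The remaining point is to climb the $\delta$-ladder: it is enough to prove that $\Psi$ is stable under $\gamma\mapsto\gamma\pm\delta$, for then $\Phi_0\subseteq\Psi$ and $\Phi=\Phi_0+\Z\delta$ give $\Phi\subseteq\Psi$, whence $\Phi=\Psi$. As $\Psi$ is $W^G$-stable and $w(\delta)=\delta$ for $w\in W^G$, it suffices to check that $\alpha_s+\delta\in\Psi$ and $\alpha_s-\delta\in\Psi$ for each $s\in X$; for that I would exhibit, for each $s\in X$, an element $w_s\in W^G$ and an element $g_s\in G$ (with $g_s=\mathrm{id}$ if $s$ is fixed by $G$) such that $w_s(\alpha_s)=g_s(\alpha_s)+\delta=\alpha_{g_s(s)}+\delta$. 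From $w_sg_s=g_sw_s$ and $g_s(\delta)=\delta$ one then gets $w_s(\alpha_{g_s^{-1}(s)})=\alpha_s+\delta$ and $w_s^{-1}(\alpha_{g_s(s)})=\alpha_s-\delta$; since $\alpha_{g_s^{\pm1}(s)}\in\Phi_0\subseteq\Psi$ and $\Psi$ is $W^G$-stable, both $\alpha_s+\delta$ and $\alpha_s-\delta$ lie in $\Psi$, and an immediate induction on $k$ gives $\alpha_s+k\delta\in\Psi$ for all $s\in X$ and all $k\in\Z$, hence the $\delta$-stability of $\Psi$. Lemma~\ref{lem3_21} is the clean device for organizing such iterations: applied with $x=\alpha_s$, $y=\delta$ and companion root $x'=-\alpha_s\in W^G\alpha_s$ (Lemma~\ref{lem3_19}(3)) or $x'=g_s(\alpha_s)$, it lets the elements $w_s$ and their conjugates ping-pong $x$ and $x'$ up the ladder while keeping the whole ladder inside a single $W^G$-orbit.

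The heart of the proof is the construction of the $w_s$: they should be short words in the generators $u_Y$ ($Y$ an orbit of $S$) of $W^G$, essentially the $G$-invariant $\delta$-translations of the affine Weyl group, and verifying $w_s(\alpha_s)=\alpha_{g_s(s)}+\delta$ reduces to a direct computation in the reflection representation using $\sigma_s(v)=v-\langle\alpha_s,v\rangle\alpha_s$, the tabulated values of the canonical form, and the explicit positive roots of Bourbaki~\cite{Bourb1}. I would carry this out case by case for Cases (v)--(viii); the two checks I expect to need separate attention are the value $m=1$ of Case (v) (where $\hat\Gamma=\tilde B_2$) and the case $G\cong\SSS_3$ of Case (vii), where one must additionally confirm that the chosen $w_s$ (or suitable conjugates of them) still lie in the smaller group $W^G$. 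This computational step, together with pinning down an explicit $X$ so that the finitely many orbits $W^G\alpha_s$, $s\in X$, genuinely exhaust $\Phi_0$ before the climb, is the main obstacle; everything else is formal given Lemma~\ref{lem3_19}, Lemma~\ref{lem3_20}, Lemma~\ref{lem3_21} and Proposition~\ref{prop3_3}.
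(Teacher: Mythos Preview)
Your overall strategy coincides with the paper's: reduce to $\Phi_0=\Phi_{S\setminus\{0\}}$ via Lemma~\ref{lem3_20}, use $\Phi=\Phi_0+\Z\delta$, and then climb the $\delta$-ladder by exhibiting suitable elements of $W^G$ and invoking Lemma~\ref{lem3_21}. The structural parts are fine.

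The gap is in your ansatz for the ladder step. You require, for each $s\in X$, an element $w_s\in W^G$ and a $g_s\in G$ with $w_s(\alpha_s)=\alpha_{g_s(s)}+\delta$; in particular, for a $G$-fixed vertex $s$ this forces $\alpha_s+\delta\in W^G\alpha_s$. In Case~(v) this fails for $s=m+1$. Concretely, for $\tilde A_3$ ($m=1$) one checks directly that the $W^G$-orbit of $\alpha_2$ contains $\alpha_2+2k\delta$ and $\alpha_0+(2k+1)\delta$ but never $\alpha_2+\delta$; in general $\alpha_{m+1}+\delta=(s_{m+1}u_{X_m}\cdots u_{X_1})(\alpha_0)\in W^G\alpha_0$, and $W^G\alpha_0\neq W^G\alpha_{m+1}$. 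Since $0$ and $m+1$ lie in distinct $G$-orbits, no choice of $g_s\in G$ rescues your scheme. The issue is not special to $m=1$; it occurs for every $m$ in Case~(v).

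The paper's fix is precisely to relax your ansatz: allow the $\delta$-shift of $\alpha_t$ to land in $W^G\alpha_0$ (rather than in $W^G\alpha_{g(t)}$), enlarge $X$ to $Y\cup\{0\}$, and then use the two-step alternation of Lemma~\ref{lem3_21}(2) with $x=\alpha_{m+1}$, $x'=\alpha_0$ (not $x'=-\alpha_{m+1}$ or $x'=g(\alpha_{m+1})$). This is exactly the content of the paper's Claim~2(2). Once you make this adjustment for Case~(v), the remaining Cases~(vi)--(viii) do fit your original scheme (Claim~2(1) in the paper), and the explicit $w_s$ are the short words in the $u_Y$'s that you anticipate.
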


\begin{proof}
We set $S_1 = S \setminus \{ 0 \}$, $\Gamma_1 = \Gamma_{S_1}$, $\B_1 = \B_{S_1}$, $W_1 = W_{S_1} = W (\B_1)$ and $\Phi_1 = \Phi_{S_1} = \Phi (\B_1)$.
Note that the elements of $G$ fix $0$ and leave invariant $S_1$.
We denote by $G_1$ the subgroup of $\Sym (\Gamma_1)$ induced by $G$.

Let $\beta$ be the greatest root of $\Phi_1$ and let $\delta = \alpha_0 + \beta$.
Recall that $\langle \alpha, \delta \rangle = 0$ for all $\alpha \in \Phi$ and $w (\delta) = \delta$ for all $w \in W$ (see the proof of Lemma \ref{lem3_10}).
Here are the values of $\delta$ according to $\Gamma$.
\begin{itemize}
\item
If $\Gamma = \tilde A_{2m+1}$ ($m \ge 1$), then $\delta = \alpha_0 + \alpha_1 + \alpha_2 + \cdots + \alpha_{2m+1}$.
\item
If $\Gamma = \tilde D_m$ ($m \ge 4$), then $\delta = \alpha_0 + \alpha_1 + 2 \alpha_2 + \cdots + 2 \alpha_{m-2} + \alpha_{m-1} + \alpha_m$.
\item
If $\Gamma = \tilde E_6$, then $\delta = \alpha_0 + \alpha_1 + 2 \alpha_2 + 2 \alpha_3 + 3 \alpha_4 + 2 \alpha_5 + \alpha_6$.
\end{itemize}
The following claim is proved in Kac \cite[Proposition 6.3\,(a)]{Kac1}.

{\it Claim 1.}
{\it We have $\Phi = \{ \alpha + k \delta \mid \alpha \in \Phi_1 \text{ and } k \in \Z \}$.}

As in the proof of Lemma \ref{lem3_20} it suffices to show that there exists a subset $X \subset S$ such that $\Phi = \cup_{s \in X} W^G \alpha_s$.
For that we will use the following.

{\it Claim 2.}
{\it Let $Y$ be a subset of $S_1$ such that $\Phi_1 = \cup_{s \in Y} W_1^{G_1} \alpha_s$.
\begin{itemize}
\item[(1)]
Suppose that $\alpha_s + \delta \in W^G \alpha_s$ for all $s \in Y$.
Then $\Phi = \cup_{s \in Y} W^G \alpha_s$.
\item[(2)]
Suppose that there exists $t \in Y$ such that $\alpha_s + \delta \in W^G \alpha_s$ for all $s \in Y \setminus \{ t\}$, $\alpha_t + \delta \in W^G \alpha_0$ and $\alpha_0 + \delta \in W^G \alpha_t$.
Then $\Phi = \cup_{s \in Y \cup \{0\}} W^G \alpha_s$.
\end{itemize}}

{\it Proof of Claim 2.}
By Claim 1 each element of $\Phi$ is written $\alpha + k \delta$ with $\alpha \in \Phi_1$ and $k \in \Z$.
By assumption there exist $s \in Y$ and $w_1 \in W_1^{G_1}$ such that $\alpha = w_1 (\alpha_s)$.
Since $\delta$ is fixed by the elements of $W$ we have $\alpha + k \delta = w_1 (\alpha_s) + k \delta = w_1 (\alpha_s + k \delta)$.
Under the assumptions of Part (1) there exists $w \in W^G$ such that $\alpha_s + \delta = w(\alpha_s)$.
It follows from Lemma \ref{lem3_21}\,(1) that $\alpha_s + k \delta = w^k (\alpha_s)$, hence $\alpha + k \delta = w_1 (\alpha_s + k \delta) = (w_1 w^k) (\alpha_s) \in W^G \alpha_s$.
Now we assume the hypothesis of Part (2). 
If $s \neq t$, then we show as in the proof of Part (1) that $\alpha + k \delta \in W^G \alpha_s$.
Suppose that $s = t$.
Then there exist $w, w' \in W^G$ such that $\alpha_t + \delta = w (\alpha_0)$ and $\alpha_0 + \delta = w' (\alpha_t)$.
It follows from Lemma \ref{lem3_21}\,(2) that $\alpha_t + k \delta$ lies in $W^G \alpha_t$ if $k$ is even and $\alpha_t + k \delta$ lies in $W^G \alpha_0$ if $k$ is odd. 
Now, $\alpha + k \delta = w_1 (\alpha_t + k \delta)$, hence $\alpha + k \delta \in W^G \alpha_t \cup W^G \alpha_0$.
This ends the proof of Claim 2.

The rest of the proof of Lemma \ref{lem3_22} is case by case.

{\it Case (v):}
We show that $\Phi = W^G \alpha_0 \cup W^G \alpha_m \cup W^G \alpha_{m+1}$.
The orbits of $S$ under the action of $G$ are $\{0\}, X_1 = \{1, 2m+1\}, \dots, X_m = \{m, m+2\}, \{m+1\}$.
Thus $W^G = \langle s_0, u_{X_1}, \dots, u_{X_m}, s_{m+1} \rangle$.
We also know that $\Phi_1 = W_1^{G_1} \alpha_m \cup W_1^{G_1} \alpha_{m+1}$ (see the proof of Lemma \ref{lem3_20}).
So, by Claim 2 it suffices to show that $\alpha_0 + \delta \in W^G \alpha_{m+1}$, $\alpha_{m+1} + \delta \in W^G \alpha_0$ and $\alpha_m +  \delta \in W^G \alpha_m$.
This follows from the following formulas. 
\begin{gather*}
\alpha_0 + \delta = (s_0 u_{X_1} \cdots u_{X_m}) (\alpha_{m+1}),\
\alpha_{m+1} + \delta = (s_{m+1} u_{X_m} \cdots u_{X_1}) (\alpha_0),\\
\alpha_m + \delta = (u_{X_m} \cdots u_{X_1} s_0 u_{X_1} \cdots u_{X_m} s_{m+1}) (\alpha_m)\,.
\end{gather*}

{\it Case (vi):}
We show that $\Phi = W^G \alpha_{m-2} \cup W^G \alpha_m$.
The orbits of $S$ under the action of $G$ are $\{0\}, \{1\}, \{2\}, \dots, \{m-2\}, X = \{m-1, m\}$.
Thus $W^G = \langle s_0, s_1, s_2, \dots, s_{m-2}, u_X \rangle$.
On the other hand, we know that $\Phi_1 = W_1^{G_1} \alpha_{m-2} \cup W_1^{G_1} \alpha_m$ (see the proof of Lemma \ref{lem3_20}).
So, by Claim 2 it suffices to show that $\alpha_{m-2} + \delta \in W^G \alpha_{m-2}$ and $\alpha_m + \delta \in W^G \alpha_m$.
This follows from the following formulas.
\begin{gather*}
\alpha_{m-2} + \delta = (s_{m-2} s_{m-3} \cdots s_2 s_0 u_X s_1 s_2 \cdots s_{m-3}) (\alpha_{m-2}),\\
\alpha_m + \delta = (u_X s_{m-2} \cdots s_2 s_0 u_X s_1 s_2 \cdots s_{m-2}) (\alpha_m)\,.
\end{gather*}

{\it Case (vii):}
The orbits of $S$ under the action of $G$ are $X = \{1,3,4 \}, \{2\}, \{ 0 \}$, hence $W^G = \langle u_X, s_2, s_0 \rangle$.
We show that $\Phi = \cup_{i=1}^4 W^G \alpha_i$.
We know that $\Phi_1 = \cup_{i=1}^4 W_1^{G_1} \alpha_i$ (see the proof of Lemma \ref{lem3_20}).
So, by Claim 2 it suffices to show that $\alpha_i + \delta \in W^G \alpha_i$ for all $i \in \{ 1,2,3,4 \}$.
We have $\alpha_1 + \delta = (u_X s_2 s_0 u_X s_2)(\alpha_1) \in W^G \alpha_1$.
Similarly, $\alpha_3 + \delta \in W^G \alpha_3$ and $\alpha_4 + \delta \in W^G \alpha_4$.
Finally, $\alpha_2 + \delta = (s_2 s_0 u_X) (\alpha_2) \in W^G \alpha_2$.

{\it Case (viii):}
We show that $\Phi = W^G \alpha_3 \cup W^G \alpha_4$.
The orbits of $S$ under the action of $G$ are $X = \{ 1, 6\}, Y = \{ 3, 5 \}, \{0\}, \{2\}, \{4\}$.
Thus $W^G = \langle u_X, u_Y, s_0, s_2, s_4 \rangle$.
On the other hand $\Phi_1 = W_1^{G_1} \alpha_3 \cup W_1^{G_1} \alpha_4$ (see the proof of Lemma \ref{lem3_20}).
So, by Claim 2 it suffices to show that $\alpha_3 + \delta \in W^G \alpha_3$ and $\alpha_4 + \delta \in W^G \alpha_4$.
We use the notations of the proof of Case (iv) of Lemma \ref{lem3_20}.
We check that $\alpha_3 + \delta = (u_Y s_4 s_2 s_0)(g (\delta_{12}))$.
Thus, since $g (\delta_{12}) \in W_1^{G_1} \alpha_3$, we have $\alpha_3 + \delta \in W^G \alpha_3$.
Similarly, we check that $\alpha_4 + \delta = (s_4 s_2 s_0) (\gamma_{10})$.
Thus, since $\gamma_{10} \in W_1^{G_1} \alpha_4$, we have $\alpha_4 + \delta \in W^G \alpha_4$.
\end{proof}

The last two cases of Theorem \ref{thm2_12}, those with locally spherical Coxeter graphs (Case (ix) and Case (x)), are easily deduced from Case (i) and Case (ii) as we will see next.

\begin{lem}\label{lem3_23}
If $(\Gamma, G)$ is one of the pairs of Case (ix) and Case (x) of Theorem \ref{thm2_12}, then $(\Gamma, G)$ has the $\hat \Phi^+$-basis property.
\end{lem}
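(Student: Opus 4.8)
The plan is to reduce everything to Proposition \ref{prop3_3}: a pair $(\Gamma,G)$ has the $\hat\Phi^+$-basis property if and only if every root $\alpha\in\Phi$ can be written $\alpha=w(\alpha_s)$ with $w\in W^G$ and $s\in S$. Since every root has finite support, the question localizes to finite full subgraphs, and among those one can always choose a $g$-stable one of type $A_{2m+1}$ (in Case (ix)) or $D_m$ (in Case (x)) — exactly the shapes already handled in Lemma \ref{lem3_20}. So the whole argument is: restrict to such a subgraph, apply the finite-type case, and lift the witness back.

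Concretely, fix $\alpha\in\Phi$; its support $\Supp(\alpha)$ is finite. In Case (ix), where $\Gamma={}_\infty A_\infty$ and $g(s_i)=s_{-i}$, pick $m\ge 1$ with $\Supp(\alpha)\subseteq X:=\{s_i\mid -m\le i\le m\}$; then $X$ is $g$-stable, $\Gamma_X$ is a path on $2m+1$ vertices, and relabelling the vertices of $\Gamma_X$ as $1,\dots,2m+1$ (via $i\mapsto i+m+1$) identifies $(\Gamma_X,G_X)$ with the pair $(A_{2m+1},\langle g\rangle)$ of Case (i) of Theorem \ref{thm2_12}. In Case (x), where $\Gamma=D_\infty$ and $g$ swaps $s_1,s_2$ and fixes the rest, pick $n\ge 4$ with $\Supp(\alpha)\subseteq X:=\{s_i\mid 1\le i\le n\}$; then $X$ is $g$-stable, $\Gamma_X$ is of type $D_n$, and reversing the numbering identifies $(\Gamma_X,G_X)$ with the pair $(D_n,\langle g\rangle)$ of Case (ii). In both cases Proposition \ref{prop2_3} gives that $\B_X$ is a reduced root basis with $W_X=W(\B_X)$ and $\Phi(\B_X)=\Phi\cap V_X$, so $\alpha\in\Phi(\B_X)$; since $(\Gamma_X,G_X)$ has the $\hat\Phi^+$-basis property by Lemma \ref{lem3_20}, Proposition \ref{prop3_3} applied inside $\Gamma_X$ yields $w\in W_X^{G_X}$ and $s\in X$ with $\alpha=w(\alpha_s)$ (the action of $W_X$ on $V_X$ being the restriction of that of $W$ on $V$).

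The one point deserving an actual word of proof — and the nearest thing to an obstacle in what is otherwise pure bookkeeping — is that this $w$ lies in $W^G$ and not merely in $W_X^{G_X}$. This holds because $X$ is $g$-stable: for $u\in W_X$ the element $g(u)$ lies in $W_{g(X)}=W_X$ and coincides with the image of $u$ under the symmetry of $\Gamma_X$ induced by $g$; since $G_X$ (and, here, $G=\langle g\rangle$ itself) is generated by that symmetry, invariance under $G_X$ is the same as invariance under $G$, whence $g(w)=w$. Therefore $\alpha=w(\alpha_s)$ with $w\in W^G$ and $s\in S$, and Proposition \ref{prop3_3} gives the conclusion. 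A little extra care is needed only to ensure the low-rank identifications cause no trouble (e.g.\ avoiding $A_1$, or $D_3=A_3$), which is why $m\ge 1$ and $n\ge 4$ were imposed above.
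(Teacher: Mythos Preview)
Your proposal is correct and follows essentially the same approach as the paper's proof: reduce via Proposition \ref{prop3_3}, use finite support to land in a $g$-stable finite full subgraph of type $A_{2m+1}$ or $D_n$, and invoke Lemma \ref{lem3_20}. The paper states the inclusion $W_n^{G_n}\subset W^G$ without comment, whereas you spell out why it holds; your added remarks on relabelling and on the low-rank constraints $m\ge 1$, $n\ge 4$ are harmless expository refinements rather than a different argument.
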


\begin{proof}
As ever, it suffices to show that for each $\alpha \in \Phi$ there exist $s \in S$ and $w \in W^G$ such that $\alpha = w (\alpha_s)$.

{\it Case (ix):}
Let $n \ge 1$.
We set $S_n = \{ -n, \dots, -1, 0 , 1, \dots, n \}$, $\Gamma_n = \Gamma_{S_n}$, $\B_n = \B_{S_n}$, $W_n = W_{S_n} = W (\B_n)$, $\Phi_n = \Phi_{S_n} = \Phi (\B_n)$ and $\Pi_n = \{ \alpha_s \mid s \in S_n\}$.
We denote by $V_n$ the vector subspace of $V$ spanned by $\Pi_n$.
Recall that $\Phi_n = V_n \cap \Phi$ (see Proposition \ref{prop2_3}). 
Note that the elements of $G$ leave invariant the set $S_n$.
We denote by $G_n$ the subgroup of $\Sym (\Gamma_n)$ induced by $G$.
Let $\alpha \in \Phi$.
Since $\alpha$ has finite support, there exists $n \ge 1$ such that $\alpha \in V_n \cap \Phi = \Phi_n$.
Then by Lemma \ref{lem3_20} there exist $s \in S_n \subset S$ and $w \in W_n^{G_n} \subset W^G$ such that $\alpha = w (\alpha_s)$.

{\it Case (x):}
We argue as in Case (ix).
Let $n \ge 4$.
We set $S_n = \{1,2, \dots, n \}$, $\Gamma_n = \Gamma_{S_n}$, $\B_n = \B_{S_n}$, $W_n = W_{S_n} = W (\B_n)$, $\Phi_n = \Phi_{S_n} = \Phi (\B_n)$ and $\Pi_n = \{ \alpha_s \mid s \in S_n\}$, and we denote by $V_n$ the vector subspace of $V$ spanned by $\Pi_n$.
The elements of $G$ leave invariant the set $S_n$, and we denote by $G_n$ the subgroup of $\Sym (\Gamma_n)$ induced by $G$.
Let $\alpha \in \Phi$.
Since $\alpha$ has finite support, there exists $n \ge 4$ such that $\alpha \in V_n \cap \Phi = \Phi_n$.
Then by Lemma \ref{lem3_20} there exist $s \in S_n \subset S$ and $w \in W_n^{G_n} \subset W^G$ such that $\alpha = w (\alpha_s)$.
\end{proof}

\begin{proof}[ Proof of Theorem \ref{thm2_12}]
It directly follows from Lemma \ref{lem3_18}, Lemma \ref{lem3_20}, Lemma \ref{lem3_22} and Lemma \ref{lem3_23}.
\end{proof}



\end{document}